\newcommand{\cal}[1]{\mathcal{#1}}
\theoremstyle{plain}
\newtheorem{theorem}{Theorem}
\newtheorem{lemma}{Lemma}[section]
\newtheorem{theo}[lemma]{Theorem}
\newtheorem{proposition}[lemma]{Proposition}
\newtheorem{corollary}[lemma]{Corollary}
\theoremstyle{definition}
\newtheorem{definition}[lemma]{Definition}
\newtheorem{example}[lemma]{Example}
\let\egthree=\phi
\let\phi=\varphi
\let\varphi=\egthree
\newcounter{sebcomments}
\begin{document}
\title{Submanifold projections and hyperbolicity in ${\rm Out}(F_n)$}
\author{Ursula Hamenst\"adt and Sebastian Hensel}
\thanks
{AMS subject classification: 20M34\\
Partially supported by the
Hausdorff Center Bonn}
\date{March 21, 2024}

\begin{abstract}
The free splitting graph of a free group $F_n$ with 
$n\geq 2$ generators is a hyperbolic ${\rm Out}(F_n)$-graph which 
has a geometric realization as a sphere graph in the connected 
sum of $n$ copies of $S^1\times S^2$. We use this realization to construct
submanifold projections of the free splitting graph into the free splitting
graphs of proper free factors. This is used to construct for $n\geq 3$
a new hyperbolic ${\rm Out}(F_n)$-graph. If $n=3$, then 
every exponentially growing 
element acts on this graph with positive translation length.
\end{abstract}

\maketitle


\section{Introduction}


The \emph{free factor graph} ${\cal F\cal F}(F_n)$ 
for a free group $F_n$ of rank $n\geq 2$  is the graph whose
vertices are conjugacy classes of free factors of $F_n$ and where 
two such free factors $A_1,A_2$ are connected by an edge of length one if 
up to a global conjugation we have $A_1\subset  A_2$ or $A_2\subset A_1$.
The free factor graph is a locally infinite 
 Gromov hyperbolic geodesic metric graph, and the 
outer automorphism group ${\rm Out}(F_n)$ of $F_n$ acts as a group of 
simplicial automorphisms on ${\cal F\cal F}(F_n)$ \cite{BF14}.


There are other natural Gromov hyperbolic geodesic metric 
${\rm Out}(F_n)$-graphs. The best known is the 
so-called \emph{free splitting graph} 
\cite{HM13}, 
whose first barycentric subdivision ${\cal F\cal S}(F_n)$ 
is defined as follows. 
The vertices of  ${\cal F\cal S}(F_n)$ are
graph of groups decompositions of $F_n$ with 
trivial edge groups. Two such graph of groups
decompositions $G,G^\prime$ are connected
by an edge of length one if $G^\prime$ either
is a collapse or a blow-up of $G$.

In view of the geometric understanding of the mapping class
group of a closed surface $S$ of genus at least $2$ via its 
action on the curve graph of $S$ and the curve graph of 
subsurfaces using subsurface projections,
the graph ${\cal F\cal S}(F_n)$ 
is significant for the geometric 
understanding of ${\rm Out}(F_n)$. However, much less is known about 
${\cal F\cal S}(F_n)$ than about  
the free factor graph,
and the action of ${\rm Out}(F_n)$ 
is more complicated. 
For example, it was observed in 
\cite{HM19} that for sufficiently large $n$ 
there are free abelian subgroups of 
${\rm Out}(F_n)$ which act by loxodromic
isometries on ${\cal F\cal S}(F_n)$, with the same pair of fixed points on 
the Gromov boundary of ${\cal F\cal S}(F_n)$.

In spite of this difficulty, it turns out that there is hyperbolicity in 
${\rm Out}(F_n)$ beyond the free splitting graph. This is clear for 
$n=2$ since ${\rm Out}(F_2)={\rm GL}(2,\mathbb{Z})$ is a hyperbolic group. 
The following is our main result.

\begin{theorem}\label{main}
For $n\geq 3$ there exists a hyperbolic geodesic metric ${\rm Out}(F_n)$-graph 
${\cal P\cal G}_n$ which admits 
an equivariant one-Lipschitz projection onto the free splitting graph.
If $n=3$ then every exponentially growing 
automorphism acts with positive translation length on ${\cal P\cal G}_n$. 
\end{theorem}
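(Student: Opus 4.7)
My plan is to construct ${\cal P\cal G}_n$ as an enrichment of the free splitting graph ${\cal F\cal S}(F_n)$ using the submanifold projections $\pi_A\colon {\cal F\cal S}(F_n)\to{\cal F\cal S}(A)$ onto the free splitting graphs of proper free factors $A$ built earlier from the sphere-system model. A natural candidate is the graph with vertex set ${\cal F\cal S}(F_n)$ in which two vertices $x,y$ are joined by an edge of ${\cal P\cal G}_n$ only when they are adjacent in ${\cal F\cal S}(F_n)$ and, in addition, $d_{{\cal F\cal S}(A)}(\pi_A(x),\pi_A(y))\le C$ for a uniform constant $C$ and every proper free factor $A$ to which both splittings project. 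Forgetting this extra condition yields the required equivariant $1$-Lipschitz projection onto ${\cal F\cal S}(F_n)$.

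For hyperbolicity I would apply Bowditch's guessing-geodesics criterion. For each pair $x,y$ I would single out a preferred path by concatenating a geodesic in ${\cal F\cal S}(F_n)$ from $x$ to $y$ with detours through the subgraphs adapted to each proper factor $A$ for which $d_{{\cal F\cal S}(A)}(\pi_A(x),\pi_A(y))$ exceeds a fixed threshold, and then verify the slim-triangle and coarse-uniqueness axioms for this family. This reduces to two standard ingredients, to be established in the sphere model: a Behrstock-type inequality, stating that if the projections to two distinct proper factors are simultaneously large then one factor lies close to the image of the other; and a bounded geodesic image property, stating that any ${\cal F\cal S}(F_n)$-geodesic whose $\pi_A$-image has large diameter must enter a bounded neighbourhood of the subgraph adapted to $A$. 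I expect the second of these, as in the mapping class group analog, to be the main technical obstacle, and expect it to follow from the surgery calculus on normal sphere systems used to define $\pi_A$.

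For the positive translation length statement when $n=3$ I would invoke the tight classification of exponentially growing elements of ${\rm Out}(F_3)$: each such $\phi$ is either fully irreducible, or preserves (up to a finite power) a proper free factor $A$ of rank one or two on which it restricts to an exponentially growing automorphism. If $\phi$ is fully irreducible, then by Bestvina--Feighn it already acts with positive translation length on ${\cal F\cal S}(F_3)$, and the $1$-Lipschitz projection yields $\ell_{{\cal P\cal G}_3}(\phi)\ge \ell_{{\cal F\cal S}}(\phi)>0$. Otherwise $\phi$ may act elliptically on ${\cal F\cal S}(F_3)$ and positive translation length has to come from the extra structure: one shows that for any vertex $x$ the projections $\pi_A(\phi^k x)$ drift linearly in $k$ inside ${\cal F\cal S}(A)$, and then linear growth of $d_{{\cal P\cal G}_3}(x,\phi^k x)$ follows directly from the construction. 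The rank-three hypothesis enters precisely here, to guarantee the existence of such an invariant factor on which $\phi$ continues to grow exponentially.
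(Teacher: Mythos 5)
Your high-level strategy---refine a hyperbolic ${\rm Out}(F_n)$-graph by submanifold projections and reduce hyperbolicity to a Behrstock-type inequality plus a bounded geodesic image theorem---correctly identifies the two technical pillars of the paper (Theorem \ref{thm:bounded-geodesic-projection} and Proposition \ref{projection}). But your construction differs from the paper's and has genuine gaps. First, you condition edges on the projections to \emph{every} proper free factor. The paper's projections are defined only for corank-one factors (non-separating spheres), and only those enter the construction. For the full family of factors the required transversality fails: for nested factors $A_2<A_1$ both projections of a pair of splittings can be simultaneously large with no ordering constraint, so axiom (1) of Theorem \ref{BBF} (and any Bowditch-style thinness argument built on it) breaks down; recording all projections at best produces a hierarchically hyperbolic structure, not a hyperbolic graph. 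Second, even restricted to corank-one factors, your edge-deleted graph is not obviously connected or hyperbolic, and your final step---that linear drift of the single projection $\pi_A(\phi^kx)$ forces linear growth of $d_{{\cal P\cal G}_3}(x,\phi^kx)$ ``directly from the construction''---is precisely the missing lower bound: one must show that the set of vertices adapted to $A$ is quasi-isometrically embedded. The paper sidesteps both problems by taking vertices to be nested pairs $A_1>A_2$ of corank-one and corank-two factors (equivalently, ordered pairs of disjoint non-separating spheres), so that ${\cal P\cal G}_n$ fibers over the non-separating sphere graph with fibers $H(S)\cong{\cal Y}(S)$; hyperbolicity and uniform quasi-convexity of the fibers then follow from the electrification criterion of \cite{H16} once the bounded penetration property is checked via Corollary \ref{quasigeobounded}.

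Your classification of exponentially growing elements of ${\rm Out}(F_3)$ is also not exhaustive as stated: there exist reducible, exponentially growing automorphisms that fix a rank-one free factor and whose restriction to every invariant proper factor grows polynomially, yet are loxodromic on the free splitting graph---see Example \ref{ex:rank3}. The dichotomy actually used, quoted from \cite{HM19}, is that some power of $\phi$ is either loxodromic on the free splitting graph itself (where your one-Lipschitz projection argument applies) or preserves a corank-one factor and is loxodromic on \emph{its} free splitting graph, which in rank two is the Farey graph; combined with quasi-convexity of the fibers $H(S)$ this yields the translation length statement as in Corollary \ref{rank3} and Theorem \ref{rank3sharp}. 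You should replace your dichotomy by this one and supply the quasi-convexity argument before the last step can be considered complete.
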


Although for $n\geq 4$ the graph ${\cal P\cal G}_n$ does not have the property that
every exponentially growing automorphism acts on it with positive translation length,
we conjecture that such a hyperbolic ${\rm Out}(F_n)$-graph exists for all $n$.

Theorem \ref{main} can be thought of as a strengthening in rank $3$ 
of the 
following main result of \cite{BF14b}.

\begin{theorem}[Theorem 5.1 of \cite{BF14b}]\label{main1}
The group ${\rm Out}(F_n)$ acts by isometries on a product 
$Y=Y_1\times \cdots \times Y_{k}$ of $k>n$ hyperbolic spaces so that every exponentially growing
automorphism has positive translation length.  
\end{theorem}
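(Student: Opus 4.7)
The plan is to parameterize exponentially growing automorphisms by the rank of the smallest free factor system carrying an attracting lamination, to build one hyperbolic ${\rm Out}(F_n)$-graph for each possible rank, and to take the product. To each exponentially growing $\phi\in{\rm Out}(F_n)$ attach $r(\phi)\in\{2,\dots,n\}$: by Bestvina--Feighn--Handel, $\phi$ has a nonempty set of attracting laminations, each carried by a minimal $\phi$-invariant free factor system, and $r(\phi)$ is the maximum rank of a component. When $r(\phi)=n$ the element $\phi$ is fully irreducible and already has positive translation length on ${\cal F\cal F}(F_n)$ by \cite{BF14}, so it suffices to treat the case $r<n$.

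The main technical step is to construct, for every proper free factor $A$ of $F_n$ of rank $r\geq 2$, a coarsely well-defined projection $\pi_A$ from the free splitting graph ${\cal F\cal S}(F_n)$ to the free factor graph ${\cal F\cal F}(A)$. The natural candidate intersects a given Bass--Serre tree with a Stallings-graph representative of $A$ and records the induced free factors of $A$. The goal is to establish two estimates: a bounded geodesic image property, asserting that geodesics in ${\cal F\cal F}(F_n)$ staying far from $[A]$ have bounded image under $\pi_A$; and a Behrstock-type inequality, asserting that for distinct rank-$r$ factors $A,B$ either $\pi_A(B)$ or $\pi_B(A)$ is small.

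Given these projections, for each $2\leq r<n$ build a hyperbolic ${\rm Out}(F_n)$-graph $Y_r$ by combining the hyperbolic factor graphs ${\cal F\cal F}(A)$ over all rank-$r$ factors $A$, using a Bowditch- or Kapovich--Rafi-style combination that invokes the Behrstock inequality and bounded geodesic image. An equivalent description cones off, inside ${\cal F\cal S}(F_n)$, those free splittings with uniformly small $\pi_A$ for every rank-$r$ factor $A$. If $r(\phi)=r$, some power of $\phi$ preserves a rank-$r$ factor $A$ whose factor graph carries one of $\phi$'s laminations; the restriction $\phi|_A$ is fully irreducible in ${\rm Out}(A)$, so by \cite{BF14} applied to $A$ it acts with positive translation length on ${\cal F\cal F}(A)$, and this transfers via $\pi_A$ to positive translation length on $Y_r$.

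Finally take the product of the $Y_r$ together with $Y_n={\cal F\cal F}(F_n)$; this gives $n-1$ factors, and to reach $k>n$ one may simply duplicate some $Y_r$ or include other existing hyperbolic ${\rm Out}(F_n)$-graphs (such as the free splitting graph itself) as extra factors. The main obstacle is the construction of the subfactor projection $\pi_A$ and the verification of the Behrstock and bounded-geodesic-image properties: unlike in the mapping class group setting, there is no canonical ``subfactor submanifold'' restriction, so delicate choices from Outer space or from Stallings-fold sequences are needed to obtain coarse well-definedness and the required inequalities. Once these are in place, hyperbolicity of each $Y_r$ and loxodromicity of the relevant elements follow from the now-standard combinatorial hyperbolicity criteria for electrified or combined complexes.
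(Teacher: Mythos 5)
This theorem is quoted from \cite{BF14b} and is not reproved in the present paper, so the relevant comparison is with the argument of that source; your outline is essentially its strategy (subfactor projections satisfying a Behrstock inequality and a finiteness axiom, assembled via the projection--complex machinery of \cite{BBF15}, i.e.\ Theorem \ref{BBF}, together with a loxodromicity criterion on free factor graphs of proper factors). Within that strategy there is, however, one concrete gap. Grouping the proper free factors purely by rank $r$ does not produce collections to which Theorem \ref{BBF} applies: two distinct free factors of the same rank can be \emph{disjoint} --- for instance $\langle a,b\rangle$ and $\langle c,d\rangle$ in $F_4$ --- and for such a pair neither $\pi_A(B)$ nor $\pi_B(A)$ is defined (there is nothing to intersect and record), so the axioms of Theorem \ref{BBF} cannot even be formulated for the full rank-$r$ family. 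The construction in \cite{BF14b} therefore first partitions the free factors into finitely many ${\rm Out}(F_n)$-invariant colour classes with the property that any two distinct members of a class \emph{overlap} (neither contains the other, and they are not vertex groups of a common free splitting); one projection complex is built per colour, and the bound $k>n$ is the number of colours plus one (for $\mathcal{F}\mathcal{F}(F_n)$ itself). It is an output of the colouring argument, not something to be arranged afterwards by duplicating factors, as your last paragraph suggests.

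Two smaller inaccuracies in the same direction: an exponentially growing $\phi$ whose attracting lamination is not carried by any proper free factor need not be fully irreducible, and likewise the restriction $\phi|_A$ to the minimal carrier $A$ of an attracting lamination need not be fully irreducible in ${\rm Out}(A)$. What is true, and what \cite{BF14b} actually uses, is that the lamination \emph{fills} its minimal carrier, and it is the filling-lamination criterion for loxodromicity on the free factor graph (rather than full irreducibility and \cite{BF14} alone) that yields positive translation length on the corresponding factor of the product.
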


While the proof of Theorem \ref{main1} uses the free factor graph and the 
action of
${\rm Out}(F_n)$ on Outer space as the main tool, 
we use 
a more topological viewpoint based on 
the 
so-called \emph{sphere system graph} \cite{HV96} which is defined 
as follows.

Let $M=S^1\times S^2\sharp \dots\sharp S^1\times S^2$ be the connected sum of 
$n$ copies of $S^1\times S^2$. Then $M$ is a closed manifold whose fundamental
group equals the free group $F_n$ with $n$ generators.

 A \emph{sphere} in $M$ is an embedded sphere which is 
not homotopic to 
zero. A \emph{sphere system} is a collection of pairwise
disjoint not mutually homotopic spheres in $M$. 
The sphere system is called \emph{simple} if
it decomposes $M$ into a union of balls.

Denote by ${\cal S\cal S\cal G}_n$ 
the locally finite 
graph whose vertices are isotopy classes of 
simple sphere systems in $M$ 
and where two such simple sphere  systems
are connected by an edge of length one if they can be realized disjointly. 
The group ${\rm Out}(F_n)$ acts on the graph ${\cal S\cal S\cal G}_n$
properly and cocompactly by work of Laudenbach \cite{Laudenbach}. Thus ${\cal S\cal S\cal G}_n$ is a geometric model for
${\rm Out}(F_n)$. 

Any sphere in $M$ defines up to conjugation a one-edge
free splitting of $F_n$, that is, a vertex in
${\cal F\cal S}(F_n)$, and 
two disjoint spheres $S_1,S_2$ define a two-edge free splitting which
collapses to the free splittings defined by $S_1,S_2$, that is, they
define an edge in ${\cal F\cal S}(F_n)$.
Thus the \emph{sphere graph} ${\cal S\cal G}_n$ whose set of vertices
is the set of isotopy classes of spheres in $M$ and whose edges connect spheres which
can be realized disjointly is a topological model for the free splitting graph. 
There also is a natural coarsely well defined coarsely ${\rm Out}(F_n)$-equivariant two-Lipschitz
projection
\[\Theta:{\cal S\cal S\cal G}_n\to {\cal S\cal G}_n\]
which associates to a simple sphere 
system one of its components.

As for Outer space, there are distinguished paths in
${\cal S\cal S\cal G}_n$ connecting 
any two simple spheres systems as follows. 
Let $S$ be a sphere which 
intersects the simple sphere system $\Sigma$. Assume that $S$ is in minimal
position with respect to $\Sigma$; this implies that $S$ intersects $\Sigma$ in the 
minimal number of components, and each of these components is an embedded circle in 
$S$  (see \cite{HiHo17} for a detailed account on these facts).

An innermost such  circle 
bounds an embedded disk $D$ in $S-\Sigma$. 
Its boundary $\partial D$ is contained in a sphere
$S_0\in \Sigma$. Replace $S_0$ by
the spheres obtained by
gluing $D$ to each of the two components of $S_0-D$.
These  spheres are disjoint from $\Sigma$. 
By Lemma 3.1 of \cite{HV96}, the sphere system 
$\Sigma_1$, obtained from the union of these two spheres with
$\Sigma-S_0$ 
by removing parallel copies of the same sphere if there are any, is simple,
and it has fewer intersections with $S$ than $\Sigma$. 
We call $\Sigma_1$
a \emph{sphere system obtained by surgery} of 
$\Sigma$ along $S$. Note that this notion is also defined if $S$ is a component of a sphere system
$\Sigma^\prime$. 

Repetition of this construction gives rise to so-called \emph{surgery sequences}
which are distinguished paths in ${\cal S\cal S\cal G}_n$. It was shown in 
\cite{HiHo17} that there exists a number $L>1$ such that 
the image by the map $\Theta$ 
of such a path is an \emph{unparameterized 
$L$-quasi-geodesic} in ${\cal S\cal G}_n$: 
there exists an increasing 
homeomorphism $\rho:[a,b]\to [0,m]$ such that the path 
$\Theta \circ \rho$ is an $L$-quasi-geodesic, that is, it satisfies
\[d_{\cal S\cal G}(\Theta \circ \rho(s),\Theta \circ \rho(t))/L-L
\leq \vert s-t\vert \leq  Ld_{\cal S\cal G}(\Theta \circ \rho(s),
\Theta \circ \rho(t))+L\]
where $d_{\cal S\cal G}$ denotes the distance in the sphere graph.

We use this fact to control \emph{submanifold projections} of 
the sphere graph into the sphere graphs of manifolds 
$M(\sigma)$, obtained by cutting $M$ open along a \emph{non-separating} sphere 
$\sigma$ and filling in the boundary by 
attachig a ball to each boundary component. These submanifold projections are
defined as follows.

Let $\sigma\subset M$ be a non--separating sphere.
The manifold $M(\sigma)$ is homeomorphic to 
the product of $n-1$ copies of $S^1\times S^2$. Given a non-separatring sphere $S\subset M$ distinct
from $\sigma$, we define the \emph{projection} $p_{M(\sigma)}(S)\subset M(\sigma)$ 
of $S$ into $M(\sigma)$ as follows. 
If $S\subset M-\sigma$ then put $p_{M(\sigma)}(S)=S\subset M(\sigma)$. 
This is well defined as since $S$ is non-separating, it is essential 
as a sphere in $M(\sigma)$. 
If $S$ intersects $\sigma$, then
choose an innermost disk $D\subset S$ with boundary on $\sigma$ and define 
$p_{M(\sigma)}(S)$ to be the sphere in $M(\sigma)$ which is the union of $D$ with one of the two components of 
$\sigma-D$. We observe in Section \ref{subfactor} that this is indeed an essential sphere in 
$M(\sigma)$. Furthermore, it determines a point in the sphere graph of
$M(\sigma)$ which coarsely does not depend on choices. This projection extends to separating
spheres in the same way, with the exception of separating spheres disjoint from $\sigma$
which are inessential as spheres in $M(\sigma)$.
We use this projection and its geometric properties as 
the main tool for the construction of the graph 
${\cal P\cal S}_n$. 

In \cite{BF14b}, a notion of subsurface projection of a free factor into the free splitting complex
of another free factor is defined. Although this projection should be closely related to 
ours, the precise relation between these two constructions is unclear. The article \cite{SS12}
contains yet another approach.

The outline of this article is as follows. In Section \ref{sec:graphs}, we define a family of 
${\rm Out}(F_n)$-graphs 
and show that they interpolate between the free factor graph and the free splitting graph. 
We also show that these graphs are all hyperbolic. 

In Section \ref{uniform} we introduce the concept of \emph{exponential growth} 
for surgery sequences in the 
simple sphere system graph. We show that surgery sequences of exponential growth are
quasi-geodesics. Furthermore, a surgery sequence which projects to a parameterized 
quasi-geodesic in the sphere graph has exponential growth. However, this is not necessary for 
exponential growth. 

In Section \ref{sec:spontaneous} we give a detailed analysis of the case $n=2$. We show that
in this case, exponential growth of a surgery sequence is equivalent to stating that its
projection to the sphere graph is a parameterized quasi-geodesic. For $n\geq 3$  we also construct
surgery sequences which do not define quasi-geodesics  in the sphere system graph.

Section \ref{subfactor} is devoted to the construction 
of submanifold projections. Most importantly, we 
show the bounded geodesic image property which is an essential tool towards
the proof of Theorem \ref{main}.
The proof of Theorem \ref{main} is 
contained in Section \ref{sec:products}.

\section{Graphs of free factors}\label{sec:graphs}

 In this section we introduce a family of graphs which interpolate between 
 the free factor graph and the free splitting graph. We assume that 
 $n\geq 3$ throughout.

\begin{definition}\label{maxfree}
For $m\leq n-2$, 
the \emph{level $m$  free factor graph} is the graph ${\cal F\cal F}_m(F_n)$
whose vertices are conjugacy classes of free factors of rank $n-1$, 
and where two such free factors $A_1,A_2$ are connected by an edge of 
length one if up to a global conjugation, $A_1\cap A_2$ contains a free factor of 
rank $m$.
\end{definition}

Clearly the graphs ${\cal F\cal F}_m(F_n)$ are geodesic ${\rm Out}(F_n)$-graphs. 
Furthermore, they all have the same set of vertices, and for each $m\geq 2$ the 
vertex inclusion defines an embedding ${\cal F\cal F}_m(F_n)\to 
{\cal F\cal F}_{m-1}(F_n)$. In other words, ${\cal F\cal F}_m(F_n)$ is obtained 
from ${\cal F\cal F}_{m-1}(F_n)$ by deleting some edges. The next proposition
justifies the terminology.

\begin{proposition}\label{freefactor}
The vertex inclusion defines a $2$-quasi-isometry
\[{\cal F\cal F}_1(F_n)\to {\cal F\cal F}(F_n).\]
\end{proposition}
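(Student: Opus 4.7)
The plan is to verify the three standard defining conditions of a $(2,0)$-quasi-isometry for the inclusion $\iota\colon {\cal F\cal F}_1(F_n)\to {\cal F\cal F}(F_n)$. For coarse density, any proper free factor $B\subset F_n$ of rank $k\in\{1,\dots,n-1\}$ has a basis that extends to a basis $b_1,\dots,b_n$ of $F_n$. Taking $A=\langle b_1,\dots,b_{n-1}\rangle\supseteq B=\langle b_1,\dots,b_k\rangle$ then produces a rank-$(n-1)$ free factor containing $B$, so every vertex is at ${\cal F\cal F}$-distance at most $1$ from the image of $\iota$ (distance $0$ when $k=n-1$).

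For the multiplicative upper bound it suffices to work edge by edge. If two rank-$(n-1)$ free factors $A_1,A_2$ are joined by an edge of ${\cal F\cal F}_1(F_n)$, then after a suitable global conjugation their intersection contains a rank-$1$ free factor $C$ of $F_n$, and the path $A_1 - C - A_2$ has length $2$ in ${\cal F\cal F}(F_n)$. Concatenation along a ${\cal F\cal F}_1$-geodesic then yields $d_{\cal F\cal F}(A_1,A_2)\leq 2\,d_{\cal F\cal F_1}(A_1,A_2)$.

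The main content is the inverse inequality $d_{\cal F\cal F_1}(A_1,A_2)\leq d_{\cal F\cal F}(A_1,A_2)$ for rank-$(n-1)$ free factors $A_1,A_2$. Given a geodesic $A_1=B_0,B_1,\dots,B_d=A_2$ in ${\cal F\cal F}(F_n)$, I would inductively choose representatives of each conjugacy class so that consecutive $B_i$'s are literally nested, and then extend each intermediate $B_i$ to a rank-$(n-1)$ free factor $A_i\supseteq B_i$ by the basis-extension construction above, setting $A_0=A_1$ and declaring $A_d$ to be the appropriate conjugate of $A_2$. At each step, whichever of $B_i,B_{i+1}$ is smaller lies in both $A_i$ and $A_{i+1}$, hence in $A_i\cap A_{i+1}$; since this smaller subgroup is itself a free factor of $F_n$ of rank $\geq 1$, any one of its basis elements generates a rank-$1$ free factor of $F_n$ sitting inside the intersection, so $A_i$ and $A_{i+1}$ are ${\cal F\cal F}_1$-adjacent. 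This produces a ${\cal F\cal F}_1$-edge-path of length $d$ between the given conjugacy classes.

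The one point requiring real care is the interaction between the two ``up to global conjugation'' conventions: an ${\cal F\cal F}$-geodesic only gives nesting of conjugates, while the edge condition in ${\cal F\cal F}_1(F_n)$ is also up to conjugation. I would handle this by fixing a representative of $B_0$ and successively conjugating later vertices to achieve genuine nesting, observing that conjugating the tail of the geodesic does not disturb earlier nesting relations; the output is an edge-path between the correct conjugacy classes. Beyond this bookkeeping, the argument uses only elementary free factor manipulations, with no appeal to the deeper hyperbolic geometry of ${\cal F\cal F}(F_n)$.
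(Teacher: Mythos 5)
Your proposal is correct and follows essentially the same route as the paper: coarse density via extending a free factor to a corank-one one, the upper bound by subdividing each ${\cal F\cal F}_1$-edge through a rank-one factor in the intersection, and the lower bound by promoting the vertices of an ${\cal F\cal F}$-geodesic to corank-one factors whose consecutive intersections contain a rank-one free factor. The only (harmless) difference is that the paper first notes a geodesic in ${\cal F\cal F}(F_n)$ must alternate between inclusions and reverse inclusions and then only promotes the ``peak'' vertices, obtaining a path of half the length, whereas you promote every vertex and get a path of the same length $d$; both bounds suffice for a $2$-quasi-isometry.
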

\begin{proof}
Since every vertex of ${\cal F\cal F}(F_n)$ is of distance one to 
a rank $n-1$ free factor, the image of the vertex inclusion 
${\cal F\cal F}_1(F_n)\to {\cal F\cal F}(F_n)$ 
is coarsely dense
in ${\cal F\cal F}(F_n)$. Furthermore, by construction, 
any edge path $(A_i)_{0\leq i\leq k}\subset {\cal F\cal F}_1(F_n)$ of length $k$ 
induces (non-uniquely)  an edge path
in ${\cal F\cal F}(F_n)$ of length $2k$ with the same endpoints 
by replacing an edge 
$(A_i,A_{i+1})$ in ${\cal F\cal F}_1(F_n)$ 
by an edge path $(A_i,B_i,A_{i+1})$ in ${\cal F\cal F}(F_n)$ 
of length two, where $B_i$ is a free factor contained in the intersection
 $A_i\cap A_{i+1}$ which exists by the definition of ${\cal F\cal F}_1(F_n)$.

Thus it suffices to show the following.
Let $A,B$ be corank one free factors and let $(A_i)$ be a geodesic in 
the free factor graph ${\cal F\cal F}(F_n)$ 
connecting $A$ to $B$. Then there exists a path 
$(A_i^\prime)$ in ${\cal F\cal F}_1(F_n)$ connecting $A$ to $B$ whose length  
does not exceed the length of the path $(A_i)$.

To show that this is the case, note first that if $(A_j,A_{j+1},A_{j+2})\subset 
{\cal F\cal F}(F_n)$ is an edge path of length $2$ and if 
we have $A_j\subset A_{j+1}\subset A_{j+2}$, then 
$A_j,A_{j+2}$ are connected by an edge in ${\cal F\cal F}(F_n)$ 
and hence $(A_j,A_{j+1},A_{j+2})$ is not
a subarc of any geodesic in ${\cal F\cal F}(F_n)$. Thus we may assume 
that for all $i$, we have $A_{2i-1}\subset A_{2i}\supset A_{2i+1}$.

Then for each $i$, we may replace $A_{2i}$ by a corank 1 free factor $A_{2i}^\prime$ containing $A_{2i}$.
Since $A_{2i-1}\subset (A_{2i-2}\cap A_{2i})$ for all $i$, 
this then defines an edge path in ${\cal F\cal F}_1(F_n)$ of half the length and the 
same endpoints, which is what we wanted to show.  
\end{proof}

\begin{example}\label{n=3} 
If $n=3$ then there is only one graph ${\cal F\cal F}_1(F_3)$, and by 
Proposition \ref{freefactor}, it is $2$-quasi-isometric to the free factor graph. 
\end{example}

Our next goal is to relate the graph ${\cal F\cal F}_{n-2}(F_n)$ to the free
splitting graph. We use a topological version of this graph which 
was worked out carefully in \cite{AS11}. 

\begin{lemma}\label{freesplitsphere}
The sphere graph of $M$ is a topological realization of the free splitting graph
${\cal F\cal S}(F_n)$.
\end{lemma}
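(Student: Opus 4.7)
The plan is to realize ${\cal S\cal G}_n$ as a topological model for ${\cal F\cal S}(F_n)$ via the correspondence between sphere systems in $M$ and free splittings of $F_n=\pi_1(M)$ due to Laudenbach \cite{Laudenbach}.

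The main ingredient is Laudenbach's theorem. An isotopy class of sphere systems $\Sigma\subset M$ lifts to an $F_n$-invariant collection of spheres in the universal cover $\tilde M$; the dual graph, quotiented by $F_n$, yields a graph of groups decomposition of $F_n$ whose vertex groups are the fundamental groups of the components of $M\setminus\Sigma$ and whose edge groups are trivial since spheres are simply connected. Laudenbach shows this defines a bijection between isotopy classes of sphere systems and conjugacy classes of free splittings with trivial edge groups. Under this bijection, essential spheres correspond to one-edge splittings and $k$-element sphere systems to splittings with $k$ edges, and removing a sphere from a system corresponds precisely to collapsing the associated edge in the splitting.

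Using this I would define the natural vertex map $\Phi:{\cal S\cal G}_n \to {\cal F\cal S}(F_n)$ sending a sphere $S$ to its associated one-edge free splitting. If $S_1,S_2$ can be realized disjointly, then $\{S_1,S_2\}$ is a two-element sphere system whose associated two-edge splitting $T$ collapses onto both $\Phi(S_i)$; hence $T$ is adjacent in ${\cal F\cal S}(F_n)$ to each, so $d_{{\cal F\cal S}}(\Phi(S_1),\Phi(S_2))\le 2$, and $\Phi$ extends to a $2$-Lipschitz map of graphs. For the reverse estimate, every vertex of ${\cal F\cal S}(F_n)$ is within distance one of a one-edge splitting obtained by collapsing all but one edge, which yields coarse density; and a geodesic in ${\cal F\cal S}(F_n)$ connecting two one-edge splittings can be arranged to alternate between one-edge vertices and their multi-edge refinements, so that consecutive one-edge vertices along such a path share a two-edge common refinement and therefore correspond to disjointly realizable spheres, i.e., to an edge of ${\cal S\cal G}_n$. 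This produces a quasi-isometry.

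The principal obstacle is the precise invocation of Laudenbach's theorem, and in particular the confirmation that \emph{disjoint realizability} of spheres (rather than literal disjointness of chosen representatives) corresponds to the existence of a common refinement of the associated splittings. This depends on standard minimal-position and surgery arguments for spheres in $M$, treated carefully in \cite{HV96} and \cite{AS11}. Once this compatibility is in hand, the remainder is a direct matching of the combinatorics of disjointness with the combinatorics of collapse and blow-up.
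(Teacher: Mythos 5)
Your proposal follows essentially the same route as the paper's (itself only sketched) proof: send a sphere to its one-edge splitting, observe that disjoint spheres yield a common two-edge refinement, note coarse density of one-edge splittings, and defer the full sphere-system/splitting correspondence to Laudenbach and \cite{AS11}. Your extra step for the lower bound (collapsing a common refinement of two adjacent one-edge splittings down to a two-edge splitting, hence a disjointly realizable pair of spheres) is exactly the content the paper outsources to \cite{AS11}, so the argument is fine.
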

\begin{proof} (Sketch)
Each sphere $S\in {\cal S\cal G}_n$ determines a 
one-edge free splitting of $F_n$. Namely, if $S$ is non-separating, then 
for a choice of a basepoint $x\in M-S$, the subgroup of $\pi_1(M)$ of all homotopy classes
of loops which are disjoint from $S$ is a free factor of $F_n$ of rank $n-1$, and 
$S$ defines a one-vertex one-loop free splitting (an HNN-extension) of $F_n$.
If $S$ is separating, then $S$ defines a one-edge free splitting of $F_n$ by the Seifert
van Kampen theorem.  

Now let $S^\prime$ be a sphere which is disjoint from $S$. Then with the same argument,
$S\cup S^\prime$ defines a two edge free splitting which collapses to both 
the splitting defined by $S$ and $S^\prime$. Thus the sphere graph
maps $2$-quasi-isometrically 
into ${\cal F\cal S}(F_n)$, with one-dense image. We refer to \cite{AS11} for a complete proof.
\end{proof}

We need two technical properties of the sphere graph ${\cal S\cal G}_n$.
The first is the following simple

\begin{lemma}\label{nonseparating}
The subgraph of ${\cal S\cal G}_n$ of all non-separating spheres in $M$ is 
convex embedded in ${\cal S\cal G}_n$: any two non-separating spheres can be connected
by a geodesic in ${\cal S\cal G}_n$ consisting of non-separating spheres.
\end{lemma}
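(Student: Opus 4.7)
The plan is to take any geodesic $\gamma = (S_0 = S, S_1, \ldots, S_k = S')$ in ${\cal S\cal G}_n$ between the given non-separating spheres and argue by induction on the number of separating spheres appearing on $\gamma$. The inductive step will exhibit, given a separating $S_i$ on $\gamma$ (necessarily with $0 < i < k$, since $S_0$ and $S_k$ are non-separating), a non-separating sphere $T$ that is disjoint from $S_{i-1}$ and $S_{i+1}$ and distinct from them as vertices of ${\cal S\cal G}_n$; replacing $S_i$ by $T$ then yields a new geodesic of the same length with strictly fewer separating spheres.

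The key geometric observation will be that $S_{i-1}$ and $S_{i+1}$ must lie in the same component of $M \setminus S_i$. For otherwise, each being connected, one would lie on each side, and hence they would be realized disjointly in $M$, giving $d_{{\cal S\cal G}_n}(S_{i-1}, S_{i+1}) \leq 1$ and shortcutting $\gamma$. Say then that $S_{i-1}, S_{i+1} \subset \bar M_1$, let $\bar M_2$ denote the closure of the other component of $M \setminus S_i$, and cap off $\bar M_2$ along its boundary sphere $S_i$ with a $3$-ball to obtain a closed manifold $N_2 \cong \#_{n_2}(S^1 \times S^2)$ with $n_2 \geq 1$ (here $n_2 \geq 1$ since $S_i$ is essential in $M$). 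I then take $T$ to be any non-separating sphere in $N_2$, isotoped to lie in the interior of $\bar M_2 \subset M$ and hence disjoint from the cap.

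The required properties of $T$ are then verified as follows: disjointness from $S_{i-1}$ and $S_{i+1}$ is immediate since $T$ lies on the opposite side of $S_i$; non-separation in $M$ follows by first transferring non-separation of $T$ from $N_2$ down to $\bar M_2$ (using that $S_i$ is a single connected boundary sphere, so that any disconnection of $\bar M_2 \setminus T$ would persist in $N_2 \setminus T$) and then to $M$ (since $\bar M_1$ is connected and attached to $\bar M_2 \setminus T$ along $S_i$); and $T$ cannot coincide with $S_{i-1}$ or $S_{i+1}$ as a vertex, for otherwise those two spheres would themselves admit disjoint representatives and again shortcut $\gamma$. Replacing $S_i$ by $T$ in $\gamma$ completes the inductive step. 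The step I expect to require the most care is the transfer of non-separation from $N_2$ back to $M$, as it requires carefully tracking how the capping operation interacts with the complement of $T$; the rest of the proof is essentially driven by the no-shortcut principle for geodesics, which forces separating spheres along $\gamma$ to have both neighbours on a single side and thus leaves ample room on the opposite side to place $T$.
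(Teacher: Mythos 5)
Your argument is correct and is essentially the paper's own proof: both rest on the observation that a separating sphere on a geodesic must have both neighbours on one side (else the geodesic shortcuts), and then replace it by a non-separating sphere chosen on the opposite side. The only cosmetic difference is that you remove separating spheres one at a time by induction, whereas the paper treats all odd-indexed and then all even-indexed vertices; your extra care in checking that the capped-off complementary piece is a nontrivial connected sum and that the replacement sphere is a genuinely new vertex fills in details the paper leaves implicit.
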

\begin{proof}
Let $A,B$ be non-separating spheres and connect $A$ to $B$ by a geodesic 
$(S_j)_{0\leq j\leq m}$. For each $i$ consider the sphere $S_{2i+1}$. It is disjoint from
both $S_{2i}$ and $S_{2i+2}$. As $(S_j)$ is a geodesic, if $S_{2i+1}$ is separating then
$S_{2i},S_{2i+2}$ are contained in the same component $U$ of $M-S_{2i+1}$ since otherwise the sphere 
$S_{2i+1}$ can be deleted from the sequence. Choose a non-separating sphere $S_{2i+1}^\prime$
in the component
$M-U$ and replace $S_{2i+1}$ by $S_{2i+1}^\prime$. The resulting path is a geodesic, and each of the spheres
with odd index are non-separating, while the spheres with even index are unchanged.
Proceed in the same way with the spheres $S_{2i}$. 
\end{proof}

Define a subgraph ${\cal N\cal S\cal G}_n$ of ${\cal S\cal G}_n$ as follows.
The vertices of ${\cal N\cal S\cal G}_n$ are non-separating spheres, and two 
such spheres $S_1,S_2$ are connected by an edge of length one if they can be
realized disjointly and if moreover $M-(S_1\cup S_2)$ is connected. 

The following is the analog of a well-known result for curve graphs.
\begin{proposition}\label{quasiisometric}
The inclusion ${\cal N\cal S\cal G}_n\to {\cal S\cal G}_n$ is a $2$-quasi-isometry.
\end{proposition}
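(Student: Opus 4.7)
The plan has three parts that together give the $2$-quasi-isometry: the inclusion is distance-non-increasing (immediate, as edges of $\mathcal{NSG}_n$ are edges of $\mathcal{SG}_n$), it has $1$-dense image, and it stretches distances by a factor of at most $2$.

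For $1$-density, I would show that any separating sphere $S$ has a non-separating sphere at distance at most $1$. Cutting $M$ along $S$ and filling the two resulting boundary spheres with balls yields closed $3$-manifolds that, by essentiality of $S$ and uniqueness of prime decomposition, are of the form $\#_a(S^1\times S^2)$ and $\#_b(S^1\times S^2)$ with $a+b=n$ and $a,b\geq 1$. A non-separating sphere in the first, chosen disjoint from its fill-in ball, descends to a sphere $T\subset M$ disjoint from $S$; it is non-separating in $M$ since $M-T=(\bar U-T)\cup \bar V$ is a union of two connected pieces meeting along $S$.

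For the distance bound, let $A,B$ be non-separating spheres with $d_{\mathcal{SG}_n}(A,B)=k$. By Lemma \ref{nonseparating} I may fix a geodesic $A=S_0,S_1,\ldots,S_k=B$ in $\mathcal{SG}_n$ consisting entirely of non-separating spheres. I will turn this into a path in $\mathcal{NSG}_n$ of length at most $2k$ by inserting at most one extra vertex per edge: the pair $(S_j,S_{j+1})$ is already an edge of $\mathcal{NSG}_n$ when $M-(S_j\cup S_{j+1})$ is connected; otherwise, I insert a non-separating sphere $T_j$ disjoint from both, so that $(S_j,T_j)$ and $(T_j,S_{j+1})$ are edges of $\mathcal{NSG}_n$.

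The main obstacle is the construction of $T_j$ when $M-(S_j\cup S_{j+1})$ is disconnected. Let $\bar U,\bar V$ be the closures of the two components. Non-separating-ness of $S_j$ in $M$ forces each of $\bar U,\bar V$ to receive exactly one boundary copy of $S_j$ (otherwise cutting $M$ along $S_j$ alone would already disconnect $M$), and symmetrically for $S_{j+1}$. Filling boundaries gives $\hat U=\#_a(S^1\times S^2)$ and $\hat V=\#_b(S^1\times S^2)$; regluing only along the $S_j$ boundary pair realizes $\hat U\#\hat V$ as the manifold obtained from $M$ by cutting along the non-separating sphere $S_{j+1}$ and filling the resulting boundary spheres by balls, namely $\#_{n-1}(S^1\times S^2)$. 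Uniqueness of prime decomposition forces $a+b=n-1\geq 2$, so without loss of generality $a\geq 1$, and I pick a non-separating sphere $T_j\subset\hat U$ disjoint from the two fill-in balls; this places $T_j$ in $U\subset M$, disjoint from $S_j\cup S_{j+1}$. Finally, $M-(S_j\cup T_j)=(\bar U-T_j-S_j)\cup(\bar V-S_j)$ is a union of two connected pieces (the first because $T_j$ is non-separating in $\hat U$ and removing a boundary sphere preserves connectedness) meeting along $S_{j+1}$, hence is connected; $M-(T_j\cup S_{j+1})$ is handled symmetrically.
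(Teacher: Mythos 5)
Your proof is correct and follows essentially the same route as the paper: $1$-density of non-separating spheres, Lemma \ref{nonseparating} to obtain a geodesic of non-separating spheres, and insertion of one intermediate non-separating sphere per bounding pair, found inside a non-trivial $S^1\times S^2$-summand of the complement; you merely supply more of the prime-decomposition bookkeeping that the paper leaves implicit. (One tiny slip: that each of $\bar U,\bar V$ receives exactly one boundary copy of $S_j$ follows from $S_{j+1}$ being non-separating, not $S_j$ --- if both sides of $S_j$ lay in $U$, then $U\cup S_j$ and $V$ would disconnect $M-S_{j+1}$.)
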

\begin{proof}
Since every separating sphere is of distance one
to a non-separating sphere,
the graph ${\cal N\cal S\cal G}_n$ is 
one-dense in ${\cal S\cal G}_n$. Furthermore, by Lemma \ref{nonseparating}, two 
vertices of ${\cal N\cal S\cal G}_n$ can be connected by a geodesic $(S_i)\subset  
{\cal S\cal G}_n$ consisting of non-separating spheres.

It is possible that in the path $(S_i)$, there are two adjacent spheres, say the 
spheres $S_i,S_{i+1}$,  which form a bounding pair,
that is, such that $M-(S_i\cup S_{i+1})$ is disconnected. We now replace successively each such pair
$S_i,S_{i+1}$ by an edge path $S_i,D_i,S_{i+1}$ of length two such that $M-(S_i\cup D_i)$ and 
$M-(D_i\cup S_{i+1})$ are both connected. To see that this is possible note that if a bounding pair
exists then $n\geq 3$. Then $M-(S_i\cup S_{i+1})$ contains a component which is a non-trivial connected 
sum of $S^1\times S^2$ with the interiors of two balls removed. Such a manifold contains a non-separating 
embedded sphere $D_i$. This sphere is disjoint from $S_i\cup S_{i+1}$, 
and $M-(S_i\cup D_i)$ and $M-(D_i\cup S_{i+1})$ are both connected. 

The length of the modified path $(S_i^\prime)$ is at most twice the length of the path $(S_i)$ connecting
the same endpoints. Furthermore, 
any two consecutive vertices $S_i^\prime,S_{i+1}^\prime$ of this path have the property that 
$M-(S_i^\prime \cup S_{i+1}^\prime)$ is connected. This completes the proof of the lemma.
\end{proof}

\begin{example}\label{fareygraph}
The free group $F_2$ with two generators is the fundamental group of a once punctured
torus $T$.  Each oriented non-peripheral simple closed curve $c$ on $T$ determines the conjugacy
class of 
a primitive element of $F_2$, and any conjugacy class of a primitive element arises in this
way. Now primitive elements in $F_2$ are precisely the generators of corank one free factors of 
$F_2$. Moreover, conjugacy classes of corank one free factors of $F_2$ are in bijection with non-separating
spheres in the manifold $M$. Thus
the vertices of ${\cal N\cal S\cal G}_2$ correspond precisely to the simple closed
curves on $T$.

Two such conjugacy classes are connected by an edge in ${\cal N\cal S\cal G}_2$ 
if they correspond to disjoint spheres in 
$M$. This is the case if and only if they define a free basis of $F_2$, which is the case if and only 
if the simple closed curves on $T$ defining these conjugacy classes
intersect up to homotopy in precisely one point. As a consequence, the graph ${\cal N\cal S\cal G}_2$ 
is nothing else than the familiar \emph{Farey graph}. 
\end{example}

The relation between the free splitting graph ${\cal F\cal S}(F_n)$ and the graph 
${\cal F\cal F}_{n-2}(F_n)$ is now a consequence of the following observation.

\begin{lemma}\label{spherefactor}
There exists a one-Lipschitz simplicial map ${\cal N\cal S\cal G}_n\to 
{\cal F\cal F}_{n-2}(F_n)$ which is surjective on vertices.
\end{lemma}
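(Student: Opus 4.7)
The plan is to construct the map $\Phi: {\cal N\cal S\cal G}_n\to {\cal F\cal F}_{n-2}(F_n)$ topologically by sending a non-separating sphere to the conjugacy class of the free factor realized by the fundamental group of its complement, then verify that the adjacency condition in ${\cal N\cal S\cal G}_n$ (disjoint non-separating spheres with connected complement) yields exactly the intersection condition defining edges in ${\cal F\cal F}_{n-2}(F_n)$.

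First I define $\Phi$ as follows. Let $S$ be a non-separating sphere in $M$ and choose a basepoint $x\in M\setminus S$. Cutting $M$ along $S$ and capping the two resulting sphere boundaries with balls produces a manifold homeomorphic to $\sharp_{n-1}S^1\times S^2$, so the inclusion $M\setminus S\hookrightarrow M$ identifies $\pi_1(M\setminus S,x)$ with the vertex group of the HNN-decomposition of $F_n$ associated to $S$, which is a free factor of $F_n$ of rank $n-1$ (this is the content of Lemma \ref{freesplitsphere} together with standard Seifert--van Kampen). The conjugacy class $A_S:=\Phi(S)$ does not depend on the basepoint, giving a well-defined map on vertices.

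Next I verify that $\Phi$ is simplicial. Suppose $S_1,S_2$ are disjoint non-separating spheres with $M\setminus (S_1\cup S_2)$ connected. Cutting along both spheres and capping the four resulting boundary spheres with balls gives a closed connected $3$-manifold homeomorphic to $\sharp_{n-2}S^1\times S^2$, whose fundamental group is therefore free of rank $n-2$, and is realized as a free factor $C\subset F_n$ by the same argument as above. Since the inclusion $M\setminus(S_1\cup S_2)\hookrightarrow M$ factors through $M\setminus S_i$ for $i=1,2$, the conjugacy class of $C$ is contained in both $A_{S_1}$ and $A_{S_2}$ (up to simultaneous conjugation), so these are either equal or joined by an edge in ${\cal F\cal F}_{n-2}(F_n)$. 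In particular $\Phi$ is simplicial, hence one-Lipschitz.

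For surjectivity on vertices, let $A\subset F_n$ be a corank one free factor and extend a free basis of $A$ to a basis $a_1,\dots,a_n$ of $F_n$. Realize $M$ as a connected sum of $n$ copies of $S^1\times S^2$ so that $a_i$ is represented by the core circle of the $i$-th summand; then the sphere $\{\mathrm{pt}\}\times S^2$ in the $n$-th summand is non-separating, its complement deformation retracts onto a wedge realizing $\langle a_1,\dots,a_{n-1}\rangle=A$, and hence $\Phi$ sends this sphere to the conjugacy class of $A$. The main point requiring care is verifying that the rank $n-2$ factor coming from $\pi_1(M\setminus(S_1\cup S_2))$ actually injects as a free factor of $F_n$ and coincides (up to conjugation) with a subfactor of both $A_{S_1}$ and $A_{S_2}$; this rests on Laudenbach's theorem and the fact that successive cuts along non-separating spheres whose union has connected complement correspond to iterated HNN extensions, so the innermost vertex group is simultaneously a free factor of each vertex group obtained by performing only one of the two cuts.
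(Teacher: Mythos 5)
Your proof is correct and follows essentially the same route as the paper's: send a non-separating sphere to the corank-one free factor of loops in its complement, and observe that for an edge of ${\cal N\cal S\cal G}_n$ the group $\pi_1(M\setminus(S_1\cup S_2))$ is a rank $n-2$ free factor contained in both images. You additionally spell out the surjectivity on vertices, which the paper's proof leaves implicit; that is a welcome but minor addition.
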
 
\begin{proof} If $S_1,S_2$ are vertices in ${\cal N\cal S\cal G}_n$ which are connected
by an edge, then for a choice of a basepoint $x\in M-(S_1\cup S_2)$, 
the spheres $S_i$ define corank one free factors $A_1,A_2$ of $F_n=\pi_1(M,x)$ 
of homotopy classes of loops disjoint from $S_1,S_2$, and these free factors intersect 
in the corank $2$ free factor of homotopy classes of loops disjoint from both 
$S_1\cup S_2$. Thus the edge between $S_1$ and $S_2$ in ${\cal N\cal S\cal G}_n$ defines
an edge in the graph ${\cal F\cal F}_{n-2}(F_n)$ as claimed in the lemma.
\end{proof}

As an immediate consequence of Lemma \ref{freesplitsphere}, 
Lemma \ref{quasiisometric} and Lemma \ref{spherefactor}, we obtain 

\begin{corollary}\label{retract}
  There exists a coarse two-Lipschitz map
\[{\cal F\cal S}(F_n)\to 
{\cal F\cal F}_{n-2}(F_n)\] which is surjective on vertices.
\end{corollary}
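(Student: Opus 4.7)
The strategy is to compose three maps supplied by the preceding results: a coarse inverse of the $2$-quasi-isometric embedding ${\cal S\cal G}_n\to {\cal F\cal S}(F_n)$ from Lemma \ref{freesplitsphere}, a coarse inverse of the $2$-quasi-isometric inclusion ${\cal N\cal S\cal G}_n\to {\cal S\cal G}_n$ from Proposition \ref{quasiisometric}, and the one-Lipschitz surjection ${\cal N\cal S\cal G}_n\to {\cal F\cal F}_{n-2}(F_n)$ of Lemma \ref{spherefactor}. Each of these is either one-Lipschitz or coarsely two-Lipschitz, so the issue is only to verify that the compounded constant can be kept at $2$.

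Concretely, to a vertex $G$ of ${\cal F\cal S}(F_n)$, that is, a graph-of-groups decomposition of $F_n$, I would first associate a single-edge collapse of $G$; via Lemma \ref{freesplitsphere} this corresponds to a sphere class $S(G)\subset M$. If $S(G)$ is separating, I replace it by a non-separating sphere $\tilde S(G)$ disjoint from it, whose existence is supplied by the argument in the proof of Proposition \ref{quasiisometric}; otherwise $\tilde S(G)=S(G)$. Finally I apply the map of Lemma \ref{spherefactor}, which sends $\tilde S(G)$ to the corank-one free factor of loops in $M$ disjoint from $\tilde S(G)$, a vertex of ${\cal F\cal F}_{n-2}(F_n)$. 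Surjectivity on vertices is then inherited from that of Lemma \ref{spherefactor}, because every non-separating sphere arises as the collapse of some one-edge free splitting.

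For the two-Lipschitz bound I would argue as follows. If $G, G'$ are adjacent vertices in ${\cal F\cal S}(F_n)$, then by definition one is a collapse of the other, so the chosen single-edge collapses of $G$ and of $G'$ both arise from edges of the finer of the two decompositions; the spheres $S(G)$ and $S(G')$ are therefore disjoint, hence at distance at most one in ${\cal S\cal G}_n$. Passing to a non-separating replacement may double distances, in accordance with the $2$-quasi-isometry of Proposition \ref{quasiisometric}, and the final map is one-Lipschitz by Lemma \ref{spherefactor}. The principal technical point, where essentially all the care is needed, is arranging the choices of single-edge collapses and non-separating replacements so that they are coarsely well-defined and coarsely consistent across adjacent vertices; this is bookkeeping that rests entirely on the preceding results and requires no new geometric input.
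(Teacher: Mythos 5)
Your proposal is correct and follows exactly the route the paper intends: the paper states the corollary as an immediate consequence of Lemma \ref{freesplitsphere}, Proposition \ref{quasiisometric} and Lemma \ref{spherefactor}, giving no further argument, and your composition of (coarse inverses of) those three maps, together with the observation that adjacent free splittings yield disjoint spheres, is precisely the intended verification.
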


\begin{example}\label{ex:rank3}
If $n=3$ then Proposition \ref{freefactor} shows that 
the free factor graph is 2-quasi-isometric to  the graph ${\cal F\cal F}_{n-2}(F_n)$.
However, it is very different from the free splitting graph. 
Indeed, there are elements of ${\rm Out}(F_3)$ which act on the 
free splitting graph as loxodromic isometries, but which fix a free factor.
Such an example is discussed in Example 4.2 of \cite{HM19}.
It can be constructed with the help of a 
\emph{relative train track map}.

The example can be viewed as a family of spheres in $M$ which are all disjoint from 
a fixed simple loop defining a generator of $F_3$, but contain tubes 
winding around the loop. 
\end{example}

Recall from the introduction
that a simple sphere system $\Sigma$ can be modified to another simple sphere 
system by a \emph{surgery move} in direction of a sphere system $\Sigma^\prime$ 
as follows. 
Let $S^\prime \in \Sigma^\prime$, assumed to be in minimal
position with respect to $\Sigma$. Then each component
of $S^\prime \cap \Sigma$  is an embedded circle in 
$S^\prime$. 

An innermost such circle bounds an embedded disk $D$ in $S^\prime$. Its boundary $\partial D$ 
is contained in a sphere $S\in \Sigma$. 
The two spheres obtained by
gluing $D$ to each of the two components of $S-\partial D$ are disjoint and 
disjoint from $\Sigma$. Let $\Sigma_1$ be 
the union of $\Sigma-S$ with these two spheres, with parallel
copies of the same sphere removed. 
By Lemma 3.1 of \cite{HV96}, the sphere system 
$\Sigma_1$ is simple, and it 
has fewer intersections with $\Sigma^\prime$ than $\Sigma$. 

Repetition of this construction, keeping the direction $\Sigma^\prime$ fixed (and starting
in a second step from $\Sigma_1$) 
are called \emph{surgery sequences}. 

Note that there is a natural coarsely well defined projection 
$\tau:{\cal N\cal S\cal G}_n\to {\cal F\cal F}_{m}(F_n)$
which factors through the composition of the map from Lemma \ref{spherefactor} 
with the inclusion ${\cal F\cal F}_{n-2}(F_n)\to {\cal F\cal F}_m(F_n)$. 
As in \cite{HiHo17}, we use the images of surgery sequences under the map 
$\tau$ and an argument of \cite{KR14} to show

\begin{theo}\label{conj1}
Each of the graphs ${\cal F\cal F}_m(F_n)$ $(m\leq n-2)$ is 
hyperbolic, and the natural projections of surgery paths are 
uniform unparameterized
quasi-geodesics in ${\cal F\cal F}_m(F_n)$.
\end{theo}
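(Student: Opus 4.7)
The plan is to apply the Bowditch--Kapovich--Rafi ``guessing geodesics'' criterion of \cite{KR14} to ${\cal F\cal F}_m(F_n)$, using preferred paths coming from surgery sequences in the simple sphere system graph. Concretely, ${\cal N\cal S\cal G}_n$ is quasi-isometric to the free splitting graph by Lemma \ref{freesplitsphere} and Proposition \ref{quasiisometric}, and hence hyperbolic. The composition
\[
\pi := \tau \circ \Theta : {\cal S\cal S\cal G}_n \longrightarrow {\cal F\cal F}_m(F_n)
\]
is coarsely defined, coarsely one-Lipschitz, and surjective on vertices. For each ordered pair of simple sphere systems I would declare the $\pi$-image of any surgery sequence between them to be a preferred path in ${\cal F\cal F}_m(F_n)$, and then check that these preferred paths satisfy the two hypotheses of the criterion.

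Connectivity is immediate: surgery sequences join any two simple sphere systems, and $\pi$ is coarsely surjective on vertices. The substantive step is a uniform thin-triangle bound: for any three preferred paths forming a triangle, each side must lie in a bounded neighborhood of the union of the other two. For this I would use the main theorem of \cite{HiHo17}, which says that $\Theta$-images of surgery sequences are uniform unparameterized quasi-geodesics in the (hyperbolic) sphere graph ${\cal S\cal G}_n$. Such a triangle is therefore uniformly thin in ${\cal S\cal G}_n$, and since $\tau$ is one-Lipschitz the thin-triangle neighborhood bound transports verbatim to ${\cal F\cal F}_m(F_n)$. The criterion then delivers in a single stroke both the hyperbolicity of ${\cal F\cal F}_m(F_n)$ and the uniform unparameterized quasi-geodesic property for the preferred paths.

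The main obstacle will be that the $\tau$-image of an unparameterized quasi-geodesic in ${\cal S\cal G}_n$ is not automatically an unparameterized quasi-geodesic in ${\cal F\cal F}_m(F_n)$: because $\tau$ is one-Lipschitz but far from bi-Lipschitz, free-factor distance can be much smaller than sphere-graph distance, and long subarcs of the surgery projection may collapse. The Kapovich--Rafi framework is designed precisely to circumvent this, producing the quasi-geodesic lower bound in the target as an \emph{output} of the thin-triangle mechanism rather than an input. Two minor technical issues will also require care: intermediate spheres along a surgery sequence may be separating, so the non-separating perturbation of Lemma \ref{nonseparating} is needed to remain inside ${\cal N\cal S\cal G}_n$ where $\tau$ is defined; and uniformity in $m\leq n-2$ has to be recorded, which is straightforward since $\tau$ factors through ${\cal F\cal F}_{n-2}(F_n) \hookrightarrow {\cal F\cal F}_m(F_n)$ with both maps one-Lipschitz.
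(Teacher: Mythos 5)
Your overall strategy --- surgery paths as preferred paths plus the Kapovich--Rafi criterion --- is the same as the paper's, but there is a genuine gap at the decisive step. Both the Bowditch-style guessing-geodesics criterion and the Lipschitz-map formulation of \cite{KR14} require, in addition to the thin-triangle condition, a \emph{local} condition: whenever two endpoints are at distance at most $1$ in the target ${\cal F\cal F}_m(F_n)$, the preferred path joining them must have uniformly bounded diameter there (equivalently, in the Lipschitz-map formulation, geodesics in ${\cal N\cal S\cal G}_n$ between points whose $\tau$-images are adjacent must have uniformly bounded-diameter image). Without this the criterion is simply false: declaring $L(a,b)$ to be the entire graph satisfies the thin-triangle condition in any graph whatsoever. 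Your proposal not only omits this verification but explicitly asserts that the framework ``circumvents'' the need for it; it does not --- the quasi-geodesic \emph{lower} bound is indeed an output of the machine, but only once this local hypothesis has been fed in. Since $\tau$ is one-Lipschitz with enormous fibers, two spheres whose projections are adjacent in ${\cal F\cal F}_m(F_n)$ can be arbitrarily far apart in ${\cal S\cal G}_n$, the surgery path between them is correspondingly long, and a priori its $\tau$-image could have large diameter.

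Supplying this local condition is precisely the substantive content of the paper's proof. If $\tau(S_0)$ and $\tau(S_1)$ span an edge of ${\cal F\cal F}_m(F_n)$, the rank-$m$ free factor witnessing that edge can be realized, by Lemma 2.2 of \cite{HV98}, as an embedded rose $R$ with $m$ petals disjoint from both $S_0$ and $S_1$; every sphere occurring on a surgery path from $S_0$ to $S_1$ then remains disjoint from $R$, so its projection stays within distance one of $\tau(S_0)$. Combined with the fact that surgery paths are uniform unparameterized quasi-geodesics in the hyperbolic graph ${\cal N\cal S\cal G}_n$ \cite{HiHo17}, this gives the required uniform quasi-convexity of the preimages of edges, and only then does \cite{KR14} apply. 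Your transport of the thin-triangle bound through the one-Lipschitz map $\tau$ and your reduction to ${\cal N\cal S\cal G}_n$ are fine as far as they go, but without the rose argument (or an equivalent) the proof does not close.
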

\begin{proof}
We follow \cite{HiHo17} (the proof of Theorem 8.3). Let $S_0,S_1$ be non-separating 
spheres and 
assume that $\tau(S_0)$ and $\tau(S_1)$ are connected by 
an edge in ${\cal F\cal F}_{m}(F_n)$. Then we can find 
an embedded rose $R$ in $M$ with vertex $p$ and with $m$ petals
so that the inclusion $\pi_1(R,p)\to \pi_1(M,p)$ is $\pi_1$-injective and such that both 
$S_0$ and $S_1$ are disjoint from $R$.

Namely, let $\tilde M$ be obtained from
$M$ by removing the interior of a small ball from $M$. Put a basepoint $p$ on the 
boundary of $\tilde M$. For any non-separating sphere $S$ in $M$ choose a lift 
$\tilde S$ of $S$ to $\tilde M$. If $\tau(S_0),\tau(S_1)$ are connected in 
${\cal F\cal F}_{m}(F_n)$ by an edge then there exists 
$g\in F_n$ such that $\pi_1(\tilde M-\tilde S_0,p)$ and 
$\pi_1(\tilde M-g\tilde S_1g^{-1},p)$ contain a free factor of rank $m$ defining the 
conjugacy class of a free factor as in the definition of an edge in ${\cal F\cal F}_{m}(F_n)$.
Here $g\tilde S_1g^{-1}$ denotes the image of $\tilde S_1$ under a diffeomorphism of 
$\tilde M$ realizing the conjugation by $g$. 

It follows from Lemma 2.2 of \cite{HV98} that this free factor can be represented as the 
fundamental group of a rose $R$ with $m$ petals and basepoint at $p$ which is disjoint 
from both $\tilde S_0$ and $g\tilde S_1\tilde g^{-1}$. Projection of this rose as well as the spheres
$\tilde S_0,\tilde S_1$ to $M$ yields the statement claimed in the first paragraph of this proof.

Since neither $S_0$ nor $S_1$ intersect the rose $R$, 
any surgery path connecting $S_0$ to $S_1$ consists of spheres disjoint from $R$. As surgery
paths are uniform unparameterized quasi-geodesics in ${\cal S\cal G}_n$ \cite{HiHo17} and 
hence give rise to uniform unparameterized
quasi-geodesics in ${\cal N\cal S\cal G}_n$ by Proposition \ref{quasiisometric}, this implies that the 
fibers of the projection $\tau$ are uniformly quasi-convex: Any two points in a fiber are connected
by a uniform quasi-geodesic in ${\cal N\cal S\cal G}_n$ 
which is entirely contained in this fiber.

As a consequence, we can apply 
the main result of \cite{KR14}. We conclude that indeed, for any $m\leq n-2$ 
the level $m$ free factor graph is hyperbolic, and surgery paths in ${\cal S\cal G}_n$ 
(that is, edge  paths in ${\cal N\cal S\cal G}_n$ at distance two from surgery paths in 
${\cal S\cal G}_n$) project to uniform
unparameterized quasi-geodesics in ${\cal F\cal F}_m(F_n)$.
\end{proof}

\section{Exponential growth}\label{uniform}

For any sphere system $\Sigma$ and any embbeded finite graph $R$ in $M=\sharp_n S^1\times S^2$
let \[\iota(\Sigma,R)\] be the minimal number of intersection points
between $\Sigma$ and a homotopic realization of $R$, 
counted with multiplicity. Equivalently, this is the minimal
number of intersection points between $\Sigma$ and 
a homotopic realization of $R$ such that 
every vertex of $R$ is contained in $M-\Sigma$.

A simple sphere system $\Sigma$ is \emph{reduced} 
if its complement is connected. Each reduced sphere
system is \emph{dual} to a unique isotopy class of
a rose $R\subset M$ which
defines the conjugacy class of a free basis for $F_n$. Here 
duality means that up to homotopy, 
each component of $\Sigma$ intersects 
the rose $R$ in a single point.

Recall that ${\rm Out}(F_n)$ can be generated by \emph{Nielsen moves}. Such a Nielsen
move either is a Nielsen twist or a permutation of two rank one free factors
in a free basis (up to conjugation).
A Nielsen twist
replaces a marked rose $R$ by another marked rose $R^\prime$.
There is a homotopy equivalence
$R^\prime\to R$ which maps a leaf of $R^\prime$ to a loop
in $R$  
which either is a single leaf of $R$ or passes through
precisely two leaves. Thus we have

\begin{lemma}\label{nielsentwist}
  Let $\Sigma$ be a simple
  sphere system, let $R$ be a marked rose and assume that
  $R^\prime$ is obtained from $R$ by a single Nielsen twist; then
  \[\iota(\Sigma,R^\prime)\in \left[\frac{1}{2} \iota(\Sigma,R),
 2 \iota(\Sigma,R)\right].\]   
\end{lemma}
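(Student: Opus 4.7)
The plan is to use directly the structural information about Nielsen twists that immediately precedes the lemma: the twist is encoded by a homotopy equivalence $f:R^\prime\to R$ (and its inverse $g:R\to R^\prime$, which itself corresponds to a Nielsen twist) such that the image under $f$ of any leaf of $R^\prime$ is a loop in $R$ traversing at most two of the leaves of $R$, and symmetrically for $g$. Since $\iota(\Sigma,\cdot)$ is defined by minimizing over homotopic realizations, I only need to exhibit, starting from a near-minimal realization of $R$, a realization of $R^\prime$ whose intersection count with $\Sigma$ is at most $2\,\iota(\Sigma,R)$; the lower bound will then follow from the symmetric argument applied to $g$.

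First I would fix a realization of $R$ in minimal position with respect to $\Sigma$, chosen so that the vertex of $R$ lies in $M-\Sigma$ and each intersection with $\Sigma$ is transverse. For each leaf $\ell^\prime$ of $R^\prime$, let $f(\ell^\prime)$ be the corresponding loop in $R$; by hypothesis it traverses either a single leaf $\ell_a$ of $R$ or a concatenation of two (possibly equal) leaves $\ell_a\cdot \ell_b^{\pm 1}$ based at the vertex of $R$. Using the chosen realization of $R$, this gives an immersed loop in $M$ representing $\ell^\prime$, and the number of its transverse intersections with $\Sigma$ is bounded by the sum of the intersection numbers of $\ell_a$ (and, if applicable, $\ell_b$) with $\Sigma$. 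A small perturbation near the vertex of $R$ produces a realization of $R^\prime$ in $M$ in which the vertex lies in $M-\Sigma$ and the total intersection count is at most twice the intersection count of the chosen realization of $R$ with $\Sigma$.

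Summing over the leaves of $R^\prime$, each leaf of $R$ is used at most twice (once as itself, and at most once in a concatenation forming another leaf of $R^\prime$); consequently the constructed realization of $R^\prime$ has at most $2\,\iota(\Sigma,R)$ intersections with $\Sigma$, and by definition of $\iota$ this yields $\iota(\Sigma,R^\prime)\leq 2\,\iota(\Sigma,R)$. Applying the same argument to the inverse twist $g:R\to R^\prime$ gives $\iota(\Sigma,R)\leq 2\,\iota(\Sigma,R^\prime)$, which is precisely the lower bound $\iota(\Sigma,R^\prime)\geq \tfrac{1}{2}\iota(\Sigma,R)$. The permutation case is even easier, since then each leaf of $R^\prime$ is literally a leaf of $R$ and the intersection number is preserved.

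There is no substantive obstacle beyond bookkeeping: the only points requiring care are that the minimal realization of $R$ can be chosen with vertex off $\Sigma$ (standard, and already used implicitly in the definition of $\iota$), and that the ``concatenation'' loops used to build $R^\prime$ do not accidentally create new intersections with $\Sigma$ — which they do not, because transversality is preserved away from the basepoint and we may perturb the basepoint slightly in $M-\Sigma$. The inequality is therefore a direct consequence of the combinatorial description of Nielsen twists at the level of marked roses recalled above.
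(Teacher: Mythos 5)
Your argument is correct and follows essentially the same route as the paper: the upper bound comes from the fact that the homotopy equivalence $R^\prime\to R$ induced by a Nielsen twist is (at worst) a $2{:}1$ map on leaves, so a minimal realization of $R$ yields a realization of $R^\prime$ with at most twice as many intersections, and the lower bound follows by applying the same observation to the inverse twist. Your version just spells out the leaf-by-leaf bookkeeping that the paper compresses into the phrase ``can be represented by a $2{:}1$ map.''
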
  
\begin{proof}
As the marked homotopy equivalence
$R^\prime\to R$ can be represented by a $2:1$ map, we have
$\iota(\Sigma,R^\prime)\leq 2 \iota(\Sigma,R)$.
On the other hand, the marked rose $R^\prime$ is obtained from $R$ by a single
Nielsen twist as well, which immediately shows the second part of the
inequality.
\end{proof}

Let ${\cal R}$ be the 
graph whose set of vertices is the set of all marked roses
and where two such roses are connected by an 
edge of length one if they are related by a Nielsen move. 
Then ${\cal R}$ is an ${\rm Out}(F_n)$-graph on which 
${\rm Out}(F_n)$ acts properly and cocompactly. In other words,
${\cal R}$ is a geometric realization of ${\rm Out}(F_n)$.

Sphere systems define another geometric realization of
${\rm Out}(F_n)$. Namely, let ${\cal S\cal S\cal G}_n$ be the simple
sphere system graph and let $d_{\cal S\cal S\cal G}$ be the distance
in ${\cal S\cal S\cal G}_n$. By invariance under the action of
${\rm Out}(F_n)$, cocompactness, and the fact that stabilisers of
simple sphere systems are finite, the sphere system graph is
equivariantly quasi-isometric to ${\rm Out}(F_n)$. 

Given any simple sphere system $\Sigma$, we can obtain a reduced 
sphere system by removal of some of the spheres. Such a
reduced sphere system admits a dual rose. We call a rose $R$ obtained
in this way \emph{dual} to $\Sigma$ although $R$ may not be unique.
The coarsely
well defined map ${\cal S\cal S\cal G}_n\to {\cal R}$
which associates to a simple sphere system a dual rose dual 
is a coaresely ${\rm Out}(F_n)$-equivariant quasi-isometry.

\begin{lemma}\label{intersectwo}
There exists a number $C_0>0$ with the following properties.
Let $\Sigma_0,\Sigma_1$ be reduced
sphere systems and let $R$ be a rose dual
to $\Sigma_1$; then
$d_{\cal S\cal S\cal G}(\Sigma_0,\Sigma_1)\geq C_0\log_2 \iota(\Sigma_0,R)$.
\end{lemma}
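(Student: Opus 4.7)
The plan is to combine the coarsely ${\rm Out}(F_n)$-equivariant quasi-isometry ${\cal S\cal S\cal G}_n\to {\cal R}$ described just before the lemma with Lemma \ref{nielsentwist}, converting a short path in the simple sphere system graph into a short sequence of Nielsen moves in $\cal R$ and then using the factor-$2$ control on intersection numbers to bound $\iota(\Sigma_0,R)$ by an exponential in the length of that sequence.

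First I would pick a rose $R_0$ dual to $\Sigma_0$. By definition of duality each component of $\Sigma_0$ meets $R_0$ transversely in a single point, so $\iota(\Sigma_0,R_0)=n$. Setting $k=d_{\cal S\cal S\cal G}(\Sigma_0,\Sigma_1)$ and using that $\Sigma\mapsto(\text{dual rose})$ is a coarsely ${\rm Out}(F_n)$-equivariant quasi-isometry from ${\cal S\cal S\cal G}_n$ to $\cal R$, I obtain a universal constant $D>0$ and an edge path $R_0=R^{(0)},R^{(1)},\dots,R^{(\ell)}=R$ in $\cal R$ with $\ell\le Dk+D$, each step being a single Nielsen move.

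Next, along this Nielsen sequence I would apply Lemma \ref{nielsentwist} step by step, keeping $\Sigma_0$ fixed: each Nielsen twist multiplies $\iota(\Sigma_0,\cdot)$ by at most a factor of $2$, while a permutation of two rank one factors of a free basis only relabels the petals of the rose and therefore leaves the embedded graph $R^{(i)}\subset M$ unchanged up to isotopy, so the intersection number does not change. Iterating gives
\[\iota(\Sigma_0,R)\le 2^{\ell}\,\iota(\Sigma_0,R_0)=n\cdot 2^{\ell}.\]

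Taking $\log_2$ and inserting $\ell\le Dk+D$ yields $\log_2\iota(\Sigma_0,R)\le Dk+D+\log_2 n$, which rearranges to $k\ge \tfrac{1}{D}\log_2\iota(\Sigma_0,R)-\tfrac{D+\log_2 n}{D}$. The stated inequality then follows by choosing $C_0=1/D$ and, if necessary, shrinking it further to absorb the additive constant on the bounded range where $\log_2\iota(\Sigma_0,R)$ is of order one (the inequality is trivial when $d_{\cal S\cal S\cal G}(\Sigma_0,\Sigma_1)=0$ since then $R$ is dual to $\Sigma_0$ and $\iota(\Sigma_0,R)=n$ is bounded). I do not expect any real obstacle: the argument is essentially a direct composition of the existing quasi-isometry with Lemma \ref{nielsentwist}, and the only delicate point is the easy observation that permutation Nielsen moves act only on the marking and leave the underlying embedded rose untouched.
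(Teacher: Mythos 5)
Your proposal is correct and follows essentially the same route as the paper: the paper's proof consists precisely of invoking the coarse quasi-isometry ${\cal S\cal S\cal G}_n\to{\cal R}$ and the factor-$2$ control from Lemma \ref{nielsentwist}, and you have simply filled in the details (including the harmless observation about permutation moves). The one caveat is that your final absorption step cannot literally dispose of the additive constant --- when $d_{\cal S\cal S\cal G}(\Sigma_0,\Sigma_1)=0$ one has $\iota(\Sigma_0,R)=n\geq 2$, so the inequality as stated fails for any $C_0>0$ --- but this is an imprecision in the lemma itself (it should be read coarsely, i.e.\ with an additive error), which the paper's own proof glosses over in exactly the same way.
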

\begin{proof} In Lemma \ref{nielsentwist} we observed 
that each Nielsen move decreases intersection numbers between
a rose and a sphere system by at most a factor of two. Since the 
graph ${\cal S\cal S\cal G}_n$ is coarsely ${\rm Out}(F_n)$-equivariantly 
quasi-isometric to ${\cal R}$,  from this the lemma follows.
\end{proof}

Let $(\Sigma_i)_{0\leq i\leq m}$ be a surgery sequence of simple
sphere systems.
For each $i$ let $R_i$ be a rose dual to $\Sigma_i$.
Then $R_i$ defines a vertex in the graph 
${\cal R}$. The distance in ${\cal R}$ between
$R_i$ and $R_{i+1}$ is bounded from above independently of $i$.
We use this to observe

\begin{lemma}\label{lowerbound}
There exists $C_1 >0$ with the following property.
Let $(\Sigma_i)_{0\leq i\leq m}$ be a surgery sequence of simple
  sphere systems, and let $(R_i)$ be a sequence of dual roses; then
\[\iota(\Sigma_m,R_1)\geq C_1 \iota (\Sigma_m,R_0).\]  
\end{lemma}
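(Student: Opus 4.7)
The plan is to reduce the statement to iterated applications of Lemma \ref{nielsentwist}. The key observation is that consecutive simple sphere systems in a surgery sequence are adjacent in $\mathcal{S}\mathcal{S}\mathcal{G}_n$, so $d_{\mathcal{S}\mathcal{S}\mathcal{G}}(\Sigma_0,\Sigma_1) = 1$. Via the coarsely $\mathrm{Out}(F_n)$-equivariant quasi-isometry between $\mathcal{S}\mathcal{S}\mathcal{G}_n$ and the rose graph $\mathcal{R}$ discussed just before Lemma \ref{intersectwo}, any choice of dual roses $R_0, R_1$ are at distance at most some universal constant $N$ in $\mathcal{R}$.

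By definition of $\mathcal{R}$, this means $R_1$ can be obtained from $R_0$ by at most $N$ Nielsen moves. Of these, permutations of rank one free factors clearly preserve the intersection number $\iota(\Sigma_m, \cdot)$ since up to homotopy they correspond to relabeling petals. The remaining moves are Nielsen twists, and for each such move Lemma \ref{nielsentwist} gives that the intersection number with any fixed sphere system changes by a factor in $[1/2, 2]$. Iterating this at most $N$ times yields
\[\iota(\Sigma_m, R_1) \geq 2^{-N}\, \iota(\Sigma_m, R_0),\]
so the lemma holds with $C_1 = 2^{-N}$.

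The only potentially delicate point is confirming that the quasi-isometry $\mathcal{S}\mathcal{S}\mathcal{G}_n \to \mathcal{R}$ really does send adjacent simple sphere systems to points within a bounded distance in $\mathcal{R}$; but this is the content of the preceding discussion, which asserts that the coarse map sending a simple sphere system to any dual rose is a coarsely equivariant quasi-isometry. Given that, the argument above is essentially immediate, and no further analysis of the detailed combinatorics of the individual surgery move is needed, since we work entirely in terms of the geometric structure on $\mathcal{R}$ and the bi-Lipschitz bound from Lemma \ref{nielsentwist}.
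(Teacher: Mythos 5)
Your proof is correct and follows essentially the same route as the paper: bound the number of Nielsen moves separating $R_0$ from $R_1$ by a universal constant, note that permutations do not affect intersection numbers, and apply Lemma \ref{nielsentwist} to each twist. The only (harmless) difference is in how the bounded Nielsen-distance is justified — the paper observes directly that a single surgery operation followed by removal of at most two spheres changes the dual rose by a uniformly bounded number of Nielsen twists, whereas you deduce it from the asserted quasi-isometry $\mathcal{S}\mathcal{S}\mathcal{G}_n\to\mathcal{R}$ applied to adjacent vertices; both are legitimate.
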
  
\begin{proof}
  By definition, the sphere system $\Sigma_1$ is obtained from $\Sigma_0$
  by one surgery operation, followed by removal of at most two spheres from the
  resulting system. Thus the dual rose $R_1$ is obtained from
  the rose $R_0$ by a uniformly bounded number of Nielsen twist (which are the
  generators of ${\rm Out}(F_n)$, permutations play no role here), say at most
  $\ell$ of such twists. The lemma now follows from
  Lemma \ref{nielsentwist}.
\end{proof}

Lemma \ref{lowerbound} shows that for a surgery sequence $(\Sigma_i)_{0\leq i\leq m}$,
intersection numbers with $\Sigma_m$ of roses
dual to $\Sigma_i$ 
decrease at most exponentially along the sequence, with a fixed exponent
depending only on $n$. We next look at such sequences for which
intersection numbers decrease uniformly exponentially.

\begin{definition}\label{def:exponentialgrowth}
For $a\in (0,1)$ and $k\geq 1$ 
the surgery sequence $(\Sigma_i)_{0\leq i\leq m}$ 
has \emph{$(a,k)$-exponential growth} if 
$\iota(\Sigma_m,R_{i+k})\leq a\iota(\Sigma_m,R_{i})$ 
and $\iota(\Sigma_0,R_i)\leq a\iota(\Sigma_0,R_{i+k})$ for all $i$.
\end{definition}

The next result shows that exponential growth yields geometric 
control.

\begin{theo}\label{quasigeo2}
For all $a\in (0,1),k\geq 1$
there is a number $\ell(a,k)>1$ with the following property.
Let $\Sigma,\Lambda$ be two simple sphere systems which  
are connected by a surgery sequence $(\Sigma_i)$.
Assume that this sequence has $(a,k)$-exponential growth.
For each $i$ let $R_i$ be a rose dual to $\Sigma_i$.
Then the sequence $(R_i)$ defines an $\ell(a,k)$-quasi-geodesic
in the graph ${\cal R}$.
\end{theo}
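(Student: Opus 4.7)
The plan is to establish the two sides of the quasi-geodesic inequality separately. The Lipschitz direction $d_{\cal R}(R_i,R_j)\le\ell\,|i-j|$ for some $\ell=\ell(n)$ is essentially contained in the proof of Lemma \ref{lowerbound}: a single surgery step alters the simple sphere system in a controlled way, so a dual rose can be obtained from the previous one by a uniformly bounded number of Nielsen moves, giving bounded ${\cal R}$-distance between consecutive terms of $(R_i)$.

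For the lower bound, the idea is to use the intersection number $\iota(\Sigma_0,R_i)$ as a progress function that grows exponentially along $(R_i)$. Fix $i<j$ and set $N=d_{\cal R}(R_i,R_j)$. Iterating the exponential-growth inequality $\iota(\Sigma_0,R_t)\le a\,\iota(\Sigma_0,R_{t+k})$ along the sub-sequence $i,i+k,\ldots,i+qk$ with $q=\lfloor(j-i)/k\rfloor$ yields $\iota(\Sigma_0,R_{i+qk})\ge a^{-q}\iota(\Sigma_0,R_i)$, and bridging the residual gap of fewer than $k$ surgery steps costs at most a bounded multiplicative factor by Lemmas \ref{nielsentwist} and \ref{lowerbound}. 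Thus there is $C_2=C_2(a,k)>0$ with
\[\iota(\Sigma_0,R_j)/\iota(\Sigma_0,R_i)\ge C_2\,a^{-(j-i)/k}.\]
On the other hand, along any geodesic in ${\cal R}$ from $R_i$ to $R_j$ of length $N$, Lemma \ref{nielsentwist} shows that each Nielsen move changes $\iota(\Sigma_0,\cdot)$ by a factor of at most $2$, so
\[\iota(\Sigma_0,R_j)/\iota(\Sigma_0,R_i)\le 2^N.\]
Comparing these two bounds gives $N\ge\tfrac{\log_2(1/a)}{k}(j-i)-\const(a,k)$, which together with the Lipschitz direction completes the quasi-geodesic estimate.

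These estimates require $\iota(\Sigma_0,R_i)>0$, which is automatic: $R_0$ is dual to $\Sigma_0$, so $\iota(\Sigma_0,R_0)>0$, and Lemma \ref{nielsentwist} prevents the positive quantity from dropping to zero along the bounded sequence of Nielsen moves that connect consecutive $R_i,R_{i+1}$. The main obstacle I foresee is the bookkeeping of constants, namely absorbing the residual-gap factor $C_2$ and the floor-function losses into the additive constant of the final quasi-geodesic inequality; the geometric content is entirely the interplay between factor-$2$ control per Nielsen move (Lemma \ref{nielsentwist}) and factor-$a^{-1}$ growth per $k$ surgery steps (the exponential-growth hypothesis), and the remaining task is to argue that these two competing rates combine to give the desired linear lower bound on distance.
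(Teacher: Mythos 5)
Your argument is correct, and it takes a genuinely different route from the paper. The paper proves the theorem by constructing a coarse $L$-Lipschitz retraction $P$ of the whole graph ${\cal R}$ onto the sequence $(R_i)$: to a marked rose $G$ it assigns a ``roughly balanced'' index $i$ for which $\log\bigl(\iota(\Sigma,R_i)/\iota(\Lambda,R_i)\bigr)$ matches $\log\bigl(\iota(\Sigma,G)/\iota(\Lambda,G)\bigr)$ up to the constant of Lemma \ref{lowerbound}; coarse well-definedness and the Lipschitz property of $P$ come from the exponential growth hypothesis together with Lemma \ref{nielsentwist}, and the quasi-geodesic property follows from the standard fact that Lipschitz retracts of geodesic graphs are (weakly) quasi-convex. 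You instead run the classical ``progress function'' argument: the exponential growth hypothesis forces $\iota(\Sigma_0,R_\cdot)$ to grow at rate at least $a^{-1/k}$ per surgery step, while Lemma \ref{nielsentwist} caps the rate of change of $\iota(\Sigma_0,\cdot)$ along any edge path in ${\cal R}$ at a factor of $2$ per Nielsen move (permutations change nothing), and comparing the two rates along a geodesic from $R_i$ to $R_j$ gives the linear lower bound on $d_{\cal R}(R_i,R_j)$; the Lipschitz upper bound is the uniform bound on the number of Nielsen moves between consecutive dual roses already recorded in the proof of Lemma \ref{lowerbound}. Your route is more elementary and in fact uses only one of the two inequalities in Definition \ref{def:exponentialgrowth} (growth of intersection with $\Sigma_0$), so it proves a slightly stronger statement; what the paper's retraction buys in exchange is quasi-convexity of the image of the sequence inside ${\cal R}$, i.e.\ stability information that is exploited later (for instance in the proof of Proposition \ref{casen=2}), not just the parameterized quasi-geodesic estimate. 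Your handling of the residual gap of fewer than $k$ steps and of the positivity of $\iota(\Sigma_0,R_i)$ (which in fact is at least $1$ for any $\pi_1$-surjective rose, since the complement of a simple sphere system is a union of balls) is sound.
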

\begin{proof} For a number $L>1$, an \emph{$L$-Lipschitz retraction} of the 
graph ${\cal R}$ onto a subset $A\subset {\cal R}$ is an 
$L$-Lipschitz map $\Upsilon:{\cal R}\to A$ such that
$d(x,\Upsilon(x))\leq L$ for all $x\in A$. 
If there exists an $L$-Lipschitz retraction ${\cal R}\to A$ then since ${\cal R}$ 
is a geodesic metric graph, 
the inclusion $A\to {\cal R}$ is \emph{weakly $L$-quasi-convex:}
For any two points $x,y\in A$ there exists a path in the $L$-neighborhood of 
$A$ with the same endpoints which is an $L$-quasi-geodesic in 
${\cal R}$ (with additive constant larger than $L$).

As a consequence, it suffices to show that there is an  
$L$-Lipschitz
retraction of ${\cal R}$ onto a sequence $(R_i)_{0\leq i\leq m}$
of roses dual to the sphere systems $\Sigma_i$ for a constant $L>1$ only 
depending on $a,k$ (and, of course, the rank $n$).

Let $G\in {\cal R}$ be a marked rose. We assume that $G$ is 
embedded in $M$.
Let $\kappa=\log\frac{\iota(\Sigma,G)}{\iota(\Lambda,G)}$.
We say that $P(G)=R_i$ is 
\emph{roughly balanced} for $G$ if 
\[\log \frac{\iota(\Sigma,R_i)}{\iota(\Lambda,R_i)}
  \in [\kappa +\log C_1,\kappa- \log C_1 ]\]
where $C_1\in (0,1)$ is as in Lemma \ref{lowerbound}.
If $\kappa<\log\frac{\iota(\Sigma,R_0)}{\iota(\Lambda,R_0)}$ then we put
$P(G)=\Sigma$, and similarly we put $P(G)=\Lambda$ if 
$\kappa>\log \frac{\iota(\Sigma,R_m)}{\iota(\Lambda,R_m)}$. 
By Lemma \ref{lowerbound} and the choice of the constant
$C_1>0$, such a number $i$ exists, and  
Definition \ref{def:exponentialgrowth} yields that
it is coarsely unique:  
If $R_j$ is another such point then
$\vert j-i\vert \leq k \log\kappa/\log a$.

Now let us assume that 
$G^\prime$ is obtained from $G$ by a single
Nielsen twist. Then Lemma \ref{nielsentwist} shows that
\[\left\vert \log \frac{\iota (\Sigma,G^\prime)}{\iota(\Lambda,G^\prime)}-
\log \frac{\iota(\Sigma,G)}{\iota(\Lambda,G)} \right\vert \leq 2 \log 2.\]
Thus as a consequence of $(a,k)$-exponential growth, 
we obtain that 
the intrinsic distance between $P(G)$ and 
$P(G^\prime)$ is at most $L$ where 
$L=L(a,k)>0$ is a universal constant. 
In other words, the map $P$ is coarsely $L$-Lipschitz.

Now if $G$ is dual to one of the sphere
systems $\Sigma_i$ then it follows from 
the construction that $P(G)$ is contained
in a uniformly bounded neighborhood of 
$\Sigma_i$. As a consequence, $P$ is indeed a Lipschitz retraction. 
The lemma follows.
\end{proof}



While Lemma \ref{nielsentwist} shows that intersection numbers
change at most exponentially with a fixed rate along
a one-Lipschitz path in the graph ${\cal R}$,
the next observation yields that the distance in ${\cal S\cal G}_n$
yields a lower bound on intersection numbers.

\begin{lemma}\label{intersect}
Let $S\subset M$ be a sphere and
let $R\subset M$ be an embedded rose with
$n$ petals and vertex $p$ such that the inclusion
$R\to M$ defines an isomorphism of $\pi_1(R,p)\to \pi_1(M,p)$.  
Let $S^\prime\subset M$ be a sphere which intersects $R$ in 
precisely one point; 
then 
\[d_{\cal S\cal G}(S,S^\prime)\leq 2\log_2 \iota(S,R)+3.\]
\end{lemma}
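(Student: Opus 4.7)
I would prove the lemma by induction on $k=\iota(S,R)$, using a surgery halving argument with respect to a sphere system dual to $R$.

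\emph{Preliminaries and base case.} Fix a simple sphere system $\Sigma=\{\sigma_1,\ldots,\sigma_n\}$ dual to $R$, i.e., $\sigma_i$ meets $R$ transversely in exactly one point on the $i$-th petal and is disjoint from the other petals; the existence of such a $\Sigma$ for any rose whose inclusion induces an isomorphism on $\pi_1$ is a standard fact. I first observe that any essential sphere in $M$ must meet $R$: if $T$ is a non-separating essential sphere disjoint from $R$, then $\pi_1(R)\to\pi_1(M\setminus T)\to\pi_1(M)$ composes to the identity of $F_n$, forcing $\pi_1(M\setminus T)=F_{n-1}$ to surject onto $F_n$, which is impossible on rank grounds; if $T$ is separating and disjoint from $R$, then one complementary component has trivial $\pi_1$ and sphere boundary, hence is a ball, contradicting essentiality. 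The same argument applied to $\Sigma$ yields $S\cap\Sigma\neq\emptyset$. In particular $k\geq 1$, and the base case $k=1$ is handled directly: any two essential spheres meeting $R$ in a single point can each be completed to a simple sphere system dual to $R$, and two dual sphere systems of a common rose can be linked in ${\cal S\cal G}_n$ by a path of length at most three by swapping dual spheres one at a time, so $d(S,S')\leq 3$.

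\emph{Inductive halving.} Assume $k\geq 2$ and put $S$ in minimal position with $\Sigma$. Then $S\cap\Sigma$ is a nonempty collection of disjoint circles decomposing $S$ into $m\geq 2$ disk regions $D_1,\ldots,D_m$. These disks are the faces of a planar arrangement on $S$ whose dual adjacency graph (vertices = regions, edges = circles) is a tree; innermost regions (those with a single boundary circle) are exactly the leaves of this tree, and since a tree with $\geq 2$ vertices has $\geq 2$ leaves, there are at least two innermost regions. Because the $R$-intersection points are partitioned among the regions, $\sum_j|D_j\cap R|=k$, a pigeonhole argument over the at-least-two innermost regions produces an innermost disk $D$ with $|D\cap R|\leq k/2$.

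\emph{Surgery and conclusion.} Let $\sigma\in\Sigma$ be the sphere containing $\partial D$, and let $\sigma_1,\sigma_2$ be the two disks into which $\partial D$ splits $\sigma$. The standard sphere surgery along $D$ produces the spheres $\sigma'=D\cup\sigma_1$ and $\sigma''=D\cup\sigma_2$; after pushing parallel copies of $D$ off to opposite sides of $S$, both are disjoint from $S$, and by the essentiality argument following Lemma 3.1 of \cite{HV96} at least one of them, call it $T$, is essential in $M$. Since $\sigma\cap R$ is a single point lying in exactly one of $\sigma_1,\sigma_2$, we obtain $\iota(T,R)\leq |D\cap R|+1\leq k/2+1$. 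Combined with $d(S,T)\leq 1$ and the inductive hypothesis $d(T,S')\leq 2\log_2(k/2+1)+3$, this gives $d(S,S')\leq 1+2\log_2(k/2+1)+3\leq 2\log_2 k+3$ for $k$ sufficiently large, with the few remaining small values of $k$ absorbed into an enlarged base case. The main technical obstacle is managing the additive $+1$ in the halving: it appears precisely when the only essential surgered sphere is the one carrying the intersection $\sigma\cap R$, weakening the recursion slightly and requiring either careful constant tracking through an expanded base case or exploiting the availability of the second innermost disk (and its associated surgery) when the preferred side yields an inessential sphere.
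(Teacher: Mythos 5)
There is a genuine gap in the inductive step, at the claim $d_{\cal S\cal G}(S,T)\leq 1$. You surger the dual sphere $\sigma\in\Sigma$ along an innermost disk $D$ of $S-\Sigma$, obtaining $T=D\cup\sigma_i$. Such a surgered sphere is disjoint from $\Sigma$ (this is the content of Lemma 3.1 of [HV96]: the surgery reduces the intersection of the \emph{system} with the direction sphere), but it is \emph{not} in general disjoint from $S$: the disk $\sigma_i\subset\sigma$ may contain in its interior further circles of $S\cap\sigma$ besides $\partial D$, and in minimal position these are essential intersections of $T$ with $S$ that cannot be pushed off. (Concretely: if $S\cap\sigma$ consists of three circles, at least one of the two disks $\sigma_1,\sigma_2$ bounded by $\partial D$ contains another intersection circle.) Since $T$ need not be a neighbor of $S$ in the sphere graph, the recursion $d(S,S')\leq 1+d(T,S')$ collapses, and with it the whole halving argument. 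The fix is to surger in the other direction, which is what the paper does: take an innermost disk $D$ of $S'-S$ (a disk \emph{in $S'$} with boundary on $S$) that avoids the single point $R\cap S'$ — such a disk exists because there are at least two innermost regions of $S'-S$ — and replace $S$ by $S_1=D_1\cup D$, where $D_1$ is the half of $S$ cut off by $\partial D$ with the fewer $R$-intersections. This $S_1$ \emph{is} disjoint from $S$, and $\iota(S_1,R)\leq \iota(S,R)/2$ with no additive error, because $D\subset S'$ is disjoint from $R$ by the choice above; here the hypothesis $\iota(S',R)=1$ is used in every step, not only at the bottom of the induction. Your version never uses $S'$ in the inductive step, which is a symptom of the problem.

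Two further points would need attention even if the surgery direction were corrected. First, your base case rests on two unproved claims: that every essential sphere meeting $R$ in one point extends to a sphere system dual to $R$, and that any two dual systems of a common rose are within distance $3$ "by swapping spheres"; neither is obvious (a sphere meeting a petal once is not determined up to isotopy by that property, as the two complementary arcs of the petal can wind arbitrarily), and the second claim is essentially the $k=1$ case of the lemma itself. In the paper's setup the base case is immediate: if $S$ and $S'$ intersect and $\iota(S,R)=1$, minimal position forces a contradiction (one obtains an essential sphere disjoint from $R$), so $S,S'$ are disjoint. Second, the additive $+1$ in $\iota(T,R)\leq k/2+1$ makes your recursion stall at $k=2$ (where $T$ may again satisfy $\iota(T,R)=2$) and fail to close until $k\geq 5$; the "enlarged base case" you defer to therefore has to contain a genuine separate argument for $k=2,3,4$, which you do not supply.
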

\begin{proof} If $S^\prime$ is disjoint from $S$ then 
\[d_{\cal S\cal G}(S,S^\prime)=1\] and 
there is nothing
to show. Thus assume that
$S^\prime,S$ intersect and that 
$R$ intersects $S$ in $k\geq 1$ points. 

There are at least two innermost components of $S^\prime-S$. 
Up to homotopy of $R$, we may assume that  the intersection 
point between $R$ and $S^\prime$ is contained in one of these components, 
say the component $D^\prime$. 
Let $D$ be an innermost component of $S^\prime-S$ different from
$D^\prime$; its
boundary $\partial D$ decomposes $S$ into two disks 
$D_1,D_2$. Assume by renaming that the disk $D_1$ has fewer intersections
with $R$ than $D_2$. Then $R$ intersects $D_1$ in at most 
$k/2$ points.

Surger $S$ at $D$ so that the surgered sphere $S_1$ is the union
$D_1\cup D$. Then $\iota(S_1,R)\leq k/2$.
Note that $d_{{\cal S\cal G}}(S,S_1)\leq 1$ since 
$S,S_1$ are disjoint. 
The lemma now follows by induction on the length of a surgery 
sequence connecting $S$ to a sphere disjoint from $S^\prime$.
\end{proof}

Recall the coarsely well defined map $\Theta$
which associates to a  simple sphere system one of its components. 
For a number $B>1$, define two reduced sphere systems
$\Sigma_0,\Sigma_1$ to be  
in \emph{$B$-tight position} if 
$Bd_{\cal S\cal G}(\Theta(\Sigma_0),\Theta(\Sigma_1))\geq d_{\cal S\cal S\cal G}(\Sigma_0,\Sigma_1)$.

\begin{corollary}\label{intersectdetect}
For every $B>1$ there is a number $a=a(B)>0$ with the following 
property.
Let $\Sigma_0,\Sigma_1$ be two reduced 
sphere systems which are in $B$-tight position. 
Let $R_1$ be a rose dual to 
$S_1$; then 
\[d_{\cal S\cal S\cal G}(\Sigma_0,\Sigma_1)\in [\log_2 \iota(\Sigma_0,R_1)/a,
a\log_2\iota(\Sigma_0,R_1)].\] 
\end{corollary}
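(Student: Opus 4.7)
The plan is to prove the two inequalities independently, since they rely on different tools from the earlier material. For the lower bound $d_{\cal S\cal S\cal G}(\Sigma_0,\Sigma_1) \geq \log_2 \iota(\Sigma_0,R_1)/a$, I would simply invoke Lemma \ref{intersectwo}: since $R_1$ is a rose dual to the reduced sphere system $\Sigma_1$ and $\Sigma_0$ is also reduced, that lemma yields
\[d_{\cal S\cal S\cal G}(\Sigma_0,\Sigma_1) \geq C_0 \log_2 \iota(\Sigma_0,R_1),\]
which works for any $a \geq 1/C_0$. Note this half of the statement does not use the $B$-tight position hypothesis at all.

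For the upper bound, I would begin with the defining inequality of $B$-tight position to replace the sphere-system distance by a sphere-graph distance:
\[d_{\cal S\cal S\cal G}(\Sigma_0,\Sigma_1) \leq B\, d_{\cal S\cal G}\bigl(\Theta(\Sigma_0),\Theta(\Sigma_1)\bigr).\]
Writing $S_0=\Theta(\Sigma_0)$ and $S_1^\prime=\Theta(\Sigma_1)$, the decisive point is that $S_1^\prime$ is a component of $\Sigma_1$ while $R_1$ is dual to $\Sigma_1$, so $S_1^\prime$ intersects $R_1$ in precisely one point. This places the triple $(S_0, R_1, S_1^\prime)$ in the exact setup of Lemma \ref{intersect}, which gives
\[d_{\cal S\cal G}(S_0, S_1^\prime) \leq 2\log_2 \iota(S_0, R_1) + 3 \leq 2\log_2 \iota(\Sigma_0, R_1) + 3,\]
where the second inequality uses that $S_0$ is a component of $\Sigma_0$ and so contributes only a subset of the intersections with $R_1$. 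Combining the two steps produces
\[d_{\cal S\cal S\cal G}(\Sigma_0,\Sigma_1) \leq B\bigl(2\log_2 \iota(\Sigma_0,R_1)+3\bigr).\]

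The constant $a=a(B)$ is then chosen large enough to swallow both $1/C_0$ and the factor $B$ together with the additive $3$. The additive constant is absorbed into the multiplicative estimate once $\iota(\Sigma_0,R_1)\geq 2$; for the residual small values of $\iota$, the displayed inequality already gives $d_{\cal S\cal S\cal G}(\Sigma_0,\Sigma_1) \leq 9B$, making the statement trivial after enlarging $a$ accordingly. I do not expect any real obstacle: the corollary is essentially a repackaging of Lemma \ref{intersectwo}, Lemma \ref{intersect}, and the definition of tight position, and the only mild subtlety is the bookkeeping needed to convert the additive $3$ into a purely multiplicative constant.
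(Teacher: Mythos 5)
Your proof is correct and follows exactly the route the paper takes: the paper's own (two-line) argument is precisely to combine the defining inequality of $B$-tight position with Lemma \ref{intersectwo} for the lower bound and Lemma \ref{intersect} for the upper bound. Your version simply fills in the bookkeeping (including the observation that a component of $\Sigma_1$ meets the dual rose $R_1$ in one point, which is what makes Lemma \ref{intersect} applicable) that the paper leaves implicit.
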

\begin{proof} Since $\Sigma_0,\Sigma_1$ are in $B$-tight position, 
we have 
\[d_{\cal S\cal S\cal G}(\Sigma_0,\Sigma_1)\leq 
Bd_{\cal S\cal G}(\Theta(\Sigma_0),\Theta(\Sigma_1)).\]
Thus the corollary follows from Lemma \ref{intersect} and
Lemma \ref{intersectwo}.
\end{proof}

\section{Growth and quasigeodesics}\label{sec:spontaneous}

The goal of this section is to give some additional geometric
information on surgery sequences in relation to growth. 
We begin with a detailed analysis of the case of rank $2$.

%
%
In this section we only consider particular surgery sequences called
\emph{full surgery sequences}, 
defined by the
property that we always use all spheres (and remove multiple
copies). That is, we replace a sphere by both spheres obtained from
surgery at a fixed innermost disk.

Recall the map $\Theta:{\cal S \cal S\cal G}_n\to {\cal S\cal G}_n$ which 
associates to a simple sphere system one of its components.
In the statement of the following 
proposition, exponential growth means 
$(a,k)$-exponential growth for some $a>0,k>0$. The constants 
depend on each other, but we do not make this dependence quantitative.

\begin{proposition}\label{casen=2}
For the free group of rank $n=2$ 
and a full surgery sequence $\Sigma_i\subset {\cal S\cal S\cal G}_2$ of simple
sphere systems the following are equivalent.
\begin{enumerate}
\item $\Sigma_i$ is of exponential growth.
\item The image sequence $\Theta(\Sigma_i)$ is a parameterized
quasi-geodesic in ${\cal S\cal G}_2$. 
\end{enumerate}
\end{proposition}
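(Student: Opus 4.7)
The approach rests on the observation, peculiar to rank two, that
$\Theta:{\cal S\cal S\cal G}_2\to{\cal S\cal G}_2$ is a quasi-isometry. I would establish this first: by Example~\ref{fareygraph} the graph ${\cal N\cal S\cal G}_2$ is the Farey graph, on which ${\rm Out}(F_2)={\rm GL}(2,\mathbb{Z})$ acts with finite vertex stabilizers and cofinitely many orbits. Together with Proposition~\ref{quasiisometric} this exhibits ${\cal S\cal G}_2$ as a geometric model for ${\rm Out}(F_2)$. Since ${\cal S\cal S\cal G}_2$ is likewise a geometric model (Laudenbach), any coarsely equivariant coarsely Lipschitz map between the two is a quasi-isometry, and $\Theta$ is such a map.

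For $(1)\Rightarrow(2)$, I would assume $(\Sigma_i)$ has $(a,k)$-exponential growth and apply Theorem~\ref{quasigeo2} to conclude that the dual roses $(R_i)$ form an $\ell(a,k)$-quasi-geodesic in ${\cal R}$. Composing the quasi-isometries ${\cal R}\simeq{\cal S\cal S\cal G}_2\xrightarrow{\Theta}{\cal S\cal G}_2$ yields $d_{\cal S\cal G}(\Theta(\Sigma_i),\Theta(\Sigma_j))\asymp |i-j|$; since consecutive terms are at bounded distance, this is the parameterized quasi-geodesic estimate required.

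For $(2)\Rightarrow(1)$, I would assume $(\Theta(\Sigma_i))$ is a parameterized $L$-quasi-geodesic in ${\cal S\cal G}_2$ and transport this back through $\Theta$ to a parameterized quasi-geodesic $(\Sigma_i)$ in ${\cal S\cal S\cal G}_2$. In particular every pair $\Sigma_i,\Sigma_j$ will sit in $B$-tight position for a single constant $B=B(L)$, so Corollary~\ref{intersectdetect}, applied with $R_j$ a rose dual to $\Sigma_j$, will give
\[\log_2\iota(\Sigma_i,R_j)\asymp d_{\cal S\cal S\cal G}(\Sigma_i,\Sigma_j)\asymp |i-j|.\]
Setting $i=0$ (respectively $i=m$) then extracts constants $a\in(0,1)$ and $k\geq 1$ realizing the definition of exponential growth with respect to $\Sigma_0$ and $\Sigma_m$ simultaneously.

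The main obstacle will be the preliminary quasi-isometry statement: one has to verify cofiniteness of the ${\rm GL}(2,\mathbb{Z})$-action on the Farey graph, finiteness of its stabilizers, and that $\Theta$, being coarsely equivariant with bounded image-diameter on a fundamental domain, agrees up to bounded error with an orbit map for this action. Once this is in place the two implications reduce to assembling Theorem~\ref{quasigeo2} and Corollary~\ref{intersectdetect}, with no further case analysis required; the hypothesis that the surgery sequences be full plays no role beyond ensuring that consecutive $\Sigma_i,\Sigma_{i+1}$ are at uniformly bounded ${\cal S\cal S\cal G}_2$-distance.
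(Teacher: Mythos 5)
Your argument has a genuine gap at its foundation: the claim that $\Theta:{\cal S\cal S\cal G}_2\to {\cal S\cal G}_2$ is a quasi-isometry is false. While ${\cal S\cal S\cal G}_2$ is indeed a geometric model for ${\rm Out}(F_2)={\rm GL}(2,\mathbb{Z})$, the Farey graph is not: the vertex stabilizers of the action of ${\rm GL}(2,\mathbb{Z})$ on the Farey graph are \emph{infinite} (virtually cyclic, generated by a Nielsen twist about the corresponding primitive element), so the action is not proper and the orbit map is very far from a quasi-isometry. The Farey graph is, up to quasi-isometry, the \emph{cone-off} of ${\rm PSL}(2,\mathbb{Z})$ along its parabolic subgroups ${\rm Stab}(H)$, and the orbit map collapses each such infinite cyclic coset to a bounded set. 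This is precisely why the proposition has content: a surgery sequence of exponential growth is a parameterized quasi-geodesic in ${\cal S\cal S\cal G}_2$ by Theorem \ref{quasigeo2}, but a quasi-geodesic in ${\rm Out}(F_2)$ can in principle spend an arbitrarily long time in a coset of a twist subgroup, and its $\Theta$-image then stalls in the Farey graph and fails to be a \emph{parameterized} quasi-geodesic. Your $(1)\Rightarrow(2)$ therefore does not go through as written, and the "main obstacle" you flag cannot be resolved in the way you propose, since cofiniteness and finiteness of stabilizers simply fail.

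The paper's proof of $(1)\Rightarrow(2)$ is exactly the argument needed to exclude these long excursions: it uses the relatively hyperbolic structure of ${\rm PSL}(2,\mathbb{Z})$ with respect to the horoball stabilizers, identifies $\mathbb{H}^2$ minus the horoballs with the thick part of Teichm\"uller space of the once-punctured torus, and shows via the expansion/contraction of flat lengths along Teichm\"uller geodesics that $(a,k)$-exponential growth forces every excursion of the associated geodesic into a horoball to have uniformly bounded length (a segment of length $p$ along $\partial H$ corresponds to a word of length roughly $e^{p}$ in ${\rm Stab}(H)$, which would violate exponential growth). Your $(2)\Rightarrow(1)$ direction is closer to viable, since it only uses that $\Theta$ is coarsely Lipschitz together with Corollary \ref{intersectdetect}; the paper handles this direction more directly via Lemma \ref{intersect}, which bounds sphere-graph distance by $\log_2$ of intersection numbers. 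But as it stands, the forward implication --- the substantive half of the proposition --- is not proved by your argument.
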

\begin{proof} Since  
(2) implies (1) by Lemma \ref{intersect} (and in fact, this implication holds
true for any $n\geq 2$), it suffices to show
that (1) implies (2).

We observed in Example \ref{fareygraph} that up to 
uniform quasi-isometry, the graph ${\cal S\cal G}_2$ can be identified
with the Farey graph, where this identification is via viewing the free group 
$F_2$ as the fundamental group of a once punctured torus $T$ and 
viewing the Farey graph as the curve graph of $T$. 

Furthermore, we have ${\rm Out}(F_2)={\rm GL}(2,\mathbb{Z})$, which is a hyperbolic
group with respect to some (and hence any) finite symmetric generating set. 
Thus any uniform (that is, with fixed constants) 
quasi-geodesic $\gamma$ in ${\rm Out}(F_2)$ is \emph{stable}:
Any other uniform quasi-geodesic with the same endpoints is contained 
in a uniformly bounded neighborhood of $\gamma$. Since the surgery sequence 
$\Sigma_i$ is of $(a,k)$-exponential growth by assumption, 
Theorem \ref{quasigeo2} shows that it determines a quasi-geodesic in ${\rm GL}(2,\mathbb{Z})$
and hence it is at uniformly bounded distance from a geodesic.

To understand the relation between the geometry of ${\rm Out}(F_2)$ and the geometry of 
the Farey graph we first pass to the quotient ${\rm PSL}(2,\mathbb{Z})$ of the index two subgroup
${\rm SL}(2,\mathbb{Z})$ of $GL(2,\mathbb{Z})$, with fiber of order $2$. 
It acts as a group of isometries on the hyperbolic 
plane $\mathbb{H}^2$.  The quotient of $\mathbb{H}^2$ by this action 
is a finite volume orbifold with one cusp.  There exists 
a ${\rm PSL}(2,\mathbb{Z})$-invariant collection ${\cal H}$ of open \emph{horoballs} with pairwise disjoint
closure which are
centered at the rational numbers 
and $\infty$ in $\partial \mathbb{H}^2=\mathbb{R}\cup \infty$ (here we use the upper half-plane model
for $\mathbb{H}^2$ and the natural identification of its Gromov boundary $\partial \mathbb{H}^2$ with
$\mathbb{R}\cup \infty$). 
This system of
horoballs is precisely invariant under the action of the group 
${\rm PSL}(2,\mathbb{Z})$: if $H\in {\cal H} $ is such a horoball, and if $g\in {\rm PSL}(2,\mathbb{Z})$
is such that $gH\cap H\not=\emptyset$, then $gH=H$. Furthermore, the action of 
${\rm PSL}(2,\mathbb{Z})$ on ${\cal H}$ is transitive. 
Up to adjusting the system ${\cal H}$,
the complement $X=\mathbb{H}^2-{\cal H}$ 
is a path connected two-dimensional space on which ${\rm PSL}(2,\mathbb{Z})$ 
acts properly and cocompactly. 

Let ${\rm Stab}(H)\subset {\rm PSL}(2,\mathbb{Z})$ be the stabilizer of a component 
$H\in {\cal H}$. Then ${\rm Stab}(H)$ is virtually infinite cyclic, and the hyperbolic group 
${\rm PSL}(2,\mathbb{Z})$ is hyperbolic relative to its system of pairwise conjugate parabolic subgroups
${\rm Stab}(H)$ $(H\in {\cal H})$.
Up to quasi-isometry, 
the Farey graph is then obtained by adding for each $H\in {\cal H}$ a point 
to the Cayley graph of ${\rm PSL}(2,\mathbb{Z})$ and connecting this point to each element in 
${\rm Stab}(H)$ by an edge of length one. Thus a (uniform) quasi-geodesic in ${\rm PSL}(2,\mathbb{Z})$ projects to a 
uniform quasi-geodesic in the Farey graph if and only the length of any subsegment 
which is contained 
in a uniform neighborhood of ${\rm Stab}(H)$ for some $H\in {\cal H}$ is uniformly bounded.

View $\mathbb{H}^2$ as the Teichm\"uller space of marked punctured tori equipped with 
a finite volume hyperbolic metric. Then $X\subset \mathbb{H}^2$ parameterizes such marked tori whose
\emph{systole}, that is, the length of a shortest closed geodesic, is bounded from below
by universal positive constant $\epsilon >0$.

Choose a 
basepoint $x\in X$, and a rose $R\subset x$ with two petals such that the inclusion 
$R\to x$ is an isomorphism on $\pi_1$ and that the $x$-length of $R$ is uniformly bounded.
Since the systole of $x\in X$ is at least $\epsilon$, 
such a rose exists, and it is essentially unique: If $a_1,a_2$ is a free basis of $F_2$ defined
by the petals of the rose, then any other such free  
basis of $F_2$ 
can be obtained from $a_1,a_2$ by a uniformly bounded number of 
Nielsen moves.

Let $\gamma:[0,u]\to {\rm PSL}(2,\mathbb{Z})$ be a uniform quasi-geodesic
through $\gamma(0)={\rm Id}$. Then 
$\gamma$ projects to a uniform quasi-geodesic in the 
Farey graph if and only if for a number $m>0$ depending on the control constants
for the quasi-geodesic, 
the geodesic in $\mathbb{H}^2$ connecting
$x$ to $\gamma(u)(x)$ does not contain any segment of length at least $m$ which is 
contained in $\mathbb{H}^2-X$. Note that this makes sense since each horoball
$H\in {\cal H}$ is convex.

We are left with showing that 
this property is equivalent to $(a,k)$-expo\-nen\-tial growth for some 
$a,k>0$. To this end put $\psi=\gamma(u)$ and consider the unit speed
Teichm\"uller  geodesic segment
$\eta:[0,\tau]\to \mathbb{H}^2$ connecting $x$ to $\psi(x)$, which is 
just the unit speed hyperbolic geodesic. Its length $\tau$ is given as follows. 

Extend $\eta$ to a Teichm\"uller geodesic line, again denoted by $\eta$. 
Its endpoints 
$\eta_+,\eta_-$ 
in $\partial \mathbb{H}^2$ can be thought of as measured geodesic laminations
on the once punctured torus $T$. For $t\in [0,\tau]$ let $q(t)$ be the 
area one singular euclidean
metric on $T$ defined by the area one quadratic differential
which is the cotangent vector of $\eta$ at $\eta(t)$. 
The \emph{length} of $\eta_+$  with respect to $q(t)$ 
contracts along the geodesic
with the contraction rate $e^{-u/2}$, and the length of $\eta_-$ expands with the rate 
$e^{u/2}$. 

For points in $X$,  the singular
euclidean metric defined by an area one quadratic differential is uniformly 
bi-Lipschitz equivalent to the hyperbolic metric in the complement of 
the cusp. The singular euclidean 
length of a simple closed curve $\alpha$ on $x$ (that is, the length of 
a geodesic representative) equals $\iota(\alpha,\eta_+)+\iota(\alpha,\eta_-)$ where
$\iota$ is the \emph{intersection form} on measured lamination space. 
Thus for any subsegment of $\eta$ of hyperbolic length $\kappa$ and with endpoints in 
$X$, the flat length of 
the basis elements $a_1,a_2$ of $F_2$ 
with respect to the hyperbolic metric has increased by at most the factor $e^{\kappa/2}$. 
As for points $z\in X$, this flat length is 
uniformly proportional to the length of the corresponding word with respect to 
a free basis determined by a rose of uniformly bounded length in $z$, 
 $(a,k)$-exponential growth of the path $\gamma$ implies the following.
 There exists a number $c>0$ such that for any $0\leq a< b\leq u$ the length of 
 the hyperbolic geodesic connecting $\gamma(a)(x)$ to $\gamma(b)(x)$ is at least 
 $c(b-a)$.

Now let $\zeta:[0,p]\to \mathbb{H}^2$ be a geodesic arc of length $p>0$
connecting two points on 
the boundary of a horoball $H\in {\cal H}$. Since close-by points in $X$ define
hyperbolic tori which are marked uniformly bi-Lipschitz, we may assume that 
the endpoints of  $\zeta$ are contained in  
the same ${\rm PSL}(2,\mathbb{Z})$-orbit.
This means that there exists an element $\sigma\in {\rm Stab}(H)$ with 
$\sigma(\zeta(0))=\zeta(p)$. Let $\ell >0$ be the word norm of 
$\sigma$ in the infinite cyclic group ${\rm Stab}(H)$. 
Note that this word norm is uniformly proportional to the word norm in 
${\rm PSL}(2,\mathbb{Z})$. Then the 
length $p$ of $\zeta$ is bounded from above by
$b\log \ell+b$ where $b>0$ is a universal constant. As a consequence,
for large enough $\ell$ the condition of $(a,k)$-exponential growth
is violated. In other words, $(a,k)$-exponential growth implies 
property (2) stated in the proposition, which is what we wanted to show.
\end{proof}

We next give an example which shows that for $n\geq 3$, a surgery sequence
which violates the exponential growth condition 
in Theorem \ref{quasigeo2} does not define  in general a uniform 
quasi-geodesic in ${\rm Out}(F_n)$. We use the following preparation.

\begin{lemma}\label{fullcontractsinter}
Let $\Sigma_0,\Sigma$ be simple sphere systems 
and let $\Sigma_i$ be a full surgery sequence of 
$\Sigma_0$ towards $\Sigma$. Let $R\subset M$ be an embedded
rose with $m\leq n$ petals 
such that the inclusion $R\to M$ defines an injection on $\pi_1$ and that
$\iota(R,\Sigma)=m$. 
Then for each $i$ we have
$\iota(\Sigma_i,R)\leq \iota(\Sigma_0,R)+2i$.
\end{lemma}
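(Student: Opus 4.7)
The plan is to induct on $i$, with the inductive step showing $\iota(\Sigma_{i+1}, R) \leq \iota(\Sigma_i, R) + 2$. The strategy is to fix a realization of $R$ that is in minimal position with respect to $\Sigma$, track the effect of a single full surgery move on $|R \cap \Sigma_i|$ using this realization, and use the hypothesis $\iota(R,\Sigma) = m$ to bound the contribution of the surgery disk.

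With $R$ realized so that $|R \cap \Sigma| = m$, each of the $m$ petals of $R$ meets $\Sigma$ in exactly one point, since each petal represents a non-trivial loop and $M - \Sigma$ is a disjoint union of balls. I would then argue that the condition $\iota(R,\Sigma) = m$ forces distinct petals to cross distinct spheres of $\Sigma$: interpreting $\iota(R,\Sigma)$ as a sum of word lengths in a free basis compatible with $\Sigma$, each petal must be a distinct basis element (up to sign), so in particular $|R \cap S^\prime| \leq 1$ for every $S^\prime \in \Sigma$.

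Next, I would analyze the full surgery move itself. It replaces a sphere $S_0 \in \Sigma_i$ by the two spheres $S_0^+ = D \cup D_1$ and $S_0^- = D \cup D_2$, where $D$ is an innermost disk in some $S^\prime \in \Sigma$ with $\partial D \subset S_0$ and $D_1, D_2$ are the two components of $S_0 - \partial D$. Putting $R$ in general position so that $\partial D \cap R = \emptyset$, a direct count gives
\[|R \cap S_0^+| + |R \cap S_0^-| = |R \cap D_1| + |R \cap D_2| + 2|R \cap D| = |R \cap S_0| + 2|R \cap D|,\]
and removing parallel copies can only decrease this. Since $D \subset S^\prime$ and the previous step gives $|R \cap S^\prime| \leq 1$, we obtain $|R \cap D| \leq 1$, hence $|R \cap \Sigma_{i+1}| \leq |R \cap \Sigma_i| + 2$. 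Iterating, with the realization of $R$ chosen to be simultaneously in minimal position with respect to $\Sigma_0$ (a standard simultaneous minimal position argument for sphere systems), yields $\iota(\Sigma_i, R) \leq \iota(\Sigma_0, R) + 2i$.

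The main obstacle is justifying that distinct petals cross distinct spheres of $\Sigma$. This rests on a word-length characterization of $\iota(R,\Sigma)$, which is straightforward for reduced $\Sigma$ via the dual rose but requires additional care when $\Sigma$ is merely simple; in that case the appropriate notion of a compatible "basis" is the graph-of-groups structure on $F_n$ dual to $\Sigma$, and one must verify that the minimal intersection of each based petal still equals its combinatorial length in this structure so that the assumption $\iota(R,\Sigma) = m$ pins down each petal as a single generator.
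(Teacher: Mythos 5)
Your proof is correct and follows essentially the same route as the paper's: put $R$ in minimal position with respect to $\Sigma$ so that it meets each component of $\Sigma$ in at most one point, conclude that the innermost surgery disk $D\subset S'\in\Sigma$ meets $R$ at most once, and hence that each full surgery step adds at most two intersection points. The point you flag as the "main obstacle" — that distinct petals must cross distinct spheres of $\Sigma$ — is simply asserted without comment in the paper, and your graph-of-groups justification (each petal crosses $\Sigma$ once, hence corresponds to a loop edge at the vertex ball of the dual graph, and $\pi_1$-injectivity forces these loop edges to be distinct) is the correct way to fill that in.
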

\begin{proof}
Put the rose $R$ in minimal position with respect to $\Sigma$. This can be 
achieved in such a way that it intersects any component of $\Sigma$ in at most one
point. Let $S$ be a component of $\Sigma$ and let 
$D\subset S$ be an innermost disk for $S- \Sigma_i$ used in the 
surgery which transforms $\Sigma_i$ to $\Sigma_{i+1}$. Assume that the boundary
of $D$ is contained in the component $S_i$ of $\Sigma_i$. 
The disk $D$ has at most one
intersection point with $R$. As a consequence, the two spheres arising from surgery of 
$S_i$ with the disk $D$ intersect $R$ in at most 
$\iota(R, S_i)+2$ points. As the intersection of $R$ with the components of 
$\Sigma_i-S_i$ remains unchanged, a simple induction on 
the length of the surgery sequence yields the lemma.
\end{proof}

We use the lemma to find for any $n\geq 3$  surgery sequences in ${\cal S\cal S\cal G}_n$
which are not quasi-geodesics for an arbitrarily a priori chosen control constant. 
For simplicity of exposition, we only carry out the case 
$n=3$. It will be clear from the discussion that the construction is valid for any 
$n\geq 3$.

\begin{example}\label{noquasigeo}
Consider the free group $F_3$ with a free basis 
${\cal A}_0=\{a_1,a_2,a_3\}$. Let 
$R\subset M$ be a marked rose whose petals define these generators and let 
$\Sigma_0$ be the corresponding dual simple sphere system in $M=\sharp_3 S^1\times S^2$. 
Denote by $S_1\in \Sigma_0$ the sphere which intersects $a_1$. 

Choose a hyperbolic element $\alpha\in {\rm GL}(2,\mathbb{Z})={\rm Out}(F_2)$
and extend it to an element of ${\rm Out}(F_3)$ which preserves $a_1$ (up to a global conjugation).
Denote the thus defined element of 
${\rm Out}(F_3)$ again by $\alpha$. It preserves the conjugacy class of the one-edge free splitting 
$F_3=\langle a_1\rangle * F_2$ where $F_2\subset F_3$ is the free factor generated
by $a_2,a_3$. 
The element $\alpha$ 
acts on the sphere system graph, preserving the sphere $S_1$.
As the element $\alpha$ of ${\rm GL}(2,\mathbb{Z})$ is hyperbolic, it is of exponentially growth.
This is well known but also follows from the proof of Proposition \ref{casen=2}.
As the consequence, the intersection $\iota(\Sigma_0,\alpha^k(R))$ is uniformly exponentially
growing in $k$: there exists a number $c>0$ such that $\iota(\Sigma_0,\alpha^k(R))\geq e^{ck}$.
Furthermore, if $p_2$ denotes the petal of $R$ corresponding to 
the generator $a_2$, then 
the intersection of $\alpha^k(p_2)$ with $\Sigma_0$ is also uniformly exponentially growing.

For each $k$ consider the free basis 
${\cal A}_k=\{a_1\alpha^k(a_2), a_2,a_3\}$ of $F_3$. There exists 
a number $m>0$ and a path in ${\rm Out}(F_3)$ 
of length $2km+1$ which transforms the basis ${\cal A}_0$ to ${\cal A}_k$.
This path consists in first applying $k$ times 
the automorphism $\alpha$, which contributes $km$ to the length of the path. 
The image of ${\cal A}_0$ by this automorphism is the basis 
$a_1,\alpha^k(a_2),\alpha^k(a_3)$ (up to a global conjugation). 
Perform a Nielsen twist to replace 
$a_1$ by $a_1\alpha^k(a_2)$ and iterate $\alpha^{-1}$, extended to $F_3$ by fixing
the free splitting $F_3=\langle a_1\alpha^k(a_2)\rangle *F_2$.
The thus defined path has length $2km+1$, 
and its endpoint $\psi_k$ maps ${\cal A}_0$ to ${\cal A}_k$. Furthermore, 
we have that $\iota(\Sigma_0,\psi_k(R))$ equals $\iota(\Sigma_0,\alpha(p_2))$
up to a universal additive constant and hence these intersection numbers 
are growing exponentially in $k$.

Put $\Lambda_k=\psi_k(\Sigma_0)$. 
Consider the rose $\hat R$ with two petals, obtained from 
the rose $R$ by deleting the petal defining $a_1$. Then 
$\iota(\Sigma_0,\hat R)=\iota(\Lambda_k,\hat R)=2$ for all $k$.
Thus by Lemma \ref{fullcontractsinter}, if $\Sigma_i^k$ is a full surgery sequence connecting
$\Sigma_0$ to $\Lambda_k$, then for each $i$ we have
$\iota(\Sigma^k_i,\hat R)\leq 2i$.  By induction, this implies that 
$\iota(\Sigma^k_i,R)\leq (pi)^2$ for a universal constant $p>0$ and all $i$. Thus, surgery sequences from $\Sigma_0$ to $\Lambda_k$ 
have length growing exponentially in $k$.
As a consequence, the surgery paths do not define a family of uniform quasi-geodesics in 
${\rm Out}(F_3)$.  
\end{example}

\section{Submanifold projection}\label{subfactor}

Let $\sigma_0$ be a nonseparating sphere in
$M=M_n = \#_nS^1\times S^2$. The metric completion $\hat N$ of $M_n-\sigma_0$ with
respect to some path metric on $M_n$
is a compact manifold with two boundary components,
corresponding to the two sides of $\sigma_0$. The manifold $N$ 
obtained by gluing a $3$-ball to each boundary component of
$\hat N$  is homeomorphic to $M_{n-1}$.
Our goal is to analyze intersections of spheres with 
$\hat N$ and use this to define a \emph{submanifold projection} 
of the sphere graph of $M$ into the sphere graph of $N$.

We begin with a topological observation. 

\begin{lemma}\label{lem:kappen-uberleben}
  Let $S$ be any sphere in normal position with respect to
  $\sigma_0$ which is not disjoint from $\sigma_0$.
  Let $D\subset S$ be any innermost disk of 
  $S-\sigma_0$, and let
  $D_0\subset\sigma_0$ be an embedded disk in $\sigma_0$ with the same
  boundary circle: $\partial D = \partial D_0$. Then the sphere
  $S^\prime = D\cup D_0$ is essential in $N$.
\end{lemma}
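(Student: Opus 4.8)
The plan is to reduce the assertion to showing that $S'$ bounds no embedded ball in $N$, and then to contradict the minimal position of $S$ and $\sigma_0$ if it did. By Laudenbach's theorem \cite{Laudenbach} an embedded $2$-sphere in $N\cong M_{n-1}$ is homotopically trivial exactly when it bounds a ball, so this reduction is enough. First I would push $D_0$ slightly off $\sigma_0$ so that $S'$ lies in the interior of $\hat N\subset N$; in particular $S'$ is then disjoint from the two balls $B_1,B_2$ glued to the boundary spheres $\partial_1,\partial_2$ of $\hat N$ in order to produce $N$.

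Suppose then $S'=\partial B$ for an embedded ball $B\subset N$. As $B_1$ and $B_2$ are connected and miss $S'$, each of them lies in $B$ or in $\overline{N\setminus B}$; I would run through the (three, up to interchanging $B_1$ and $B_2$) cases and, deleting from $B$ whichever of $B_1,B_2$ it contains, obtain in each case a compact submanifold $W\subset\hat N$ with $\operatorname{int}W\cap\sigma_0=\emptyset$ whose boundary is assembled from $D$, from one or two subdisks of $\partial_1\cup\partial_2$, or from $\partial_1,\partial_2$ themselves, together with $S'$. Pushing $\partial_1,\partial_2$ back onto $\sigma_0$ through the defining collar translates $W$ into one of: (a) $D$ cuts off from $M\setminus\sigma_0$ a ball $B^3$ with $\partial B^3=D\cup D_0'$ for a disk $D_0'\subset\sigma_0$; (b) $S'$ is isotopic in $M$ to $\sigma_0$; (c) $S'$ cobounds with a collar of $\sigma_0$ a once-punctured $S^1\times S^2$ in $M$. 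Case (b) I would reduce to (a) by taking $B^3$ inside a parallelism between $S'$ and $\sigma_0$, and I expect case (c) either to reduce to (a) or (b) or to be excludable directly from the topology of $W$.

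It then remains to handle case (a), using minimal position. The decisive point is that $\operatorname{int}B^3$ is disjoint from $\sigma_0$: an innermost-disk argument on $S\cap B^3$ (using also innermost disks of $\sigma_0\setminus S$ inside $D_0'$, which exist and are non-reducible by minimal position) lets me isotope $S$ so that in addition $\operatorname{int}B^3$ is disjoint from $S$, while not changing $S\cap\sigma_0$. Then $B^3$ is a genuine product region lying between $S$ and $\sigma_0$, and pushing the subdisk $D$ of $S$ across $B^3$ and slightly beyond $\sigma_0$ is an isotopy of $S$ that removes the circle $c=\partial D$ and no other circles of $S\cap\sigma_0$. Thus $|S\cap\sigma_0|$ strictly decreases; since $S$ stays essential and $c$ was an innermost circle, this contradicts that $S$ is in minimal position with $\sigma_0$.

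I expect the main obstacle to be precisely this cleanup step: arranging the innermost-disk reductions correctly, given that $S$ meets $\partial B^3$ non-transversally along the whole face $D\subset S$ (and that $D_0'$ may contain further intersection circles), and verifying that the case analysis over $B_1,B_2$ is exhaustive and that case (c) really does reduce. Both standing hypotheses are used: $\sigma_0$ non-separating makes $\hat N$ connected, with $\partial_1,\partial_2$ honestly the two sides of $\sigma_0$, and $S$ meeting $\sigma_0$ is essential because a separating sphere disjoint from $\sigma_0$ can become inessential in $N$ (for instance one that cuts off a side of $M$ whose only handle is $\sigma_0$) — which is exactly why such spheres are excluded here and in the definition of the submanifold projection.
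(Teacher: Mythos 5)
Your route is genuinely different from the paper's. After the same initial reduction (inessential in $N$ means bounding a ball $B\subset N$), the paper does not run an isotopy argument: it shows $S'$ cannot bound a ball in $\hat N$ or in $\hat N$ with the boundary sphere met by $D$ capped off (else $D$ would be homotopic into $\partial \hat N$, contradicting normal position), and then switches to algebra, introducing the complementary surgery sphere $\hat S=D\cup(\sigma_0-D_0)$, noting that $\sigma_0$ is the connected sum of $S'$ and $\hat S$, and deriving a contradiction from the splitting of $\pi_2(M)$ into a part spanned by nonseparating spheres and the kernel of $\pi_2(M)\to H_2(M;\mathbb{Z})$. Your cases (a) and (b) correspond to the paper's first two exclusions, and your intersection-reducing isotopy there is the standard minimal-position argument; the cleanup step you flag (clearing $S$ out of the interior of $B^3$ without touching $S\cap\sigma_0$) is delicate but standard, and I do not regard it as a fatal defect.

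The genuine gap is your case (c), the configuration in which the ball $B$ contains both filling balls $B_1,B_2$. You only ``expect'' this to reduce to (a) or (b) or to be directly excludable, but it does neither: there $B\setminus(B_1\cup B_2)$ descends to a region $R\subset M$ with $\partial R=S'$ which is a once-punctured $S^1\times S^2$ containing $\sigma_0$ as its core nonseparating sphere. Hence $S'$ is an essential \emph{separating} sphere of $M$ -- neither isotopic to $\sigma_0$ nor bounding a ball in $M\setminus\sigma_0$ -- so no isotopy of $S$ decreasing $\vert S\cap\sigma_0\vert$ is produced by this data, and minimal position alone gives no immediate contradiction. This is precisely the configuration for which the paper's second half exists: it observes that the relation $[\sigma_0]=[S']+[\hat S]$ in $\pi_2(M)$ would then force a nonzero homologically trivial (separating) sphere to equal a difference of classes of nonseparating spheres, which is incompatible with the module decomposition of $\pi_2(M)$. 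Your proposal is missing an ingredient of this kind, so as it stands the hardest case is unproved.
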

\begin{proof}
  Assume by contradiction that $S'$ is inessential in $N$.
  Denote the boundary component of $\hat N$ which intersects the disk $D$
  by $\partial^+\hat N$ and the other by $\partial^-\hat N$.
  Equip $\sigma_0$ with the orientation of the oriented
  boundary component $\partial^+\hat N$ 
  of $\hat N$ (for a choice of an orientation of $N$). Since $\sigma_0$ is 
  non-separating by assumption,
  this choice of orientation determines a
  choice of a generator of $H_2(M,\mathbb{Z})$, given by the oriented 
  inclusion $\sigma_0\to M$, again denoted by
  $\sigma_0$. Furthermore, this choice of orientation 
  restricts to an orientation of $D_0$ and hence defines an 
  orientation of $S^\prime$.

  Since $S'$ is
  an inessential embedded sphere in $N$, 
  it bounds a ball in $N$. Because $\sigma_0$ and $S$ are in minimal
  position, the sphere $S'$ does not bound a ball in the manifold
  $\hat N$.
  Similarly, the sphere $S^\prime$ does not bound a
  ball in the manifold $\hat N_+$
  obtained from $\hat N$ by gluing a ball to $\partial^+\hat N$. 
  Namely, otherwise $D$ would be homotopic in $\hat N$
  into $\partial^+\hat  N$,
  violating as before normal position. 
  As a consequence, $S^\prime$ bounds a region in $\hat N$ whose
  second boundary component is $\partial^-\hat N$.
  Thus $S^\prime$ is homologous to $\pm \sigma_0$ in $M$.
  Inspecting orientations,
  we obtain that $S^\prime$ defines the homology class $\sigma_0$
  in $M$.

  Let $\hat S$ be the sphere in $N$ obtained by gluing $\sigma_0-D_0$ to 
  $D$ and equipped with the orientation inherited from the
  boundary orientation of $\partial^+\hat N$. For this choice of orientation,  
  $\sigma_0$ is the oriented connected sum of $S^\prime$ and 
  $\hat  S$. Thus as homolopy classes in $M$, we have
  $\sigma_0=S^\prime+\hat S=\sigma_0+\hat S$
  and hence 
  $\hat S$ is homologically trivial in $M$. In other words, the
  embedded sphere $\hat S$ in $M$ is separating. Furthermore, it is not
  homotopically trivial in $M$, again by minimal position.

  Now the second homotopy group $\pi_2(M)$ of $M$ is a free
  $\pi_1(M)$-module which is the direct sum of two submodules
  $V_1\oplus V_2$, where $V_1$ is spanned by nonseparating embedded spheres
  and $V_2$ is spanned by separating embedded spheres. In other words,
  $V_2$ is the kernel of the map $\pi_2(M)\to H_2(M,\mathbb{Z})$ as $\pi_1(M)$-modules, 
  where the action of $\pi_1(M)$ on $H_2(M,\mathbb{Z})$ is the trivial action.
  By the above, the spheres $\sigma_0$ and $S^\prime$ are contained in the submodule 
  $V_1$, and the sphere $\hat S$ is contained
  in $V_2$. As $\sigma_0+S^\prime=\hat S$ (connected sum and hence
  sum in $\pi_2(M)$), and all elements are non-zero, 
  this is impossible.
\end{proof}

  %
%
  %
  %
  %
  %

Call a sphere $S\subset M-\sigma_0$ \emph{non-peripheral} if its image 
in the manifold $N$ 
is non-trivial. The set of all non-peripheral spheres defines a subgraph 
${\cal N\cal P}(\sigma_0)$ of the sphere graph of $M$ consisting of sphere 
disjoint from $\sigma_0$. Let also ${\cal P}(\sigma_0)$ be the 
set of all spheres which 
are disjoint from $\sigma_0$ and peripheral. Note that any such sphere 
(with $\sigma_0$ excluded) is separating.

Lemma~\ref{lem:kappen-uberleben} allows to define a \emph{submanifold
  projection}
\[p_{\sigma_0}:{\cal S\cal G}_n-{\cal P}(\sigma_0) \to {\cal N\cal P}(\sigma_0)\]
(more precisely, the target of the 
projection is the family of all non-empty finite subsets of ${\cal N\cal P}(\sigma_0)$)
in the
following way. For a sphere 
$S$ in $M$ distinct from $\sigma_0$ and not peripheral we put 
$p_{\sigma_0}(S)=S$ if $S$ is disjoint from $\sigma_0$, and 
if $S$ intersects
$\sigma_0$, then we let $p_{\sigma_0}(S)$ be the union of
all spheres which 
are 
obtained by surgery at an innermost disk of $S-\sigma_0$. By
Lemma~\ref{lem:kappen-uberleben}, each such surgery yields 
a non-peripheral sphere in $M-\sigma_0$. 
The projection $p_{\sigma_0}(\Sigma)$ of a sphere system $\Sigma$
with more than one component is defined to be the union
$\cup_{S\in \Sigma}p_{\sigma_0}(S)$.

There may be spheres in the set $p_{\sigma_0}(\Sigma)$ which intersect,
but as a subset of ${\cal N\cal P}(\sigma_0)$ it is of uniformly bounded 
diameter. 
Namely, all innermost disks of
$\Sigma-\sigma_0$ are disjoint. 
Hence if $S_1,S_2\in p_{\sigma_0}(\Sigma)$
are any two spheres constructed from innermost disks $D_1,D_2$, 
then there exist disjoint spheres $S_1^\prime,S_2^\prime\in 
p_{\sigma_0}(\Sigma)$ 
such that $S_i^\prime$ is disjoint from $S_i$
$(i=1,2)$. Just 
choose $S_i^\prime$ to be
the spheres constructed from two innermost disks $D_1,D_2$ of $\Sigma- \sigma_0$ and 
two disjoint disks in $\sigma_0$ bounded by the disjoint boundary circles of 
$D_1,D_2$. 

Let ${\cal S\cal G}_{N}={\cal S\cal G}_{n-1}$ be the sphere graph of the 
manifold $N$ obtained by cutting $M$ open along $\sigma_0$ and capping off the boundary.
There exists a natural simplicial projection 
\[\Upsilon_{\sigma_0}:{\cal N\cal P}(\sigma_0)\to 
{\cal S\cal G}_{N}.\] Consider the composition 
\[p_N=\Upsilon_{\sigma_0}\circ p_{\sigma_0}:{\cal S\cal G}_n-{\cal P}(\sigma_0)\to 
{\cal S\cal G}_{N}.\]  

Our next goal is to establish a control of the images of suitably chosen surgery paths under
the projections $p_N$. 
This will follow from 
a stability property of normal position along such surgery sequences.
Note that the disk $D_2$ in the formulation of the lemma below
need not be innermost, which corresponds to the 
second possibility listed.

\begin{lemma}\label{lem:stability-kappen}
  Let $\Sigma_1$ be a sphere system, and let $\Sigma_2$ be a sphere
  which is in normal
  position with respect to $\Sigma_1$. Let $D_i \subset \Sigma_i, i=1,2$ be two embedded disks such
  that $\partial D_1 = \partial D_2$, and such that the interiors of
  the $D_i$ are disjoint. Let $S= D_1\cup D_2$. Then either
  \begin{enumerate}
  \item up to homotopy, $S$ is disjoint from $\Sigma_1$, or
  \item the normal position of $S$ with respect to $\Sigma_1$ has an
    innermost disk component which is (with boundary gliding on $\Sigma_1$)
    isotopic to an innermost disk component of $\Sigma_2$.
  \end{enumerate}
\end{lemma}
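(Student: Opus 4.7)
The plan is to bring $S = D_1 \cup D_2$ into transverse position with respect to $\Sigma_1$ by a small perturbation of $D_1$ off $\Sigma_1$, and then use the essentiality of innermost disks of $\Sigma_2-\Sigma_1$ lying inside $D_2$ to identify an innermost disk of $S$ in normal position with an innermost disk of $\Sigma_2$. First, because $D_1\subset \Sigma_1$, I would push $D_1$ with boundary fixed into the side of $\Sigma_1$ along which $D_2$ emanates from $\partial D_1=\partial D_2$, producing a disk $\tilde D_1$. After a further small perturbation near the corner $\partial D_1=\partial D_2$, the sphere $S'=\tilde D_1\cup D_2$ is embedded, transverse to $\Sigma_1$, and isotopic to $S$. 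The intersection $S'\cap\Sigma_1$ consists precisely of the circles $c_1,\ldots,c_k$ in which the interior of $D_2$ meets $\Sigma_1$. If $k=0$, then $S'$ is disjoint from $\Sigma_1$, and conclusion (1) holds.

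Assume $k\geq 1$. I would select an innermost circle $c=c_i$ on $D_2$, bounding a disk $E\subset D_2$ whose interior avoids $\Sigma_1$. Since $c$ is innermost on $D_2$ and $D_2\subset\Sigma_2$ contains the same disk $E$ with no other intersection circle inside, $E$ is an innermost disk component of $\Sigma_2-\Sigma_1$, and by the same reasoning $E$ is an innermost disk component of $S'-\Sigma_1$ on the $D_2$ side of $c$. The crucial input from the hypothesis is that since $\Sigma_2$ is in normal position with $\Sigma_1$, the innermost disk $E$ is \emph{essential} in its component of $M\setminus\Sigma_1$: it does not cobound a ball in $M\setminus\Sigma_1$ with any disk of $\Sigma_1$ bounded by $c$, for otherwise this ball would provide a reduction of the intersection $\Sigma_2\cap\Sigma_1$, contradicting normal position.

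To conclude, I would further isotope $S'$ into a normal position $S_0$ by eliminating reducible pairs one by one. A reduction eliminating the circle $c$ by pushing its $E$-side across a ball would force $E$ to be parallel through a ball to a disk of $\Sigma_1$, contradicting essentiality. Hence $c$ can only be eliminated via the opposite side of $c$ in $S'$, which contains $\tilde D_1$ and all the other intersection circles $c_j$; such a reduction becomes available only once all other $c_j$ have already been eliminated, and the resulting final reduction then makes $S'$ disjoint from $\Sigma_1$, putting us in case (1). Otherwise, $c$ survives in the normal position $S_0$, and the innermost disk of $S_0-\Sigma_1$ on the $E$-side of $c$ is isotopic, via a boundary-gliding isotopy on $\Sigma_1$, to the disk $E$, which is an innermost disk component of $\Sigma_2$. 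The main obstacle is making this last step fully rigorous: one has to track the sub-disk $E\subset S'$ through the normalization isotopy and confirm that essentiality truly forbids the $E$-side reduction of $c$. This ultimately relies on the standard theory of normal position of spheres in $\sharp_n S^1\times S^2$ going back to Laudenbach and Hatcher.
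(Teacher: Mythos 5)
Your argument is correct and follows essentially the same route as the paper: push $D_1$ off $\Sigma_1$ so that $S$ meets $\Sigma_1$ only in the circles of $\mathrm{int}(D_2)\cap\Sigma_1$, note that every complementary piece of $S$ except the one containing the pushed-off copy of $D_1$ is a piece of $\Sigma_2-\Sigma_1$ and hence essential by normal position, and iteratively eliminate only the trivial innermost disk containing $D_1$ until one either reaches disjointness (case (1)) or a normal position that inherits an innermost disk from $\Sigma_2$ (case (2)). The one step you flag as delicate --- making ``essential'' precise and tracking $E$ through the normalization --- is exactly what the paper handles by running the whole argument in the universal cover $\widetilde M$ (lifting $D_1$, $D_2$ and taking the full preimage of $\Sigma_1$), the standard setting in which normal position of spheres in $\sharp_n S^1\times S^2$ is characterized; with that substitution your proof coincides with the paper's.
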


\begin{proof}
  Let $\widetilde{M}$ be the universal cover of $M$. We let
  $\widetilde{\Sigma}_1$ be the full preimage of $\Sigma_1$, and let
  $\overline{\Sigma}_2$ be a connected lift of $\Sigma_2$. This contains a
  unique lift $\overline{D}_2$ of $D_2$. We denote by $\overline{D}_1$
  the unique lift of $D_1$ which intersects $\overline{D}_2$. Then the
  sphere
  $$\overline{S}= \overline{D}_1 \cup \overline{D}_2$$
  is a connected lift of $S$. We modify $S$ and this lift by pushing
  $\overline{D}_1$ slightly off $\widetilde{\Sigma}_1$ in order to make
  every intersection of $\overline{S}$ with $\widetilde{\Sigma}_1$
  transverse. Note that every such intersection circle is then
  contained in $\overline{D}_2$. In particular, every innermost disk
  component of $\overline{S}$ with respect to $\widetilde{\Sigma}_1$ is
  either contained in $\overline{\Sigma}_2$, or contains $\overline{D}_1$
  (and there is at most one of the latter type).

  If there is no innermost disk containing 
  $\overline{D}_1$, or if the innermost disk containing 
  $\overline{D}_1$ is not homotopic (relative to its boundary) into
  $\widetilde{\Sigma}_1$, then $S$ is in normal position with respect to
  $\Sigma_1$. Namely, any other pathology is excluded by normal
  position of $\Sigma_1$ and $\Sigma_2$. In that case we find an
  innermost disk component of $\overline{S}$ which is contained in 
  $\overline{D}_2$ and satisfies 
  property ii).
   
  If there is an innermost disk component $D\supset \overline{D}_1$
  which is homotopic relative to its boundary into $\widetilde{\Sigma}_1$,
  then there is a ball $B$ whose boundary is the union of $D$ with a
  disk contained in $\widetilde{\Sigma}_1$. In this case, we can homotope
  $S$ by pushing it through this ball. As a result we obtain a sphere
  $S'$ which is again of the form $D'_1 \cup D'_2$ with disks
  contained in $\Sigma_1,\Sigma_2$, and whose lift intersects $\widetilde{\Sigma}_1$
  in one less circle. Iterating this argument, we either terminate in
  a sphere which is disjoint from $\widetilde{\Sigma}_1$ and therefore 
  has property i), or we obtain property ii) as above.
\end{proof}

Lemma \ref{lem:stability-kappen} allows to define \emph{nested} surgery sequences
$\Sigma_i$ as follows. Let $\Sigma_0,\Sigma$ be sphere systems, and let 
$S_0,S$ be components of $\Sigma_0,\Sigma$ which intersect. Choose an innermost
disk $D\subset S\in \Sigma$ with boundary on $S_0\in \Sigma_0$ and let 
$D_0\subset S_0$ be a disk with boundary 
$\partial D_0=\partial D$. Perform surgery of $S_0$ by replacing $S_0$ by
$S_1=D_0\cup D$.

Assume that $S_1$ is not disjoint from $S$. Choose an innermost disk
$D^\prime$ of $S-S_1$, with boundary on $S_1$. By Lemma \ref{lem:stability-kappen},
up to homotopy, the boundary of $D^\prime$ is contained in $D_0$ and hence
bounds a unique disk $D_1\subset D_0$. Perform surgery by replacing
$S_1$ by $D_1\cup D^\prime$ and iterate this construction.

\begin{lemma}\label{nested}
Let $(S_i)$ be a nested surgery sequence of a sphere $S_0$ towards a sphere
$S$. Then each sphere $S_i$ in the sequence is a union of a disk $D_i\subset S_0$
and a disk $D_i^\prime\subset S$, with $D_{i+1}\subset D_{i}$.
\end{lemma}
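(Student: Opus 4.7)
The plan is induction on $i$. The base case $i=1$ is immediate from the construction: by definition $S_1 = D_0 \cup D$ with $D_0 \subset S_0$ and $D \subset S$, so I set $D_1 := D_0$ and $D_1' := D$, and there is no prior disk to nest into.

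For the inductive step, I would assume $S_i = D_i \cup D_i'$ with $D_i \subset S_0$, $D_i' \subset S$, $\partial D_i = \partial D_i'$, and $D_i \subseteq D_{i-1}$. The next step of the nested surgery requires an innermost disk $D_{i+1}'$ of $S - S_i$, which in turn requires $S_i$ in normal position with $S$. Since the disk $D_i' \subset S_i$ already lies on $S$, I would achieve normal position by isotoping the interior of $D_i'$ slightly off $S$ while keeping $\partial D_i'$ fixed. Then I would apply Lemma \ref{lem:stability-kappen} with $\Sigma_1 = S$, $\Sigma_2 = S_0$, $D_1 = D_i' \subset S$ and $D_2 = D_i \subset S_0$, so that the lemma's composite sphere $D_1 \cup D_2$ is precisely $S_i$. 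This should yield that after the perturbation the intersection $S_i \cap S$ consists of circles lying in $D_i$ (namely the sub-collection of $S_0 \cap S$ contained in $D_i$, together with $\partial D_i = \partial D_i'$), since the perturbed $D_i'$ is disjoint from $S$. The innermost disk $D_{i+1}'$ of $S - S_i$ then has boundary circle $c = \partial D_{i+1}' \subset S \cap S_i \subseteq D_i$. Because $D_i$ is a topological disk and $c$ is an embedded circle inside it, $c$ bounds a unique subdisk $D_{i+1} \subseteq D_i$, and $S_{i+1} = D_{i+1} \cup D_{i+1}'$ has the required structure, closing the induction.

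The main obstacle is ruling out spurious intersection circles of $S_i \cap S$ outside $D_i$ after the isotopy of $D_i'$: a priori, normalizing the sphere $S_i = D_i \cup D_i'$ against $S$ might introduce new intersection patterns that escape $D_i$ and place $\partial D_{i+1}'$ on the wrong piece. Lemma \ref{lem:stability-kappen} is precisely the tool that controls innermost disk components under the gluing $D_1 \cup D_2$, and once it is invoked the rest of the argument reduces to the planar fact that every embedded circle in a disk bounds a unique subdisk.
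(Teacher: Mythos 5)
Your proof is correct and follows essentially the same route as the paper: induction on the length of the sequence, with Lemma \ref{lem:stability-kappen} (applied to the composite sphere $S_i=D_i\cup D_i'$, with $\Sigma_1=S$, $\Sigma_2=S_0$) guaranteeing that all intersection circles of $S_i$ with $S$ lie in the $S_0$-side disk $D_i$, so the boundary of the next innermost disk bounds a unique subdisk $D_{i+1}\subset D_i$. The only cosmetic difference is that the paper additionally notes that the new $S$-side disk either contains or is disjoint from $D_i'$, a case distinction your argument subsumes since the lemma places no nesting requirement on the $S$-side disks.
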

\begin{proof} We proceed by induction on the length $m$ of the sequence. 
The statement is clear in the case $m=1$, so assume that the statement holds true for 
$m-1$. Let $(S_i)$ be a nested surgery sequence of length $m$. By induction 
hypothesis, $S_{m-1}$ is a union of a disk $D_{m-1}\subset S_0$ and a disk
$D_{m-1}^\prime\subset S$. By Lemma \ref{lem:stability-kappen}, an innermost disk 
$D\subset S$ of $S-S_{m-1}$ has its boundary in $D_{m-1}$ and hence
bounds a disk $D_m\subset D_{m-1}$. Moreover up to homotopy, 
either $D$ contains the disk $D^\prime_{m-1}$ and hence the sphere $S_m$ obtained from $S_{m-1}$ by
nested surgery with innermost component $D$ is a union of 
$D\supset D_{m-1}^\prime$ and $D_m\subset D_{m-1}$, or it is disjoint from $D_{m-1}^\prime$ and 
once again, the statement of the lemma is true for $S_m$. 
\end{proof}

Let as before $d_{\cal S\cal G}$ be the distance in the sphere
graph of $M$.

\begin{lemma}\label{lem:projection-doesnt-change-after-2}
  Let $(S_i)$ be a nested surgery sequence connecting 
  a non-separating sphere $\sigma_0$
  to a different sphere $S$. Let $S_k$ be any point
  of this surgery sequence which satisfies 
  $d_{\cal S\cal G}(S_k, \sigma_0) \geq 2$. Let $\hat N$
  be obtained from $M$ by
  removal of $\sigma_0$, and let $N$ be obtained from $\hat N$ by 
  capping off the boundary spheres. Then 
    \[p_{N}(S_k)\cap p_{N}(S)\not=\emptyset.\]
   Consequently, the projections $p_N(S_k),p_N(S)$ are $2$-close
   in the sphere graph of $N$.
\end{lemma}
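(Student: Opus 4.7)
The plan is to identify a disk $E$ that serves as an innermost disk of both $S-\sigma_0$ and $S_k-\sigma_0$, so that the corresponding surgery spheres project to a common class in ${\cal S\cal G}_N$.

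First, apply Lemma~\ref{nested} to write $S_k = D_k \cup D_k'$ with $D_k \subset \sigma_0$ and $D_k' \subset S$ sharing the boundary circle $\partial D_k = \partial D_k'$. Place $S$ in minimal position with $\sigma_0$, and realize $S_k$ by pushing $D_k$ slightly off $\sigma_0$, together with a small collar isotopy of $D_k'$ near its boundary. In this position, $S_k \cap \sigma_0$ consists exactly of those circles of $S \cap \sigma_0$ that lie in the interior of $D_k'$. Since $d_{\cal S\cal G}(S_k, \sigma_0) \geq 2$, this intersection is non-empty. Choose an innermost such circle $c$ in $D_k'$; it bounds a subdisk $E \subset D_k'$ whose interior is disjoint from $\sigma_0$. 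Thus $E$ is an innermost disk of $S - \sigma_0$ (since $E \subset D_k' \subset S$ and its interior lies in the interior of $D_k'$), and also an innermost disk of $S_k - \sigma_0$ in the position above.

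The main obstacle is to confirm that $E$ persists as an innermost disk once $S_k$ is reduced to minimal position with $\sigma_0$. To eliminate the circle $c$ from $S_k \cap \sigma_0$ one would need an innermost disk of $S_k - \sigma_0$ bounded by $c$ that cobounds a ball with a disk of $\sigma_0$. The only innermost candidate is $E$ itself; the other disk of $S_k$ bounded by $c$ contains $D_k$ together with any further intersection circles of $S_k \cap \sigma_0$ lying in $D_k'$ outside $E$, so it is not innermost. But a reducing ball for $E$ would give $E \cup F$ bounding a ball in $M$ for some disk $F \subset \sigma_0$ cobounded by $c$; since $E \subset S$, the same reduction would eliminate $c$ from $S \cap \sigma_0$, contradicting the minimal position of $S$. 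Hence $c$ survives in the minimal position of $S_k$, and $E$ remains an innermost disk of $S_k - \sigma_0$.

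By the definition of $p_{\sigma_0}$, both $p_{\sigma_0}(S)$ and $p_{\sigma_0}(S_k)$ contain the surgery sphere $E \cup F$ for either choice of disk $F \subset \sigma_0$ cobounded by $c$. In $N$, the sphere $\sigma_0$ bounds a ball (it is inessential after capping), so pushing $F$ through this ball isotopes $E \cup F$ to the sphere obtained using the other choice of disk in $\sigma_0 - c$. Hence both surgery spheres represent a single class $[\bar E] \in {\cal S\cal G}_N$, and this class lies in $p_N(S) \cap p_N(S_k)$, giving the non-empty intersection. The $2$-close conclusion follows from the uniformly bounded diameter of the projection sets $p_N(\cdot)$ recorded just after the definition of $p_{\sigma_0}$.
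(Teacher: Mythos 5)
Your overall strategy is exactly the paper's: use Lemma~\ref{nested} to write $S_k=D_k\cup D_k'$ with $D_k\subset\sigma_0$, $D_k'\subset S$, locate an innermost disk of $S_k-\sigma_0$ that is simultaneously an innermost disk of $S-\sigma_0$, and observe that the corresponding surgery sphere lies in both $p_N(S_k)$ and $p_N(S)$ (the two choices of capping disk in $\sigma_0$ being interchangeable in $N$). The first and last paragraphs are fine, and the hypothesis $d_{\cal S\cal G}(S_k,\sigma_0)\geq 2$ is used correctly to rule out the degenerate case.

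The one place where you diverge is your middle paragraph, which is an inline re-derivation of Lemma~\ref{lem:stability-kappen}; the paper simply cites that lemma at this point, and so should you. As written, your persistence argument has a gap: you only rule out the single reduction move that pushes $E$ itself across a ball at the circle $c$. Passage to normal position is not controlled by checking one circle at a time downstairs in $M$ --- a priori it may proceed through ball pushes at other circles, or through innermost disks of $\sigma_0-S_k$ rather than of $S_k-\sigma_0$, and you have not verified that the configuration near $E$ survives such moves, nor that the position of $S_k$ you constructed (with $D_k$ pushed off $\sigma_0$) is actually normal, which is what the definition of $p_{\sigma_0}(S_k)$ requires. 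This is precisely why the paper proves Lemma~\ref{lem:stability-kappen} in the universal cover: there one sees that every intersection circle of the relevant lift $\overline S_k$ with $\widetilde\sigma_0$ lies in $\overline D_k'$, that all innermost disk pieces except possibly the one containing $\overline D_k$ are innermost disk pieces of $S$ (hence cannot be pushed off, by normal position of $S$ and $\sigma_0$), and that the only possible pathology --- the piece containing $\overline D_k$ being homotopic rel boundary into $\widetilde\sigma_0$ --- can be removed iteratively until one either lands in case (1) (excluded here by $d_{\cal S\cal G}(S_k,\sigma_0)\geq 2$) or in case (2). Replacing your second paragraph by an appeal to Lemma~\ref{lem:stability-kappen} makes the proof complete and identical in substance to the paper's.
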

\begin{proof}
  Assume without loss of generality that we have chosen
  representatives of $\sigma_0,S$ which are in normal
  position. We will denote these representatives by the same symbol
  again.

  Let $S_i$ be a sphere on the nested surgery sequence. 
  By Lemma \ref{nested}, $S_i$ is a union of
  two disks $S_i=D_i^- \cup D_i^+$ with $D_i^-\subset \sigma_0$ and $D_i^+
  \subset S$.

  By Lemma~\ref{lem:stability-kappen}, either $S_i$ is disjoint
  from $\sigma_0$, or its normal position has an innermost disk
  component which is also an innermost disk component of
  $S$. In the latter case, the projections $p_{N}(S_i),p_{N}(S)$ intersect
  as stated in the lemma.

  The final statement of the lemma follows from Lemma~\ref{lem:kappen-uberleben}.
\end{proof}

Now we can show the \emph{bounded geodesic projection theorem} using an
argument of Webb from \cite{We15}.

\begin{theo}\label{thm:bounded-geodesic-projection}
  There is a number $q>0$ with the following property.
  Let $\sigma_0$ be a nonseperating sphere, and let 
  $N$ the capped off 
  complement of $\sigma_0$ as before, with innermost 
  projection $p_N:{\cal S\cal G}_n-{\cal P}(\sigma_0)\to {\cal S \cal G}_{N}$.
  
  Let $(S_i)_{0\leq i\leq m}$ be any geodesic in ${\cal S\cal G}_n$ which is 
  disjoint from $\mathcal{P}(\sigma_0) \cup \{\sigma_0\}$. Then
  $$d(p_{N}(S_0), p_{N}(S_m)) < q.$$
\end{theo}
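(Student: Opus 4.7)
The plan is to adapt Webb's \cite{We15} elementary argument for the bounded geodesic image theorem in the curve graph, using nested surgery sequences in place of subsurface isotopy and surgery in place of Masur--Minsky style resolutions.

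The first step is to establish local Lipschitz control: there exists a constant $C_1>0$ such that whenever $S,S'$ are disjoint spheres with $S,S' \notin {\cal P}(\sigma_0)\cup\{\sigma_0\}$, we have $d_{{\cal S\cal G}_N}(p_N(S),p_N(S'))\le C_1$. The proof splits into three cases according to whether $S,S'$ intersect $\sigma_0$. If neither meets $\sigma_0$, the projections are the spheres themselves and are disjoint in $N$. If exactly one of them, say $S'$, meets $\sigma_0$, an innermost disk of $S'-\sigma_0$ can be capped by a disk on $\sigma_0$ chosen to avoid $S$ (which is disjoint from $\sigma_0$), yielding a representative of $p_N(S')$ disjoint from $p_N(S)=S$. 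If both meet $\sigma_0$, we use disjointness of $S$ and $S'$ to choose innermost disks $D\subset S$, $D'\subset S'$ of the normal position with respect to $\sigma_0$ that are disjoint, and bounding disjoint capping disks on $\sigma_0$; the two resulting spheres in $p_N(S),p_N(S')$ are then disjoint in $N$.

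The second step is to assume for contradiction that $d_{{\cal S\cal G}_N}(p_N(S_0),p_N(S_m))\ge q$ for a large $q$ to be chosen. Since the jumps along $(p_N(S_i))$ are bounded by $C_1$, we may pick an index $k$ with $p_N(S_k)$ far from both $p_N(S_0)$ and $p_N(S_m)$. For each index $i\in\{0,k,m\}$ construct a nested surgery sequence $\sigma_0=T^i_0,T^i_1,\ldots,T^i_{r_i}=S_i$ in ${\cal S\cal G}_n$. By Lemma~\ref{lem:projection-doesnt-change-after-2}, once $T^i_j$ has $d_{{\cal S\cal G}_n}(T^i_j,\sigma_0)\ge 2$, its projection is $2$-close to $p_N(S_i)$; hence there are spheres $U_0,U_k,U_m$ each at ${\cal S\cal G}_n$-distance exactly $2$ from $\sigma_0$ with $p_N(U_i)$ $2$-close to $p_N(S_i)$.

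The third step is the contradiction, which is also the main obstacle. The spheres $U_0,U_k,U_m$ all lie in the $2$-ball around $\sigma_0$ in ${\cal S\cal G}_n$, so any pair is at distance at most $4$ there, but by construction their $p_N$-images are pairwise far in ${\cal S\cal G}_N$. Invoking Step~1 (local Lipschitz control) along a short ${\cal S\cal G}_n$-path joining $U_0$ and $U_m$ through $\sigma_0$-neighbors produces a bound $d_{{\cal S\cal G}_N}(p_N(U_0),p_N(U_m))\le 4C_1$, hence $d_{{\cal S\cal G}_N}(p_N(S_0),p_N(S_m))\le 4C_1+O(1)$. The delicate point is that the path joining $U_0$ and $U_m$ must itself lie in the domain of $p_N$ (i.e.\ avoid $\{\sigma_0\}\cup{\cal P}(\sigma_0)$), which requires choosing the intermediate vertices carefully; this is where Webb's argument pays off and where we must adapt it using the freedom in choosing nested surgery representatives and non-peripheral spheres near $\sigma_0$ (provided by Lemma~\ref{nonseparating} and the bounded geometry near $\sigma_0$). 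Once this is arranged, setting $q=4C_1+O(1)+1$ yields the desired bound.
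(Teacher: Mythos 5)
Your Step 1 (coarse Lipschitz control of $p_N$ on pairs of disjoint spheres in its domain) is correct and is also used, without detailed proof, in the paper; your Step 2 is essentially Lemma \ref{lem:projection-doesnt-change-after-2}. The argument collapses at Step 3, and the point you flag as ``delicate'' is not a technicality to be arranged later --- it is the entire content of the theorem. Membership in the $2$-ball around $\sigma_0$ in ${\cal S\cal G}_n$ gives no control whatsoever on $p_N$: every non-peripheral sphere $T\subset M-\sigma_0$ is at distance $1$ from $\sigma_0$ and satisfies $p_N(T)=T$, so already the $1$-ball around $\sigma_0$ surjects onto all of ${\cal S\cal G}_N$. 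Hence the fact that $U_0,U_k,U_m$ are pairwise $4$-close in ${\cal S\cal G}_n$ while having pairwise far projections is not a contradiction. To make it one you would need a short path from $U_0$ to $U_m$ avoiding ${\cal P}(\sigma_0)\cup\{\sigma_0\}$, and precisely when the projections are far apart no such path exists: by the statement you are trying to prove (in the form of Corollary \ref{bigprojection}), every geodesic of non-separating spheres between two spheres with far-apart projections must pass through $\sigma_0$. By replacing $S_0,S_k,S_m$ with the spheres $U_0,U_k,U_m$ near $\sigma_0$ you have discarded the one piece of data that forces the projections to be close, namely the original geodesic $(S_i)$, which lies in the domain of $p_N$. (A minor additional gap: a nested surgery sequence from $\sigma_0$ towards $S_i$ terminates at a sphere \emph{disjoint} from $S_i$, not at $S_i$, and it need not contain any sphere at distance exactly $2$ from $\sigma_0$.)

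The missing input, which is how the paper (following Webb) closes the argument, is hyperbolicity applied to the triangle whose sides are the geodesic $(S_i)$ and the two surgery paths $P,Q$ from $\sigma_0$ towards $S_0$ and $S_m$; since surgery paths are uniform unparameterized quasi-geodesics, this triangle is $D$-thin for a uniform $D$. If $(S_i)$ stays outside the $(2D+2)$-ball around $\sigma_0$, each $S_i$ is $D$-close to a point of $P\cup Q$ at distance at least $D+2$ from $\sigma_0$; all such points of $P$ have coarsely the same projection, close to $p_N(S_0)$, by Lemma \ref{lem:projection-doesnt-change-after-2} (likewise for $Q$ and $p_N(S_m)$), and the short geodesics joining $S_i$ to these points avoid the $1$-neighborhood of $\sigma_0$, so your Step 1 applies along them. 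Reading this off at the index where $(S_i)$ switches from being close to $P$ to being close to $Q$ bounds $d(p_N(S_0),p_N(S_m))$. The case where $(S_i)$ does enter the $(2D+2)$-ball is handled separately: a geodesic meets a bounded ball in a segment of bounded length, and on that segment the hypothesis that $(S_i)$ avoids ${\cal P}(\sigma_0)\cup\{\sigma_0\}$ together with your Step 1 bounds the projection diameter directly. Note that your proposal never uses the hypothesis that $(S_i)$ is a geodesic except to choose a midpoint; that is the symptom of the missing idea.
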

\begin{proof} By Theorem 1.2 of \cite{HiHo17}, there exists a number $K>0$ such that 
  surgery sequences are unparameterized
  $K$-quasigeodesics in the sphere graph. Since the sphere graph is
  Gromov hyperbolic, there is a constant $D>0$ such that a triangle with
  $K$-quasigeodesic sides is $D$-thin.

  For ease of exposition, we distinguish between two cases. First
  assume that the geodesic $(S_i)$ never enters the
  $(2D+2)$-neighborhood of $\sigma_0$.
  Consider nested surgery sequences $P$ and $Q$ joining $\sigma_0$ to 
  spheres disjoint from 
  $S_0$ and $S_m$, respectively.

  By the thin triangle property, there is a sphere $S_k$ which is of distance at most $D$ to
  both $P$ and $Q$. Furthermore, every point $S_i$ for $i<k$ is of distance at
  most $D$ to $P$, and every point $S_i$ for $i>k$ is of distance at most $D$
  to $Q$. 
  By Lemma~\ref{lem:projection-doesnt-change-after-2}, the projections
  to $N$ of any point on $P$ of distance at least $2$ from $\sigma_0$ 
  intersect and hence are
  coarsely the same. Since $d_{\cal S\cal G}(S_i,\sigma_0)\geq 2D+2$ for all $i$, 
  for $i\leq k$ the sphere $S_i$ is of distance at most $D$ from a point $S_i^\prime$ on 
  $P$ of distance at least $D+2$ from $\sigma_0$. Thus a geodesic connecting 
  $S_i^\prime$ to $S_i$ does not enter the $1$-neighborhood of $\sigma_0$. Hence the projection
  $p_N$ is defined on such a geodesic, and since $p_N$-is $2$-Lipschitz, 
  the projection of $S_i,
  i\leq k$ is coarsely equal to the projection of $S_k$, and
  similarly for $S_i, i\geq k$. This shows that the 
  diameter of the projection is bounded from above by a universal constant as claimed.

  If $(S_i)$ does enter the $(2D+2)$-ball around $\sigma_0$, then the
  argument needs to be modified in the following way. Let $(S_i)_{j\leq i\leq u}$ be the
  minimal connected segment in $(S_i)$ which contains all intersection
  points of $(S_i)$ with the $(2D+2)$ -ball around $\sigma_0$. 
  The diameter of this segment is at most $4(D+1)$, and since $(S_i)$ is a geodesic, the same is true for its length.
  By our assumption that $(S_i)$ is disjoint from $\mathcal{P}(\sigma_0)\cup\{\sigma_0\}$, the projection 
  $p_N(S_i)$ is defined for all $i$, and the assignment $i \mapsto p_N(S_i)$ is $2$--Lipschitz.
  Hence, we have
  \[ \mathrm{diam}(\{ p_N(S_i), \quad j \leq i \leq u \}) \leq 8(D+1). \]
  On the complement of $(S_i)_{j\leq i\leq u}$ the argument used in the first case applies. Together
  this completes the proof. 
\end{proof}



The condition in the theorem simplifies for nonseparating spheres. 
To exploit this, recall from Lemma \ref{nonseparating} that any two 
non-separating spheres in ${\cal S\cal G}_n$ can be connected by a geodesic
consisting of non-separating spheres. We use this in the following

\begin{corollary}\label{bigprojection}
    Suppose that $\sigma_1, \sigma_2$ are two non-separating spheres, and 
    that $(S_i)$ is a geodesic in the sphere graph
    connecting $\sigma_1$ to $\sigma_2$ consisting of non-separating spheres. 
    
    If $N$ is the capped off complement of a non-separating
    sphere $\sigma_0$ (as in Theorem~\ref{thm:bounded-geodesic-projection}), and
    \[ d(p_N(\sigma_1), p_N(\sigma_2))  \geq q, \]
    then $\sigma_0 = S_i$ for some $i$.
\end{corollary}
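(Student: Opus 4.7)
The plan is to argue by contrapositive using Theorem~\ref{thm:bounded-geodesic-projection}: if the geodesic $(S_i)$ avoids $\sigma_0$, then in fact the entire geodesic avoids $\mathcal{P}(\sigma_0)\cup\{\sigma_0\}$, so the projections of its endpoints are $q$-close.

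First I would dispose of the trivial endpoint cases. If $\sigma_1=\sigma_0$ or $\sigma_2=\sigma_0$, then $\sigma_0$ already appears as $S_0$ or $S_m$ and the conclusion is immediate; in particular, for the interesting case of the argument we may assume $\sigma_1,\sigma_2\neq \sigma_0$, so that $p_N(\sigma_1)$ and $p_N(\sigma_2)$ are defined (noting that non-separating spheres, being non-trivial in homology, survive as non-trivial spheres in $N$ and hence do not lie in $\mathcal{P}(\sigma_0)$).

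Next, the key observation. Suppose for contradiction that $\sigma_0\neq S_i$ for all $i$. By the remark after the definition of $\mathcal{P}(\sigma_0)$, every sphere in $\mathcal{P}(\sigma_0)$ other than $\sigma_0$ itself is separating. Since every $S_i$ is non-separating by hypothesis, no $S_i$ lies in $\mathcal{P}(\sigma_0)\setminus\{\sigma_0\}$; combined with our assumption $S_i\neq\sigma_0$, this shows that the geodesic $(S_i)_{0\leq i\leq m}$ is entirely disjoint from $\mathcal{P}(\sigma_0)\cup\{\sigma_0\}$.

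Now I would apply Theorem~\ref{thm:bounded-geodesic-projection} directly to $(S_i)$: its hypotheses are satisfied, so
\[
d(p_N(\sigma_1),p_N(\sigma_2))=d(p_N(S_0),p_N(S_m))<q,
\]
contradicting the assumption $d(p_N(\sigma_1),p_N(\sigma_2))\geq q$. Hence $\sigma_0=S_i$ for some $i$, as claimed. There is no substantive obstacle here beyond identifying the right previous statement to invoke; the content of the corollary is entirely the observation that, among non-separating geodesics, avoiding $\sigma_0$ already guarantees avoiding all of $\mathcal{P}(\sigma_0)\cup\{\sigma_0\}$, which is exactly the hypothesis needed for the bounded geodesic image theorem.
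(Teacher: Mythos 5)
Your proof is correct and follows exactly the paper's own argument: since every sphere in $\mathcal{P}(\sigma_0)$ other than $\sigma_0$ is separating, a geodesic of non-separating spheres avoiding $\sigma_0$ satisfies the hypothesis of Theorem~\ref{thm:bounded-geodesic-projection}, contradicting the assumed lower bound on the projection distance. The extra care you take with the degenerate endpoint cases is a harmless refinement of the same argument.
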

\begin{proof}
    As remarked above, any sphere in $\mathcal{P}(\sigma_0)$ distinct from $\sigma_0$
    is separating. Hence, if $\sigma_0 \neq S_i$ for all $i$, then 
    the geodesic 
    $(S_i)$ consisting of non-separating spheres 
    satisfies the assumption in Theorem~\ref{thm:bounded-geodesic-projection}. This yields the desired contradiction.
\end{proof}

A useful more general version of this corollary is the following

\begin{corollary}\label{quasigeobounded}
For every $L>0$ there exists a number $q(L)>0$ with the following property. Let 
$(S_i)_{0\leq i\leq m}$ be an $L$-quasi-geodesic edge path in the graph of non-separating spheres. If $N$ is the capped of complement
of a non-separating sphere $\sigma_0$ and if $d(p_N(S_0),p_N(S_m)) \geq q(L)$
then $\sigma_0=S_i$ for some $i$.
\end{corollary}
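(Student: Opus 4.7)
The plan is to bootstrap Corollary~\ref{bigprojection} from geodesics to quasi-geodesics using Morse stability in the hyperbolic graph ${\cal S\cal G}_n$. Since the inclusion ${\cal N\cal S\cal G}_n\to{\cal S\cal G}_n$ is a $2$-quasi-isometry by Proposition~\ref{quasiisometric}, any $L$-quasi-geodesic edge path $(S_i)$ in ${\cal N\cal S\cal G}_n$ is an $L'$-quasi-geodesic in ${\cal S\cal G}_n$ for some $L'=L'(L)$, and the Morse lemma provides a constant $K=K(L')>0$ such that $(S_i)$ and any geodesic in ${\cal S\cal G}_n$ with the same endpoints lie within Hausdorff distance $K$. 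I argue the contrapositive: assuming $\sigma_0\neq S_i$ for all $i$ -- so that each $S_i$, being non-separating, avoids ${\cal P}(\sigma_0)\cup\{\sigma_0\}$ and $p_N(S_i)$ is defined -- I produce a bound on $d(p_N(S_0),p_N(S_m))$ depending only on $L$.

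\textbf{The far case.} Suppose first that $d(S_i,\sigma_0)>K$ for every $i$. By Lemma~\ref{nonseparating} choose a geodesic $(T_j)$ of non-separating spheres in ${\cal S\cal G}_n$ from $S_0$ to $S_m$. Morse stability places each $T_j$ within distance $K$ of some $S_i$, so $T_j\neq \sigma_0$; Corollary~\ref{bigprojection} then gives $d(p_N(S_0),p_N(S_m))<q$.

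\textbf{The general case.} If instead the set $I=\{i:d(S_i,\sigma_0)\leq K\}$ is non-empty, set $i_0=\min I$ and $i_1=\max I$. The $L'$-quasi-geodesic inequality together with $d(S_{i_0},S_{i_1})\leq 2K$ forces $i_1-i_0\leq L'(2K+L')$; combined with the $2$-Lipschitz property of $p_N$ established in the proof of Theorem~\ref{thm:bounded-geodesic-projection} and the fact that adjacent vertices of ${\cal N\cal S\cal G}_n$ are adjacent in ${\cal S\cal G}_n$, this bounds $d(p_N(S_{i_0}),p_N(S_{i_1}))$ by $2L'(2K+L')$. The sub-quasi-geodesics $(S_i)_{0\leq i\leq i_0-1}$ and $(S_i)_{i_1+1\leq i\leq m}$ lie outside the $K$-ball around $\sigma_0$, so the far case applies to each, with endpoints $S_0,S_{i_0-1}$ and $S_{i_1+1},S_m$ respectively, yielding bounds of $q$ on the corresponding projection distances. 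Two further $2$-Lipschitz steps bridging $S_{i_0-1}$ to $S_{i_0}$ and $S_{i_1}$ to $S_{i_1+1}$, combined with the triangle inequality, then define the desired constant $q(L)$.

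\textbf{Main obstacle.} The argument is almost entirely formal; the care required is in fixing the Morse constant $K$ consistently so that both (i) the geodesics produced by Lemma~\ref{nonseparating} for the outer sub-quasi-geodesics genuinely avoid $\sigma_0$ (and hence avoid all of ${\cal P}(\sigma_0)\cup\{\sigma_0\}$, as they consist of non-separating spheres), and (ii) the middle segment where $(S_i)$ enters the $K$-ball around $\sigma_0$ has uniformly bounded combinatorial length. The boundary cases $i_0=0$ or $i_1=m$ make one of the outer segments empty and do not affect the estimate.
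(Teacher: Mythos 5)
Your proof is correct and rests on the same underlying idea as the paper's: a quasi-geodesic of non-separating spheres that avoids $\sigma_0$ (hence all of ${\cal P}(\sigma_0)$) has uniformly bounded projection to ${\cal S\cal G}_N$, so a large projection distance forces the path through $\sigma_0$. The only difference is one of detail: the paper asserts the quasi-geodesic version of the bounded-image property as known, whereas you derive it from Corollary~\ref{bigprojection} via the Morse lemma, splitting off the uniformly short middle segment near $\sigma_0$ and using the $2$-Lipschitz property of $p_N$ -- an explicit filling-in of the step the paper leaves implicit.
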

\begin{proof}
We know that a uniform quasi-geodesic in ${\cal S\cal G}_n$ avoiding 
${\cal P}(\sigma_0)$ 
has uniformly small diameter projection into ${\cal S\cal G}_N$. Thus if the diameter of the 
projection is large, it has to pass through ${\cal P}(\sigma_0)$. As any 
point in ${\cal P}(\sigma_0)$ is separating, if the path consists of non-separating spheres then
it has to pass through 
$\sigma_0$.
\end{proof}

\section{Actions of ${\rm Out}(F_n)$ on products of hyperbolic spaces}\label{sec:products}

This final section is devoted to the proofs of the 
results stated in the introduction.
We follow the strategy developed in \cite{BBF15} as used in \cite{BF14b}. The starting point is 
the following result of \cite{BBF15}.

\begin{theo}\label{BBF}
Let ${\cal Y}$ be a collection of $\delta$-hyperbolic spaces, and for every pair
$A,B\in {\cal Y}$ of distinct elements suppose that we are given a uniformly bounded
subset $\pi_A(B)\subset A$, called the projection of $B$ to $A$. Denoting by
$d_A(B,C)$ the diameter of $\pi_A(B)\cup \pi_A(C)$, assume that the following holds: there is
a constant $K>0$ such that 
\begin{enumerate}
\item if $A,B,C\in {\cal Y}$ are distinct, then at most one of the three numbers
\[d_A(B,C), \quad d_B(A,C),\quad d_C(A,B)\]
is greater than $K$ and 
\item for any distinct $A,B$ the set 
\[\{C\in {\cal Y}-\{A,B\}\mid d_C(A,B)>K\}\]
is finite.
\end{enumerate}
Then there is a hyperbolic space $Y$ and an isometric embedding of each $A\in {\cal Y}$ 
onto a convex set in $Y$ so that the images are pairwise disjoint and the nearest point
projection of any $B$ to any $A\not=B$ is within uniformly bounded distance
of $\pi_A(B)$. Moreover, the construction is equivariant with respect to any group acting 
on ${\cal Y}$ by isometries.
\end{theo}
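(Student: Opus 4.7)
The plan is to follow the two-stage construction originally carried out by Bestvina--Bromberg--Fujiwara. In the first stage, I would build an auxiliary graph $\mathcal{PK}(\mathcal{Y})$---the \emph{projection complex}---whose vertex set is $\mathcal{Y}$ itself, and show that it is a quasi-tree. In the second stage, I would blow up each vertex of this quasi-tree into a copy of the actual hyperbolic space $A$ it represents, using the sets $\pi_A(B)$ as attaching data, and verify that the resulting space $Y$ is hyperbolic with the asserted embedding properties.

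For the first stage, after symmetrizing the projections $d_A(B,C)$ slightly to obtain quantities $d_A^\pi(B,C)$ for which the Behrstock-type inequality of Axiom (1) holds cleanly (replacing "at most one is large" by a trichotomy where the large one is strictly larger than the other two), I would declare $A$ and $B$ adjacent in $\mathcal{PK}(\mathcal{Y})$ precisely when $d_C^\pi(A,B)\leq K'$ for every third vertex $C$, with $K'\gg K$ a threshold chosen large enough for the estimates below. Axiom (2) guarantees that for each pair $A,B$ there are only finitely many obstructing $C$, and Axiom (1) induces a linear order on these obstructors via their projection values to $A$. The heart of the argument is to show that this ordered sequence of obstructors is exactly what any path from $A$ to $B$ must traverse, so that $\mathcal{PK}(\mathcal{Y})$ is a quasi-tree. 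The main technical obstacle is proving that cycles cannot exist: given a putative cycle $A_1,\dots,A_k,A_1$ of edges, one uses the Behrstock inequality along consecutive triples to produce a witness $A_j$ on which the projection $d_{A_j}^\pi(A_{j-1},A_{j+1})$ is forced to exceed $K'$, contradicting the edge condition. This requires a delicate induction on cycle length.

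For the second stage, I would glue together the spaces in $\mathcal{Y}$ along the edges of $\mathcal{PK}(\mathcal{Y})$: for each edge $\{A,B\}$ attach a segment joining a chosen point of $\pi_A(B)\subset A$ to a chosen point of $\pi_B(A)\subset B$. Since the projections $\pi_A(B)$ are uniformly bounded and $\mathcal{PK}(\mathcal{Y})$ is a quasi-tree, a tree-of-spaces combination argument (in the spirit of Bowditch's combination theorem) shows that the resulting space $Y$ is $\delta'$-hyperbolic for a uniform $\delta'$. Each $A$ embeds isometrically with convex image by construction, and any geodesic in $Y$ from a point of $B$ to $A$ is forced, by the linear-order description of paths in $\mathcal{PK}(\mathcal{Y})$, to enter $A$ within bounded distance of $\pi_A(B)$; thus nearest-point projection coincides with $\pi_A$ up to uniform error.

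Equivariance is essentially automatic: every step of the construction---the symmetrization, the edge condition in $\mathcal{PK}(\mathcal{Y})$, and the blow-up into a tree of spaces---depends only on the projections $\pi_A(B)$, so any group action on $\mathcal{Y}$ by isometries that permutes the projection data extends canonically to an isometric action on $Y$. I expect the verification of the no-cycles property in $\mathcal{PK}(\mathcal{Y})$ to be the most delicate part of the argument and the step where the precise quantitative form of the Behrstock inequality has to be tracked most carefully.
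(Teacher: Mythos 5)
The paper does not prove this statement at all: it is quoted verbatim as the main construction of Bestvina--Bromberg--Fujiwara \cite{BBF15} and used as a black box, so there is no internal proof to compare against. Your outline is, in essence, a reconstruction of the original BBF argument: symmetrize the projections so that the strengthened projection axioms hold, form the projection complex $\mathcal{P}_K(\mathcal{Y})$ with the edge condition $d_C(A,B)\leq K'$ for all third vertices $C$, prove it is a quasi-tree using the finite, linearly ordered set of ``obstructing'' vertices, and then blow up each vertex into the corresponding hyperbolic space, attaching edges along the projection sets and verifying hyperbolicity of the result (in \cite{BBF15} this last step is done via Bowditch's guessing-geodesics criterion rather than a combination theorem, and the totally geodesic embedding of each piece and the identification of nearest-point projection with $\pi_A$ require separate arguments, not just ``by construction''). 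So the architecture is right and matches the source.

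One point in your sketch is genuinely wrong and would derail you if pursued literally: you describe the ``main technical obstacle'' as showing that \emph{cycles cannot exist} in $\mathcal{P}_K(\mathcal{Y})$. The projection complex is a quasi-tree, not a tree, and it typically contains many cycles (for a finite collection with all mutual projections below threshold it is a complete graph). What one actually proves is Manning's bottleneck property: every path from $A$ to $B$ must pass uniformly close to each vertex $C$ with $d_C(A,B)>K'$, which is exactly the ``paths must traverse the ordered obstructors'' statement you already formulated correctly in the preceding sentence. You should discard the no-cycles induction and route the quasi-tree verification entirely through the bottleneck criterion. A second, smaller issue: choosing single points in $\pi_A(B)$ and $\pi_B(A)$ to attach edges can break equivariance unless the choices are made equivariantly; the standard fix is to attach edges between all pairs of points of the two projection sets (which are uniformly bounded, so this changes nothing metrically).
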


For a non-separating sphere $S\subset M$ let ${\cal Y}(S)$ be the 
following graph. The set of vertices of ${\cal Y}(S)$ 
is the set ${\cal N\cal P}(S)$ of non-peripheral spheres in 
$M-S$. Two such spheres are connected by an edge of length one if 
their projections into the sphere graph ${\cal S\cal G}_S$ of the manifold 
obtained from $M-S$ by capping off the boundary are of distance at most one.
With this definition, the graph ${\cal Y}(S)$ is a geodesic metric graph which 
is $2$-quasi-isometric to the graph ${\cal S\cal G}_S$ and hence it is 
$\delta$-hyperbolic for a constant $\delta >0$ not depending on $S$.
The group ${\rm Out}(F_n)$ acts on the collection 
${\cal Y}=\{{\cal Y}(S)\mid S\}$ by isometries. 

For  
$S$ let $p_S:{\cal S\cal G}_n-{\cal P}(S)\to {\cal Y}(S)$ be the submanifold projection defined in 
Section \ref{subfactor}. Note that in contrast to the construction in Section \ref{subfactor}, the 
target of the map $p_S$ equals the set ${\cal N\cal P}(S)$ equipped with a metric inherited from 
${\cal S\cal G}_S$. Thus it makes sense to project the image into the complement of other spheres which may
intersect $S$. 
If $A$ is a non-separating sphere and if 
$S$ is contained in $M-A$, then $p_S$ is not defined on all of 
${\cal N\cal P}(A)={\cal Y}(A)$, but the only 
exceptions are points in ${\cal P}(S)$. 
Extend the definition of $p_S$ to ${\cal Y}(A)$ by putting
\[p_S({\cal P}(S)\cap {\cal N\cal P}(A)) =p_S(A).\] Note that this 
should be viewed as an extension of 
$p_S$ to all of ${\cal Y}(A)$. This extension depends on $A$, but the collection of these 
extension is equivariant with respect to the action of ${\rm Out}(F_n)$.

\begin{proposition}\label{projection}
The collection $({\cal Y}(S),p_S)$ satisfies the conditions in Theorem \ref{BBF}.
\end{proposition}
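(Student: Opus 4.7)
The plan is to verify conditions (1) and (2) of Theorem \ref{BBF} with constant $K = q$, where $q$ is the constant provided by Corollary \ref{bigprojection}. Here $\pi_A(B) := p_A(B)$ is the submanifold projection of Section \ref{subfactor}; this is defined and of uniformly bounded diameter in ${\cal Y}(A)$ for any two distinct non-separating spheres $A, B$, since $B\notin{\cal P}(A)$ (every sphere in ${\cal P}(A)\setminus\{A\}$ is separating, while $B$ is non-separating). In particular, the extended value of $p_A$ on peripheral spheres never enters the computation of $d_A(B, C)$.

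The key fact I will use is the following: whenever $A, B, C$ are distinct non-separating spheres and $d_A(B, C) > q$, Lemma \ref{nonseparating} provides a geodesic in ${\cal S\cal G}_n$ from $B$ to $C$ consisting of non-separating spheres, and Corollary \ref{bigprojection} forces any such geodesic to pass through $A$. This is the essential Behrstock-style obstruction, and it is essentially a repackaging of Theorem \ref{thm:bounded-geodesic-projection}.

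For condition (1), I assume $d_A(B, C) > q$ and show $d_B(A, C) \leq q$ and $d_C(A, B) \leq q$. Fix a geodesic $\gamma = (\gamma_0 = B, \dots, \gamma_k = A, \dots, \gamma_m = C)$ in non-separating spheres with $A$ appearing at an intermediate vertex, as furnished by the key fact. If $d_B(A, C) > q$ held, the sub-geodesic $(\gamma_k, \dots, \gamma_m)$ from $A$ to $C$ would also have to contain $B$; but $B = \gamma_0$ appears strictly before $\gamma_k$ in $\gamma$, so $B$ would occur twice on $\gamma$, contradicting that $\gamma$ is a geodesic. Symmetrically, if $d_C(A, B) > q$ then $C = \gamma_m$ would have to appear on the initial sub-geodesic $(\gamma_0, \dots, \gamma_k)$ (viewed as a geodesic from $B$ to $A$), again contradicting geodesy. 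Thus at most one of the three numbers exceeds $q$.

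For condition (2), I fix distinct non-separating spheres $A, B$ and one geodesic $\gamma$ from $A$ to $B$ consisting of non-separating spheres. Any non-separating sphere $C$ with $d_C(A, B) > q$ must be a vertex of $\gamma$ by the key fact, so only finitely many such $C$ exist. The substantive geometric content of the proposition is already packaged into Corollary \ref{bigprojection} and, through it, into Theorem \ref{thm:bounded-geodesic-projection}; given those, the combinatorics above is a direct adaptation of the classical Behrstock manipulation for curve graphs, and I do not anticipate any significant obstacle beyond matching the notation between the ambient sphere graph and the subprojection graphs ${\cal Y}(A)$, and checking that the non-separating hypothesis survives all the uses of Lemma \ref{nonseparating}.
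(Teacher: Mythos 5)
Your proposal is correct in substance and rests on the same two pillars as the paper's proof: Lemma \ref{nonseparating} (geodesics of non-separating spheres) and Corollary \ref{bigprojection} (which packages Theorem \ref{thm:bounded-geodesic-projection}). Two points of genuine divergence are worth recording. First, you set $\pi_A(B):=p_A(B)$, the projection of the single sphere $B$, whereas the paper takes $\pi_{{\cal Y}(A)}({\cal Y}(B))=p_A({\cal Y}(B))$, the image of the \emph{entire} space ${\cal Y}(B)$; the first half of the paper's proof is precisely the two-case argument (according to whether $d_{\cal S\cal G}(B,A)\geq 2$ or $=1$) that this image has uniformly bounded diameter, i.e.\ that the two choices are coarsely equivalent. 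Your choice is formally admissible for Theorem \ref{BBF} as stated and suffices for the later applications, but it silently discards the one nontrivial boundedness statement the paper actually proves, and it is the reading suggested by the notation $({\cal Y}(S),p_S)$; you should either prove that coarse equivalence or state explicitly that you are verifying the axioms for a different (coarsely equal) projection. Second, for axiom (1) you argue by a symmetric contradiction --- the sub-geodesic from $A$ to $C$ would have to revisit $B$ --- while the paper applies the bounded geodesic image theorem once more to the sub-geodesic beyond $A$ and bounds $d_B(A,C)$ directly; both are valid, and yours is the cleaner Behrstock-style formulation. Finally, a constant-chasing caveat: $d_A(B,C)$ is the \emph{diameter} of $\pi_A(B)\cup\pi_A(C)$, so $d_A(B,C)>q$ only yields that the distance between the two projection sets exceeds $q-2c$, where $c$ bounds the diameter of an individual projection; hence the threshold in Corollary \ref{bigprojection} is met only if you take $K=q+2c$ (the paper hedges similarly by using $2q$), not $K=q$ on the nose.
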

\begin{proof}
Let $B$ be a non-separating sphere different from $S$. 
We begin with showing that the diameter of the set
$p_S({\cal Y}(B))\subset {\cal Y}(S)$ is uniformly bounded, independent of $B$ and $S$.

To this end we distinguish two cases. In the first case we have
$d_{\cal S\cal G}(B,S)\geq 2$.
Then $B$ intersects $S$, furthermore for every 
$C\in {\cal N\cal P}(B)-{\cal P}(S)$, the projections
$p_S(B),p_S(C)$ contain components which are disjoint and hence 
whose distance in ${\cal Y}(S)$ equal one. 
By the definition of $p_S$, this implies that 
$p_S({\cal N\cal P}(B))$ is contained in a uniformly bounded neighborhood of 
$p_S(B)$.

If $d_{\cal S\cal G}(B,S)=1$ then $B\in {\cal N\cal P}(S)$ since $B$ is non-separating.
Then for any $C\in {\cal N\cal P}(B)-{\cal P}(S)$, the projection $p_S(C)$ is disjoint 
from $B$. Once again, by the definition of the projection $p_S$, we conclude that
$p_S({\cal N\cal P}(B))$ is contained in a uniformly bounded neighborhood of $B$.
This completes the proof  that the diameters of the sets  $p_S({\cal Y}(B))$ $(B\not=S)$
are bounded from above by a constant not depending on $B,S$.

We next verify property (1) in Theorem \ref{BBF}. Thus 
let $A,B,C$ be pairwise distinct non-separating spheres and 
suppose that $d_A(B,C)>2q$ where $q>0$ is as in Theorem \ref{thm:bounded-geodesic-projection}. 
Choose a geodesic $\gamma$ connecting $B$ to $C$ consisting of non-separating spheres. 
By Corollary~\ref{bigprojection}, 
the geodesic $\gamma$ has to pass through $A$. Let $i\geq 1$ be such that
$\gamma(i)=A$. Then Theorem \ref{thm:bounded-geodesic-projection} shows 
that $d_B(C,\gamma(i+1))\leq q$. As $A$ and $\gamma(i+1)$ are disjoint, the distance
between $p_B(A),p_B(\gamma(i+1))$ is uniformly bounded and hence the same holds true
for $d_B(A,C)$.

As the roles of $B,C$ can be
exchanged, this shows that condition (1) in Theorem \ref{BBF} is fulfilled.

Property~(2) follows immediately from Corollary~\ref{bigprojection}: the only spheres $C$ so that the projection $d_C(A,B)$ is large 
appear along a (fixed) geodesic consisting only of nonseparating spheres, which has finite length.
\end{proof}

As a fairly immediate consequence, we obtain a 
more precise version of the main result of \cite{BF14b} in rank 3.

\begin{corollary}\label{rank3}
The group ${\rm Out}(F_3)$ admits an isometric action on a product 
$Y=Y_1\times Y_2$ of two hyperbolic metric spaces so that every exponentially growing 
automorphism has positive translation length.
\end{corollary}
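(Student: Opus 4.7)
My plan is to take $Y_1$ to be the hyperbolic space produced by the Bestvina--Bromberg--Fujiwara construction (Theorem \ref{BBF}) applied to the collection $\{{\cal Y}(S)\}_S$ with submanifold projections $p_S$, whose required axioms are verified in Proposition \ref{projection}. For the second factor I would take $Y_2={\cal F\cal F}(F_3)$, the free factor graph, which is hyperbolic and carries a natural isometric ${\rm Out}(F_3)$-action; the diagonal action on $Y=Y_1\times Y_2$ is then by isometries, and translation length on $Y$ dominates the translation length on each factor.

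To verify positive translation length for every exponentially growing $\phi\in {\rm Out}(F_3)$, I would split into cases based on the free factor structure $\phi$ preserves. If $\phi$ is fully irreducible, then by the theorem of Bestvina--Feighn it acts loxodromically on ${\cal F\cal F}(F_3)=Y_2$, providing positive translation length on $Y$. Otherwise $\phi$ preserves the conjugacy class of a proper free factor. Since proper free factors of $F_3$ have rank $1$ or $2$, after replacing $\phi$ by a power one may assume that $\phi$ preserves the conjugacy class of a rank $2$ free factor $A$; in the rank $1$ case this reduction rests on the relative train track machinery of Bestvina--Handel, which forces an exponentially growing element with a periodic rank $1$ factor to have a periodic corank $1$ extension.

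A rank $2$ free factor corresponds to a conjugacy class of non-separating sphere $\sigma_0\subset M$, and some power $\phi^k$ fixes $\sigma_0$ as an isotopy class. The induced action of $\phi^k$ on ${\cal Y}(\sigma_0)$, which is quasi-isometric to the sphere graph of $\sharp_2 S^1\times S^2$ and hence, by Example \ref{fareygraph}, to the Farey graph, corresponds to an action of an element of ${\rm Out}(F_2)={\rm GL}(2,\mathbb{Z})$. Exponential growth of $\phi$ forces this element to be hyperbolic, and hyperbolic elements of ${\rm GL}(2,\mathbb{Z})$ are classically loxodromic on the Farey graph. Since the Bestvina--Bromberg--Fujiwara construction embeds ${\cal Y}(\sigma_0)$ as a convex ${\rm Stab}(\sigma_0)$-invariant subspace of $Y_1$, loxodromic behaviour of $\phi^k$ on ${\cal Y}(\sigma_0)$ transfers to positive translation length on $Y_1$, and hence of $\phi$ on $Y$.

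The main obstacle I anticipate is the rank $1$ case: ensuring that an exponentially growing element of ${\rm Out}(F_3)$ with only a rank $1$ periodic free factor must in fact stabilize a rank $2$ free factor (up to a power). This is the structural heart of the argument, relying on relative train track theory rather than the projection complex formalism developed in this article. The subsequent transfer of loxodromicity from ${\cal Y}(\sigma_0)$ to the ambient space $Y_1$ is a standard feature of the projection complexes of \cite{BBF15}, as is the fact that this transfer is equivariant under ${\rm Stab}(\sigma_0)$.
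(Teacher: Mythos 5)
Your overall architecture (one factor built by Bestvina--Bromberg--Fujiwara from the spaces ${\cal Y}(S)$ via Proposition \ref{projection}, one ``global'' hyperbolic factor) matches the paper, but your choice of the global factor and your case division create a genuine gap. The paper takes the second factor to be the \emph{free splitting} graph (the sphere graph of $M_3$), not the free factor graph, and the dichotomy it invokes is the one from \cite{HM19}: for an exponentially growing $\phi$, some power $\phi^j$ either acts loxodromically on the free splitting complex, or preserves a corank one free factor $A$ \emph{and acts with positive translation length on the free splitting complex of $A$}. Your dichotomy --- fully irreducible (hence loxodromic on ${\cal F\cal F}(F_3)$) versus ``preserves a proper free factor, whose restriction exponential growth forces to be hyperbolic in ${\rm GL}(2,\mathbb{Z})$'' --- does not cover all cases. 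Loxodromicity on ${\cal F\cal F}(F_n)$ characterizes exactly the fully irreducibles, which is strictly weaker than loxodromicity on the free splitting graph; and the claim that exponential growth of $\phi$ forces its restriction to an invariant rank $2$ factor to be exponentially growing is false. An automorphism such as $a_1\mapsto a_1$, $a_2\mapsto a_2$, $a_3\mapsto a_3a_2a_3$ is exponentially growing and preserves $\langle a_1,a_2\rangle$, but restricts to it as the identity; more to the point, the paper's own Example \ref{ex:rank3} exhibits elements that are loxodromic on the free splitting graph yet fix a free factor, and nothing in your argument guarantees that such an element is detected by either of your two factors.

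The gap you yourself flag (the rank $1$ case) is actually the easy part: the paper disposes of it by noting that a corank $2$ free factor of $F_3$ is infinite cyclic and so admits no exponentially growing automorphisms, which is why the \cite{HM19} dichotomy lands on a \emph{corank one} factor. The real missing ingredient is the \cite{HM19} statement itself, which simultaneously (a) upgrades your first case from the free factor graph to the free splitting graph and (b) guarantees, in the reducible case, not merely an invariant rank $2$ factor but positive translation length of the restricted action on that factor's free splitting complex (the Farey graph), which is exactly what the BBF factor needs. Replacing $Y_2={\cal F\cal F}(F_3)$ by the free splitting graph and citing this dichotomy would repair the argument; the remainder of your proposal (convex embedding of ${\cal Y}(\sigma_0)$ in the BBF space, transfer of loxodromicity, equivariance) is in line with the paper.
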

\begin{proof}
By Theorem \ref{BBF} and Proposition \ref{projection}, the group ${\rm Out}(F_3)$ admits an isometric
action on $Y=Y_1\times Y_2$ where $Y_1$ is the free splitting complex or, equivalently, the sphere graph of $M=M_3$,
and where $Y_2$ is a hyperbolic space containing for each non-separating sphere $S$ the graph
${\cal Y}(S)={\cal S\cal G}_2$ as a convex isometrically embedded subspace. 

A non-separating sphere $S$ in the manifold $M$ corresponds precisely to the conjugacy class of 
a corank one free factor, consisting of homotopy classes of loops based at a point
$p\in M-S$ which do not intersect $S$. As a consequence, any element of 
${\rm Out}(F_3)$ which preserves such a corank one free factor, defined 
by the sphere $S$, and acts as an exponentially growing 
automorphism on it acts with positive translation length on the graph ${\cal Y}(S)$, which 
is uniformly quasi-isometric to the Farey graph. Then such an element acts with positive 
translation length on $Y_2$ and hence on $Y$.
We refer to Section \ref{sec:spontaneous}
for a detailed discussion. 

On the other hand, by \cite{HM19}, if $\phi$ is an exponentially growing automorphism of $F_3$
then there exists a number $j\geq 1$ such that either $\phi^j$ acts with positive translation length
on the sphere graph $Y_1$ of $M$, or $\phi^j$ preserves a corank one free factor $A$ and acts with 
positive translation length on the free splitting complex of $A$. Note that 
the conclusion on the corank stems from the fact that a corank 2 free factor of $F_3$ is infinite cyclic and hence
does not admit any exponentially growing automorphisms. Together this yields the proof of the 
corollary.
\end{proof}

The above construction  can be interpreted in the following way.
Let $n\geq 3$ and let ${\cal P\cal G}_n$ be the graph whose vertices are 
ordered pairs $(S_1,S_2)$ of disjoint non-separating spheres. 
Two such pairs 
$(S_1,S_2)$ and $(S_1^\prime,S_2^\prime)$ are connected by an edge of length
one if either $S_1=S_1^\prime$ and the second spheres 
$S_2,S_2^\prime$ are connected
by an edge in the graph ${\cal Y}(S_1)$, or if $(S_1^\prime,S_2^\prime)=(S_2,S_1)$.
Note that in contrast to similar constructions for graphs of curves or graphs of disks
(see for example \cite{H16}), the spheres
$(S_1,S_2)$ and $(S_1^\prime,S_2^\prime)$ may be connected
by an edge although they can not be realized disjointly. The group 
${\rm Out}(F_n)$ acts on the graph 
${\cal P\cal G}_n$ as a group of simplicial automorphisms. 

Since two spheres in the first factor of the points in ${\cal P\cal G}_n$ only are exchanged
if they are disjoint, the first factor projection 
$\Pi_1:{\cal P\cal G}_n\to {\cal S\cal G}_n$ is an 
${\rm Out}(F_n)$-equivariant one-Lipschitz projection onto
the 1-dense convex subgraph of non-separating spheres. Note that ${\cal P\cal G}_n$ is only 
defined for $n\geq 3$. 

\begin{theo}
The graph ${\cal P\cal G}_n$ of non-separating pairs is a hyperbolic 
${\rm Out}(F_n)$-graph.
\end{theo}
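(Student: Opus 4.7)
The plan is to identify $\mathcal{P}\mathcal{G}_n$ coarsely with the hyperbolic space produced by the BBF construction. By Theorem~\ref{BBF} applied via Proposition~\ref{projection} to the collection $\{{\cal Y}(S) : S \text{ a non-separating sphere}\}$ with projections $p_S$, we obtain a hyperbolic ${\rm Out}(F_n)$-space $Y$ containing each ${\cal Y}(S)$ as a convex isometrically embedded subspace, with nearest-point projection $Y\to {\cal Y}(S)$ coarsely equal to $p_S$. I would define an ${\rm Out}(F_n)$-equivariant map $\Phi: {\cal P\cal G}_n \to Y$ by $\Phi(S_1, S_2) = S_2 \in {\cal Y}(S_1) \subset Y$ and prove that $\Phi$ is a quasi-isometry; hyperbolicity of ${\cal P\cal G}_n$ is then inherited from $Y$.

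For the coarse Lipschitz estimate, a type-(a) edge $(S_1, S_2) \sim (S_1, S_2')$ is mapped into the isometrically embedded ${\cal Y}(S_1) \subset Y$, where by definition $d_{{\cal Y}(S_1)}(S_2, S_2') \leq 1$. For a swap edge $(S_1, S_2) \sim (S_2, S_1)$ the images $S_2 \in {\cal Y}(S_1)$ and $S_1 \in {\cal Y}(S_2)$ are, because $S_1, S_2$ are disjoint non-separating spheres, exactly the vertices realizing $p_{S_1}(S_2) = S_2$ and $p_{S_2}(S_1) = S_1$. By standard properties of the BBF construction, such pairs of mutually closest projection points between distinct members of the collection lie at uniformly bounded $Y$-distance.

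For the quasi-isometric embedding and coarse density, I would argue as follows. Any point of $Y$ lies within bounded distance of some ${\cal Y}(S)$ with $S$ non-separating, and non-separating spheres are coarsely dense in each ${\cal Y}(S)$ by the analog in the capped complement of Lemma~\ref{nonseparating}, yielding coarse surjectivity. For the lower bound on $d_{{\cal P\cal G}_n}$, given vertices $(S_1, S_2)$ and $(S_1', S_2')$, I would use the BBF distance formula to select a sequence of non-separating spheres $S_1 = T_0, T_1, \ldots, T_k = S_1'$ with consecutive pairs disjoint in $M$, such that a $Y$-geodesic between the two images traverses bounded neighborhoods of the subspaces ${\cal Y}(T_i)$ in order, entering near $p_{T_i}(T_{i-1})$ and exiting near $p_{T_i}(T_{i+1})$. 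This decomposition lifts to a ${\cal P\cal G}_n$-path built by concatenating within-slice type-(a) segments of length roughly $d_{{\cal Y}(T_i)}(p_{T_i}(T_{i-1}), p_{T_i}(T_{i+1}))$ with type-(b) swap edges, of total length comparable to $d_Y(\Phi(S_1, S_2), \Phi(S_1', S_2'))$.

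The main obstacle will be the construction in the last paragraph: one must arrange the intermediate spheres $T_i$ so that each pair $T_i, T_{i+1}$ is disjoint (so that the swap edge is legal in ${\cal P\cal G}_n$) and so that $p_{T_i}(T_{i+1})$ is coarsely realized by the vertex $T_{i+1}$ itself. Separating intermediate choices can be replaced by nearby non-separating ones using Lemma~\ref{nonseparating}; forcing successive choices to be disjoint relies on the bounded geodesic image property (Theorem~\ref{thm:bounded-geodesic-projection}) together with Corollary~\ref{quasigeobounded}, which ensure that any transition between ${\cal Y}(T_i)$ and ${\cal Y}(T_{i+1})$ producing large projection must pass through an actual sphere on a non-separating geodesic in ${\cal S\cal G}_n$, providing the disjoint candidate for $T_{i+1}$. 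Once $\Phi$ is verified to be a quasi-isometry, hyperbolicity of ${\cal P\cal G}_n$ follows from that of $Y$.
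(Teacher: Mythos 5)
Your map $\Phi$ is coarsely Lipschitz and coarsely onto, but it is not a quasi-isometric embedding, and this is not a fixable technicality: ${\cal P\cal G}_n$ is not quasi-isometric to the BBF space $Y$. The point is that every change of first coordinate along a path in ${\cal P\cal G}_n$ uses a swap edge between two \emph{disjoint} spheres, so the sequence of first coordinates is an edge path in the sphere graph and hence
\[
d_{{\cal P\cal G}_n}\bigl((S_1,S_2),(S_1',S_2')\bigr)\;\geq\; d_{{\cal S\cal G}}(S_1,S_1').
\]
By contrast, the BBF distance in $Y$ between points of ${\cal Y}(S_1)$ and ${\cal Y}(S_1')$ is controlled by the sum of the large projections $\sum_C [d_C(S_1,S_1')]_K$ together with the distance in the projection complex, and it contains no term comparable to $d_{{\cal S\cal G}}(S_1,S_1')$: if $S_1,S_1'$ are far apart in the sphere graph but all submanifold projections $d_C(S_1,S_1')$ are uniformly bounded (which happens, e.g., along orbits of loxodromics of the free splitting graph with uniformly bounded projections -- this is exactly why Corollary \ref{rank3} needs the \emph{product} $Y_1\times Y_2$ rather than $Y_2$ alone), then $\Phi(S_1,S_2)$ and $\Phi(S_1',S_2')$ are uniformly close in $Y$ while the ${\cal P\cal G}_n$-distance is arbitrarily large. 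Your own path construction exhibits the problem: to make consecutive $T_i$ disjoint you are forced to take all vertices of a sphere-graph geodesic, so the number of swap edges is $d_{{\cal N\cal S\cal G}}(S_1,S_1')$, which you never bound by $d_Y$ and cannot. One can also see the mismatch at the level of electrifications: coning off the slices $H(S)$ in ${\cal P\cal G}_n$ yields the (non-separating) sphere graph, whereas coning off the pieces ${\cal Y}(S)$ in $Y$ yields the BBF projection complex, which is a quasi-tree.

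The paper's proof avoids $Y$ entirely. It shows that the slices $H(S)=\Pi_1^{-1}(S)$ are uniformly hyperbolic, that distinct slices have uniformly bounded coarse intersection, that the electrification of ${\cal P\cal G}_n$ along the $H(S)$ is $2$-quasi-isometric to the graph of non-separating spheres and hence hyperbolic, and that the bounded penetration property holds for efficient quasi-geodesics in the electrification (this is where Corollary \ref{quasigeobounded} and the bounded geodesic image theorem enter). Hyperbolicity of ${\cal P\cal G}_n$ then follows from the combination theorem (Theorem 1 of \cite{H16}) for hyperbolic relatively hyperbolic graphs. If you want to salvage your strategy, you would have to replace $Y$ by a space that also records the sphere-graph distance between the base spheres -- which is in effect what the relative hyperbolicity argument does.
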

\begin{proof}
Given what we achieved so far, the proof is fairly standard. For each non-separating 
sphere $S\subset M$ consider the subgraph 
\[\Pi_1^{-1}(S)=\{(S,S^\prime)\mid S^\prime\}=H(S)\subset {\cal P\cal G}_n\] 
of pairs
with one component equal to $S$. This graph is $2$ -quasi-isometric to ${\cal Y}(S)$ and hence it is 
$\delta$-hyperbolic for a number $\delta >0$ not depending on $S$. 

For $S\not=S^\prime$, the intersection $H(S)\cap H(S^\prime)$ can be viewed as a graph 
of non-separating spheres which are 
disjoint from both $S,S^\prime$. Thus the diameter of this intersection in both 
$H(S),H(S^\prime)$ is uniformly bounded.

Let ${\cal E\cal G}$ be the electrification of ${\cal P\cal G}_n$ with respect to the family ${\cal H}$ of 
subgraphs $H(S)$. This electrification is the graph obtained from ${\cal P\cal G}_n$ by
adding a vertex $v_S$ for each of the graphs $H(S)$ and connecting $v_S$ to each vertex in 
$H(S)$ by an edge. 
By construction, this electrification is two-quasi-isometric to the graph of non-separating 
spheres and hence it is hyperbolic. In particular, any $L$-quasi-geodesic in 
${\cal E\cal G}$ defines a $2L$-quasi-geodesic in ${\cal S\cal G}_n$. 

The \emph{bounded penetration property} in this context states 
that for every $L>1$ there exists a number $p(L)>0$ with the following property
\cite{H16}. Call an $L$-quasi-geodesic edge path in ${\cal E\cal G}$ \emph{efficient} if 
for every non-separating sphere $S$ we have $\gamma(k)=v_S$ for at most one $k$.
Let $\gamma\subset {\cal E\cal G}$ be an efficient $L$-quasi-geodesic and let
$S$ and $k$ be such that $\gamma(k)=v_S$. If the distance in $H(S)$ between 
$\gamma(k-1)$ and $\gamma(k+1)$ is at least $p(L)$ then every efficient 
$L$-quasi-geodesic $\gamma^\prime$ in ${\cal E\cal G}$ with the same endpoints as $\gamma$
passes through $v_S$. Moreover, if $\gamma^\prime(k^\prime)=v_S$ then the distance 
in $H(S)$ between $\gamma(k-1),\gamma^\prime(k^\prime-1)$ and
$\gamma(k+1),\gamma^\prime(k^\prime+1)$ is at most $p(L)$.

By Corollary \ref{quasigeobounded} and the fact that ${\cal E\cal G}$ is 
$2$-quasi-isometric to the graph of non-separating spheres,  
the bounded penetration property holds true for the subspaces $H(S)$. Thus 
it follows from Theorem 1 of \cite{H16} that ${\cal P\cal G}_n$ is hyperbolic.
\end{proof}

The graph ${\cal P\cal G}_n$ also has the following description. Its vertices are 
conjugacy classes of pairs $A_1>A_2$ of free factors, where $A_1$ is of corank $1$ and $A_2$ is of corank $2$.
There are two types of edges. The first type preserves $A_1$ and exchanges $A_2$ by a corank one free factor 
connected to $A_2$ by an edge in the free splitting graph of $A_1$. 
The second type preserves $A_2$ and replaces $A_1$ by a corank one free factor 
containing $A_2$ which is connected to 
$A_1$ by an edge in the free splitting graph of $F_n$.

Note that the group ${\rm Out}(F_n)$ naturally acts on ${\cal P\cal G}_n$ as a group of 
simplicial isometries.
Using this graph we can complete the proof of Theorem \ref{main}.

\begin{theo}\label{rank3sharp}
The group ${\rm Out}(F_3)$ admits an isometric action on a hyperbolic metric graph such that
every exponentially growing automorphism has positive translation length.
\end{theo}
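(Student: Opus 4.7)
The plan is to follow the strategy used in Corollary~\ref{rank3}, but now working directly inside the graph ${\cal P\cal G}_3$ in place of a product of hyperbolic spaces. The starting input is the dichotomy for exponentially growing elements of ${\rm Out}(F_3)$ from \cite{HM19}: for any such $\phi$ there is a power $\phi^j$ which either acts with positive translation length on the sphere graph ${\cal S\cal G}_3$, or preserves a corank-one free factor $A$ (equivalently, the conjugacy class of a non-separating sphere $S$) and acts on $A$ as an exponentially growing automorphism of $F_2$. Since translation length is homogeneous in powers, it suffices to handle these two cases.

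In the first case the argument is immediate: the projection $\Pi_1 \colon {\cal P\cal G}_3 \to {\cal S\cal G}_3$ is ${\rm Out}(F_3)$-equivariant and one-Lipschitz, and Lemma~\ref{nonseparating} ensures that its image (the non-separating sphere subgraph) is quasi-isometrically embedded in ${\cal S\cal G}_3$. Hence positive translation length of $\phi^j$ on ${\cal S\cal G}_3$ pulls back to positive translation length on ${\cal P\cal G}_3$. In the second case, $\phi^j$ fixes the sphere $S$ and therefore preserves the subgraph $H(S) = \Pi_1^{-1}(S)$, which by construction is two-quasi-isometric to ${\cal Y}(S)$. For $n=3$ the space ${\cal Y}(S)$ is the sphere graph of $M_2$, which by Example~\ref{fareygraph} is quasi-isometric to the Farey graph. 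The induced action of $\phi^j$ on $A \cong F_2$ is exponentially growing, hence corresponds to a hyperbolic element of ${\rm GL}(2,\mathbb{Z})$, which acts on the Farey graph with positive translation length (as recalled in Section~\ref{sec:spontaneous}). Thus $\phi^j$ acts on $H(S)$ with positive translation length.

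The main obstacle is to promote positive translation length on the fiber $H(S)$ to positive translation length on the ambient graph ${\cal P\cal G}_3$, i.e.\ to show that the orbit of a point of $H(S)$ under $\phi^j$ does not get drastically shortened when distances are measured in ${\cal P\cal G}_3$ rather than in $H(S)$. This is where the bounded penetration property established in the proof that ${\cal P\cal G}_n$ is hyperbolic enters: that argument realizes $H(S)$ as a peripheral subgraph whose electrification yields the graph of non-separating spheres, and in this relatively hyperbolic setup the peripheral subgraphs are quasi-isometrically embedded. Concretely, if a geodesic in ${\cal P\cal G}_3$ from $(S,S_1)$ to $(S,S_2)$ leaves $H(S)$ and returns, Corollary~\ref{quasigeobounded} combined with bounded penetration bounds how much such an excursion can shorten the distance compared to the intrinsic distance in $H(S)$. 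Consequently the inclusion $H(S) \hookrightarrow {\cal P\cal G}_3$ is a quasi-isometric embedding, and positive translation length of $\phi^j$ on $H(S)$ upgrades to positive translation length on ${\cal P\cal G}_3$. Combining the two cases and using that $\phi$ has positive translation length whenever $\phi^j$ does, this completes the proof.
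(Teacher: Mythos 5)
Your proposal is correct and follows essentially the same route as the paper: the \cite{HM19} dichotomy reduces to either loxodromic behaviour on the sphere graph (handled by the equivariant one-Lipschitz projection $\Pi_1$) or positive translation length on a fiber $H(S)$, which transfers to ${\cal P\cal G}_3$ because $H(S)$ is uniformly quasi-convex/quasi-isometrically embedded by the bounded penetration property and Theorem 1 of \cite{H16}. The paper's own proof is just a terser statement of exactly these points.
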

\begin{proof}
The proof is immediate from the proof of Corollary \ref{rank3} via noting that by 
Theorem 1 of \cite{H16} and the construction of the graph ${\cal P\cal G}_n$, for each 
non-separating sphere $S$ the subgraph $H(S)$ is uniformly quasi-convex and isometric
to the graph ${\cal Y}(S)$. Thus any exponentially growing automorphism of $F_3$ acts with 
positive translation length on ${\cal P\cal G}_3$. 
\end{proof}

\noindent
MATHEMATISCHES INSTITUT DER UNIVERSIT\"AT BONN\\
ENDENICHER ALLEE 60, D-53115 BONN\\
e-mail: ursula@math.uni-bonn.de

\noindent
MATHEMATISCHES INSTITUT DER LMU M\"UNCHEN\\
THERESIENSTRASSE 39, D-80333 M\"UNCHEN\\
e.mail: hensel@math.lmu.de


\end{document}